\newcommand{\R}{\mathbb{R}}
\newcommand{\N}{\mathbb{N}}
\renewcommand{\S}{{\mathbb{S}^{N-1}}}
\newcommand{\eps}{\varepsilon}
\newcommand{\Sym}{{\mathcal{S}}}
\newcommand{\tr}{\mathrm{Tr}\,}
\newtheorem{lem}{Lemma}
\newtheorem{prop}{Proposition}
\newtheorem{thm}{Theorem}
\newtheorem{cor}{Corollary}
\theoremstyle{remark}
\newtheorem{rem}{Remark}
\begin{document}

\title{Alexandroff-Bakelman-Pucci estimate and Harnack inequality for
  degenerate/singular fully non-linear elliptic equations}

\author{Cyril Imbert}

\date{\today}

\maketitle

%%%%%%%%%%%%%%%%%%%%%%%%%%%%%%%%%%%%%%%%%%%%%%%%%%%%%%%%%%%%%%%%%%%%%%%%%%%%%
\begin{quote} \footnotesize
  \noindent \textsc{Abstract.}  In this paper, we study fully
  non-linear elliptic equations in non-divergence form which can be
  degenerate or singular when ``the gradient is small''. Typical examples are
  either equations involving the $m$-Laplace operator or
  Bellman-Isaacs equations from stochastic control problems. We
  establish an Alexandroff-Bakelman-Pucci estimate and we prove a
  Harnack inequality for viscosity solutions of such non-linear
  elliptic equations.
\end{quote}
%%%%%%%%%%%%%%%%%%%%%%%%%%%%%%%%%%%%%%%%%%%%%%%%%%%%%%%%%%%%%%%%%%%%%%%%%%%%%%%
\vspace{5mm}

\noindent
\textbf{Keywords:} Degenerate fully non-linear elliptic equation,
singular fully non-linear elliptic equation, non-divergence form,
Alexandroff-Bakelman-Pucci estimate, weak Harnack inequality, local
maximum principle, Harnack inequality, H\"older regularity, viscosity
solutions

\medskip

\noindent 
\textbf{Mathematics Subject Classification:} 35B45,  15B65, 35J15, 49L25
\bigskip

\section{Introduction}

Following the original strategy of Krylov and Safonov \cite{ks1,ks2},
Delarue \cite{delarue} proved by probabilistic methods a Harnack
inequality for quasi-linear elliptic equations of the form
\begin{equation}\label{eq:quasilinear}
- \tr (A(x,u,Du)D^2 u ) + H (x,u,Du) = 0 \, , \quad x \in \Omega
\end{equation}
(where $\Omega$ is a domain of $\R^n$)
in the case where the $n\times n$ matrix $A(x,p)$ can
degenerate. Precisely, he assumes
\begin{eqnarray}\label{assum:delarue}
\Lambda^{-1} \lambda (p) I \le A(x,u,p) \le \Lambda \lambda (p) I  \\
\label{assum:delarue2}
H(x,u,p) \le \Lambda (1 + \lambda (p)) (1+|p|)
\end{eqnarray}
where $\Lambda \ge 1$, $\lambda: \R^n \to \R^+$ is continuous and such
that $\lambda (p) \ge \lambda_F$ if $|p|\ge M_F$. In
\eqref{assum:delarue}, $I$ denotes the identity matrix and the
inequalities are understood in the sense of the usual partial order on
the set of real symmetric matrices. The model example of
\eqref{eq:quasilinear} is the $m$-Laplace equation where $A(x,p) =
|p|^{m-2}$ for some $m>2$.  An important application of the Harnack
inequality is the derivation of a H\"older estimate for the solution
of \eqref{eq:quasilinear}.

In this paper, we generalize this result to the case of fully
non-linear elliptic equations in non-divergence form
\begin{equation}\label{eq:main}
F(x,u,Du, D^2 u ) = 0 \, , \quad x \in \Omega
\end{equation}
which can be either degenerate or singular. We do so by proving first
an Alexandroff-Bakelman-Pucci (ABP for short) estimate.  This is the first main
difference with \cite{delarue} and the first main contribution of this
paper. Important examples of \eqref{eq:main} which are out of the
scope of \cite{delarue} are Bellman-Isaacs equations appearing in the
context of stochastic control problems.  We also generalize and/or
recover results from \cite{dfq-cras,bd} where an ABP estimate and a
Harnack inequality respectively are obtained for
\begin{equation}\label{eq homog}
  F_0(Du,D^2u) + b(x) \cdot Du |Du|^\alpha + cu |u|^\alpha + f_0 (x)=0 \, , \quad x \in \Omega
\end{equation}
where $F$ is positively homogeneous of order $\alpha \in (-1,1)$ (see
Section~\ref{sec:comparison} for precise assumptions). If $\alpha \in
[0,1)$, the equation is degenerate. If $\alpha \in (-1,0]$, the
equation is singular. Even if this equation does not formally enter
into our general framework, we will explain how the results of
\cite{dfq-cras,bd} can be derived from ours.

\paragraph{Known results.}
Krylov and Safonov \cite{ks1,ks2} first proved a Harnack inequality
for second order elliptic equations in non-divergence form with
measurable coefficients. This result is often presented as the
counterpart of the De Giorgi and Nash estimates \cite{degiorgi,nash}
for divergence form equations.

As far as degenerate elliptic equations are concerned, De Giorgi and
Nash estimates were obtained for equations in divergence form and for
degeneracies of $p$-Laplace type.  See for instance \cite{serrin,lsu}.

Krylov and Safonov estimates were obtained by Caffarelli \cite{caff89}
for fully non-linear elliptic equations of the form $F(x,D^2u) =0$
(see also \cite{trudinger,gt}). As explained in \cite{cc95}, a
fondamental tool in this approach is the Alexandroff-Bakelman-Pucci
estimate. Many authors extended these results since then; see for
instance \cite{fok,ks,clv05,qs} and references therein.

To the best of our knowledge and as far as degenerate elliptic
equations in non-divergence form are concerned, the Krylov and Safonov
estimates obtained by Delarue \cite{delarue} are the first ones.

After this work was completed, Birindelli and Demengel \cite{bd}
obtained a Harnack inequality for degenerate elliptic equations of the
form \eqref{eq homog} with $\alpha \in [0,1)$ in dimension
$2$. Reading their interesting paper, we understood that we could
recover (and in fact extend) their results and deal with singular
equations. We will explain how to get the same estimate in any
dimension (see Section~\ref{sec:comparison}). Their work aims at
generalizing the results of D\'avila, Felmer and Quaas \cite{dfq-sing}
where the same elliptic equation is considered but with $\alpha \in
(-1,0]$. Hence, the equation is singular. We also mention that  an ABP
estimate is proved in \cite{dfq-cras} for degenerate and singular
equations.  We will explain that it can be derived from ours;
see Section~\ref{sec:comparison} where our results are compared with
the ones in \cite{bd,dfq-cras}.

\paragraph{Main results.}
Let us now describe a bit more precisely our main results.  
We use the techniques
developed by Caffarelli \cite{caff89} (see also \cite{cc95}) instead
of probability arguments to get, apart from the
Alexandroff-Bakelman-Pucci estimate, a weak Harnack inequality and a
local maximum principle. It is then easy to derive a Harnack
inequality and a H\"older estimate of a solution of \eqref{eq:main}.

First and foremost, we mention that, as in \cite{caff89,delarue}, we
use the notion of viscosity solution \cite{cil92} since the equation
is fully non-linear. We recall that if singular equations of the
form~\eqref{eq homog} are considered, the classical notion of
viscosity solutions must be adapted; see \cite{bd2}.

We next make precise the standing assumptions that the non-linearity $F$ must satisfy. 
Throughout the paper, $\Sym_n$ denotes the space of real symmetric $n \times n$ matrices
and $B_R$ denotes the open ball of radius $R\ge 0$.
\paragraph{Assumption (A).}
\begin{itemize}
\item
$F$ is \emph{continuous} on $\Omega \times \R \times \R^n \setminus B_{M_F} \times \Sym_n$ for
some $M_F\ge 0$;
\item $F$ is \textit{(degenerate) elliptic}, 
\textit{i.e.}  for all $x \in \Omega$, $r \in \R$, $p \in \R^n$ ($p \neq 0$ 
for singular equation) and $X,Y \in 
\Sym_n$, 
$$
X \le Y \Rightarrow F(x,r,p,Y) \le F(x,r,p,X) \, .
$$
\item
$F$ is \textit{proper} i.e. it is non-decreasing with respect to its $r$ variable. 
\end{itemize}
  Our first main result (Theorem~\ref{thm:abp}) is an ABP
estimate for lower semi-continuous super-solutions of \eqref{eq:main}
on a ball $B_d$ where $F$ is \textit{strictly elliptic for ``large
  gradients''}
\begin{equation} \label{strictell}
\left.\begin{array}{r}
X \ge 0 \\
|p| \ge M_F \\
F(x,r,p,X) \ge 0 
\end{array}\right\} \Rightarrow
 - \lambda_F \mathrm{tr} (X) + \sigma (x) |p| + g(x,u) \ge 0
\end{equation}
for some continuous functions $g$ and $\sigma$ and some constants $M_F
\ge 0$, $\lambda_F >0$.  This condition holds true if $F$ satisfies
\eqref{loclip2} but it is more general.  An ABP estimate permits us to
control $\sup_{B_d} u^-$ in terms of $M_\partial=\sup_{\partial B_d}
u^-$ and the $L^n$-norms of $g(x,M_\partial)$ and $\sigma$ appearing in
\eqref{strictell}. In order to get such an estimate, we use the
techniques from \cite{caff89}. As we already mentioned it
in \cite{convexe}, the ABP estimate that we are able to obtain differs
slightly from classical ones in the sense that we can prove it under
a weaker condition than \eqref{loclip2}; moreover, the
super-solution is only lower semi-continuous. We recall that 
this is an a priori estimate: structure conditions
ensuring the uniqueness of the solution are not required. 
We finally mention that when the equation is strictly elliptic
($M_F=0$), we recover the classical ABP estimate. 

Our second main result (Corollary~\ref{cor:hi}) is a Harnack inequality
for \eqref{eq:main}. This inequality is a consequence of a weak Harnack
inequality and a local maximum principle proved by generalizing in 
an appropriate way \eqref{assum:delarue} and \eqref{assum:delarue2}.
In view of \eqref{assum:delarue}, one can consider the quasilinear
equation~\eqref{eq:quasilinear} where $A$ and $H$ are replaced with 
$$
\tilde{A} (x,u,Du) = \frac1{\lambda(Du)}{A(x,u,Du)} \quad \text{ and }
\quad \tilde{H}(x,u,Du) = \frac1{\lambda(Du)}{H(x,u,Du)} \, .
$$  
Hence, the new quasi-linear equation is uniformly elliptic. However,
the first order term is, in this case, eventually singular and
\eqref{assum:delarue} can be seen as an assumption concerning the
first order term. In the case of the $m$-Laplace equation, $\lambda
(p)=|z|^{m-2}$ and $H$ has therefore a polynomial growth of order
$m-1$.  Assumptions~\eqref{assum:delarue},
\eqref{assum:delarue2} are replaced with
\begin{eqnarray}
\label{loclip2}
\left. \begin{array}{r} |p| \ge M_F \\
F(x,u,p,X) \ge 0 \end{array} \right\}
\Rightarrow  
 \mathcal{M}^+ (X) + \sigma (x)|p| + \gamma_F u + f(x) \ge 0 \, ,\\
\label{loclip3}
\left. \begin{array}{r} |p| \ge M_F \\ 
F(x,u,p,X) \le 0 \end{array} \right\}
\Rightarrow  \mathcal{M}^- (X) - \sigma (x)|p| + \gamma_F u - f(x) \le 0
\end{eqnarray}
where $\sigma,f: \overline{B} \to \R$ are continuous and $M_F$ and
$\gamma_F$ are non-negative constants. It is important to remark that
if $F$ satisfies \eqref{loclip2}, \eqref{loclip3}, then it can be
degenerate or singular and it can have a superlinear growth in $p$. 

An important consequence of the Harnack
inequality is the H\"older regularity of solutions of \eqref{eq:main} 
(see Theorem~\ref{cor:holder}). As far as the regularity of solutions of 
\eqref{eq:main} is concerned, we notice that by assuming \eqref{loclip2}
and \eqref{loclip3}, we cannot expect  more than Lipschitz
continuity. Indeed, by making such an assumption, 
we somehow forget about all small gradients and we cannot expect these 
small gradients to be regular. 
We also point out that it is easier to prove the uniqueness
of a H\"older continuous function than to prove a strong comparison
result between discontinuous viscosity sub- and super-solutions 
(which is the classical way to get uniqueness of viscosity solutions 
\cite{cil92}).
To finish with, we shed light on the fact that, as for
 the ABP estimate, we recover the Harnack inequality of \cite{caff89}
in the strictly elliptic case ($M_F=0$).

\paragraph{Extensions.}
We will explain how to deal with non-linearities, after redefining
them if necessary, growing quadratically with respect to the
gradient. Precisely, \eqref{loclip2} and \eqref{loclip3} are replaced
with
\begin{eqnarray}
\label{loclip2-quad}
\left.\begin{array}{r} |p| \ge M_F  \\ F(x,u,p,X) \ge 0 \end{array}\right\}
\Rightarrow  \mathcal{M}^+ (X) + \sigma (x)|p| + \sigma_2 |p|^2 + \gamma_F u + f(x) \ge 0 \, ,
%\end{multline}
%and
\\
%\begin{multline}
\label{loclip3-quad}
\left.\begin{array}{r} |p| \ge M_F \\ F(x,u,p,X) \le 0 \end{array}\right\}
\Rightarrow
 \mathcal{M}^- (X) - \sigma (x)|p| - \sigma_2 |p|^2 + \gamma_F u - f(x) \le 0  
%\end{multline}
\end{eqnarray}
where $\sigma,f: \overline{B} \to \R$ are continuous and $M_F, \sigma_2$ and
$\gamma_F$ are non-negative constants.
In this case, it is known \cite{trudinger,kt}
that it is not possible to get a weak Harnack inequality which does
not depend on the $L^\infty$-norm of the solution. See
Section~\ref{sec:extensions} for more details and comments.

As far as extensions of these results are concerned, we would like to
mention next that we could have used $L^p$-viscosity solutions
\cite{ccks} instead of classical continuous viscosity solutions in
order to be able to deal with discontinuous coefficients. We chose not
to do so in order to avoid technicalities but we think that this can
be done. We also mention that it is sometimes more difficult to get a
classical ABP estimate when using this notion of solution; for
instance in \cite{ks}, the ABP estimate does not involve the contact
set of the function.

We also mention that the parabolic case will be addressed in a future
work.

\paragraph{Additional comments.} Assumption~\eqref{strictell}
permits to take into account non-linearity growing linearly with
respect to the gradient. Such an assumption appears in \cite{trudinger2} 
where Trudin\-ger proved that 
strong solutions satisfy a weak Harnack inequality for such non-linearities
if $\sigma$ is sufficiently integrable. This result has been generalized to 
viscosity solutions since then; see for instance \cite{fokthesis,ksbis}.

We recall that it is possible to use the techniques introduced
in \cite{il90} in order to prove the H\"older regularity of viscosity
solutions much more easily.
But the estimate of the H\"older constant depends in this case on the
modulus of continuity of the coefficients of the equation.

\paragraph{Organization of the article.} The paper is organized as
follows.  In Section~\ref{sec:prelim}, we construct a barrier function
that will be used when proving the Harnack inequality. We also recall
the definition of two Pucci operators. In Section~\ref{sec:abp}, we
establish an ABP estimate. In Section~\ref{sec:harnack}, we
successively prove a weak Harnack inequality and a local maximum
principle. We also derive from these two results a Harnack inequality.
In Section~\ref{sec:extensions}, we explain how to deal with elliptic
equations with quadratic dependence on the gradient.  As applications
of our results, we generalize and/or recover some results from
\cite{bd,dfq-cras} in Section~\ref{sec:comparison}.
Section~\ref{sec:proofs} is dedicated to proofs of our main results.
Appendix~\ref{app:additional} is added for the sake of completeness of
proofs and for the reader's convenience. We give in
Appendix~\ref{app:additional} detailed proofs of results which can be
easily derived from classical ones.

 \paragraph{Notation.} A ball of radius $r$ centered at $x$ is
denoted by $B(x,r)$ or $B_r (x)$. If $x=0$, we simply write $B_r$. 
$\omega_n$ denotes the volume of the unit ball. 
The hypercube $\Pi_{i=1}^n (x_i -r/2,x_i+r/2)$ is denoted by $Q_r (x)$. 
If $x=0$, we simply write $Q_r$. 

Given a vector $a \neq 0$, $\hat a$ denotes $a / |a|$.  $I$ denotes
the identity matrix. The set of real symmetric $n \times n$ matrices
is denoted by $\mathcal{S}_n$.

A constant is universal if it only depends on $n$ (dimension), $q$
(constant greater than $n$ fixed in all the paper), $\lambda_F$ and
$\Lambda_F$ (ellipticity constants).

Given a lower semi-continuous function $u$, $D^{2,-}u (x)$ (resp. $\bar{D}^{2,-} u (x)$) 
denotes the set of all subjets (resp. limiting subjets) of $u$ at point $x$. 
See \cite{cil92} for definitions. 

\paragraph{Acknowledgments.} We are very grateful to Delarue for
bringing our attention to this problem and for the fruitful
discussions we had together.  We also would like to thank
Capuzzo-Dolcetta, D\'avila, Felmer and Quaas for sending us their
preprints and for their interest in our work and useful comments.  In
particular, the important remarks sent to us by D\'avila, Felmer and
Quaas permit us to improve the first version of this paper and to
improve the results of Birindelli and Demengel.

\section{Preliminaries}
\label{sec:prelim}

\paragraph{Pucci operators.}

We recall the definition of two important second order non-linear elliptic 
operators. 
For all $M \in \mathcal{S}_n$, we define
\begin{eqnarray*}
\mathcal{M}^+ (M) = \sup_{A \in \mathcal{A}_{\lambda_F, \Lambda_F}} ( - \tr (A M)) \\
\mathcal{M}^- (M) = \inf_{A \in \mathcal{A}_{\lambda_F, \Lambda_F}} ( - \tr (A M))
\end{eqnarray*}
where $\mathcal{A}_{\lambda_F , \Lambda_F} = \{ A \in \mathcal{S}_n: \lambda_F I
\le A  \le \Lambda_F I \}$. We will refer to these operators 
as the \emph{maximal} and \emph{minimal Pucci operators}. 
Remark that $\mathcal{M}^+$
is subadditive. More precisely, it is the support function of the set 
$-\mathcal{A}_{\lambda_F,\Lambda_F}$. We will also use the fact that $\mathcal{M}^- (M)
= - \mathcal{M}^+ (-M)$.

\paragraph{Construction of a barrier.}

We now construct a barrier that will be used when proving the (weak) 
Harnack inequality. 
%-----------------------------------------------------------------
\begin{lem}[Construction of a barrier]    \label{lem:barrier}
Given a constant $\eps_0>0$, there exists a smooth function 
$\varphi: \R^n \to \R$, a universal constant $M_{\mathrm{B}}>1$ and constants  $C_{\mathrm{B}} >0$, 
$R,r>0$ (with $R\ge  (3r/2)\sqrt{n}$)
depending only on the dimension $n$, $\lambda_F$, $\Lambda_F$ and $\eps_0$, 
such that 
\begin{eqnarray}
\varphi \ge 0 &&\text{ in } \R^n \setminus B_R
\label{barrier:lb infinity}\\ 
\varphi \le -2 &&\text{ in } Q_{3r} \label{barrier:local ub}\\
\varphi \ge -M_{\mathrm{B}} &&\text{ in } \R^n \label{barrier:lb}\\ 
\nonumber &&\\\label{barrier:gradient}
| D \varphi | \le \eps_0 && \text{ in } \R^n \\ 
\mathcal{M}^- \varphi  + C_{\mathrm{B}} \xi \ge 0 &&\text{ in } \R^n 
\label{barrier:semi concave}
\end{eqnarray}
where $\xi : \R^n \to [0,1]$ is a continuous function supported
in $\bar{Q}_r$.  
\end{lem}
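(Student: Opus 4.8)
The plan is to build $\varphi$ explicitly out of a radial power-type profile, as in the classical Caffarelli--Cabré construction, but with the extra requirement \eqref{barrier:gradient} that the gradient be small, which forces us to rescale. First I would fix the annulus: choose $r>0$ (to be pinned down later) and look for a function of the form $\varphi(x) = C_1(|x|^{-\beta} - C_2)$ on $\R^n\setminus B_{r/2}$, smoothly glued to a constant inside a ball slightly smaller than $Q_r$ (note $Q_r\subset B_{(r/2)\sqrt n}$, so any ball of radius $\le r\sqrt n$ centered at $0$ contains $Q_r$; we pick the gluing radius so that the cut-off region sits inside $\bar Q_r$). The exponent $\beta>0$ is chosen large enough (depending only on $n,\lambda_F,\Lambda_F$) that on $\{|x|\ge r/2\}$ the pure power $|x|^{-\beta}$ is a strict subsolution of the minimal Pucci operator, i.e. $\mathcal M^-(D^2(|x|^{-\beta}))\ge 0$ there; this is the standard computation giving $\mathcal M^-(D^2(|x|^{-\beta})) = |x|^{-\beta-2}\big(\lambda_F\beta(\beta+1) - \Lambda_F(n-1)\beta\big)$ when the radial second derivative is positive, so one needs $\beta > \tfrac{\Lambda_F}{\lambda_F}(n-1) - 1$. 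Then $R$ is the radius where $|x|^{-\beta} = C_2$, so that \eqref{barrier:lb infinity} holds; normalizing $C_1,C_2$ makes $\varphi\le -2$ on the inner ball, giving \eqref{barrier:local ub}, and since $\varphi$ is bounded below on the compact annulus and nonnegative outside $B_R$, \eqref{barrier:lb} holds with a universal $M_{\mathrm B}>1$ after checking the normalization depends only on universal quantities and $\beta$.

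Next I would enforce the gradient bound \eqref{barrier:gradient}. The function just described has $|D\varphi|$ of order $C_1 r^{-\beta-1}$ near the inner sphere, which is generally not $\le\eps_0$. The fix is a parabolic/linear rescaling: replace $\varphi(x)$ by $\varphi(x/\mu)$ for a large dilation factor $\mu>0$, equivalently enlarge $r$ and $R$ by the factor $\mu$. Under $x\mapsto x/\mu$ the values of $\varphi$ are unchanged (so \eqref{barrier:lb infinity}--\eqref{barrier:lb} persist, with the cube $Q_{3r}$ and balls $B_R$ scaled accordingly), the gradient is divided by $\mu$, and — crucially — the Pucci term $\mathcal M^-(D^2\varphi)$ is divided by $\mu^2$ while the cut-off function $\xi$ is simply transported. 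Choosing $\mu = \max(1,\eps_0^{-1})$ times a universal constant makes $|D\varphi|\le\eps_0$ everywhere; the constraint $R\ge (3r/2)\sqrt n$ is a fixed geometric relation among the unscaled radii that I would arrange in the first step and which survives the isotropic rescaling. All of $r,R,C_{\mathrm B}$ then depend only on $n,\lambda_F,\Lambda_F,\eps_0$ as required, while $M_{\mathrm B}$ stays universal.

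Finally, the subsolution inequality \eqref{barrier:semi concave}. On the open annulus $\{r/2 < |x| < R\}$ we have $\mathcal M^-(D^2\varphi)\ge 0$ by the choice of $\beta$, so \eqref{barrier:semi concave} holds there with $\xi=0$. Outside $B_R$, $\varphi$ can be taken to be a fixed nonnegative function (e.g.\ extended by the same power or flattened) for which $\mathcal M^-(D^2\varphi)$ is bounded; one may even keep the power there since it remains a subsolution. The only region where $\mathcal M^-(D^2\varphi)$ can be very negative is the transition layer where we glue the power to the interior constant — and this layer is contained in $\bar Q_r$ by construction. There $D^2\varphi$ is bounded by a constant depending only on $n,\lambda_F,\Lambda_F,\eps_0$ (all the gluing data), hence $\mathcal M^-(D^2\varphi)\ge -C$ on $\bar Q_r$, and we set $\xi$ to be a smooth $[0,1]$-valued cut-off equal to $1$ on that layer and supported in $\bar Q_r$, with $C_{\mathrm B}=C$. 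Combining the three regions gives \eqref{barrier:semi concave} globally. I expect the main obstacle to be bookkeeping: making a single choice of the parameters $\beta, C_1, C_2, r, R, \mu$ that simultaneously satisfies the normalization $\varphi\le -2$ on $Q_{3r}$, the geometric constraint $R\ge(3r/2)\sqrt n$, the smallness $|D\varphi|\le\eps_0$, and keeps $M_{\mathrm B}$ universal (independent of $\eps_0$) — in particular one must check that enlarging $\mu$ to kill the gradient does not spoil $\varphi\le -2$ on $Q_{3r}$, which works precisely because $\varphi$ is $\le -2$ on a ball of radius comparable to $\mu r$ after scaling and $3r\sqrt n/2\le R$.
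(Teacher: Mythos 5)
Your construction is essentially the paper's: a radial power barrier $M_1 - M_2|x|^{-\alpha}$ with $\alpha>\frac{\Lambda_F}{\lambda_F}(n-1)-1$ (so that $\mathcal{M}^-(D^2\varphi)\ge 0$ off the gluing region), glued to a constant across a layer inside $\bar Q_r$ whose defect is absorbed by $C_{\mathrm{B}}\xi$, with the gradient bound obtained by enlarging the radii --- the paper simply takes $r\sim \eps_0^{-1}$ from the start instead of dilating a posteriori, which is the same operation. The only correction is that your $C_1$ must be negative (the profile has to increase radially from $\le -2$ near the origin to $\ge 0$ outside $B_R$), i.e.\ $\varphi=M_1-M_2|x|^{-\beta}$; with that sign fixed, your Pucci computation and exponent condition coincide with the paper's.
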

%------------------------------------------------------------------
\begin{rem} We recall that this barrier function will be used to prove the
weak Harnack inequality. 
At first glance, it is not clear why we need to construct
a function $\varphi$ such that $\mathcal{M}^- \varphi \ge 0$ on $Q_r$
and $\varphi \le -2$ on $Q_{3r}$. This will be clearer when applying the
cube decomposition in order to estimate the volume of all the level sets
 (and not only one) of a super-solution. And we choose $R \ge (3r/2)\sqrt{n}$ in order that 
$Q_{3r} \subset B_R$. 
\end{rem}
\begin{proof}
We follow \cite{cc95} by choosing $\varphi$ under the following
form for $x \notin B_r$
$$
\varphi (x) = M_1 - M_2 |x|^{-\alpha }
$$
where $\alpha>0$ will be chosen later and $M_1, M_2>0$ have to be chosen such that 
\eqref{barrier:lb infinity}, \eqref{barrier:local ub} and \eqref{barrier:gradient} 
hold true for $x \notin B_r$ (with $R \ge (3r/2) \sqrt{n}$). It is enough to impose
\begin{eqnarray*}
M_2 &\le& M_1 R^\alpha \, , \\
((3r/2)\sqrt{n})^\alpha (M_1+2)&\le& M_2 \, , \\
M_2 &\le &\eps_0 \frac{r^{\alpha+1}}\alpha \, 
\end{eqnarray*} 
or equivalently
$$
((3r/2)\sqrt{n})^\alpha (M_1+2) \le M_2 \le \min ( M_1 R^\alpha, \eps_0 r^{\alpha +1}/\alpha) \, .
$$
One can choose $M_2$ and $M_1$ so that they satisfy the previous condition 
if and only if 
$$
2 \frac{((3r/2)\sqrt{n})^\alpha}{R^\alpha -((3r/2)\sqrt{n})^\alpha}
\le M_1 \le \frac{\eps_0}{\alpha ((3/2)\sqrt{n})^\alpha} r -2 \, .
$$
Hence, we choose $R = q (3r/2)\sqrt{n}$ with $q>1$ and $r>0$ satisfying
$$
\frac2{q^\alpha -1} \le   \frac{\eps_0}{\alpha ((3/2)\sqrt{n})^\alpha} r -2 \, .
$$
It is now enough to choose $q>1$ such that $\frac2{q^\alpha -1} \le 1$ and 
$r$ such that 
$$\frac{\eps_0}{\alpha ((3/2)\sqrt{n})^\alpha} r \ge 3 \, .
$$ 

We now choose $\alpha >0$ so that \eqref{barrier:semi concave} holds true. If $x \notin B_r$, we have
\begin{eqnarray*}
\mathcal{M}^- (D^2 \varphi (x)) &=& -\alpha M_2 |x|^{-(\alpha+2)} (\Lambda_F (n-1)  - \lambda_F (\alpha+1)) \, .
\end{eqnarray*}
Hence it is enough to choose $\alpha > \max(0, \frac{\Lambda_F}{\lambda_F} (n-1) -1)$ to conclude. 

It is next easy to extend $\varphi$ on $\R^n$ such that
\eqref{barrier:local ub} and \eqref{barrier:gradient} remain true and
\eqref{barrier:lb} is satisfied too for some universal constant
$M_B>1$.  Indeed, we have outside $B_r$
$$
\varphi \ge M_1 -M_2 r^{-\alpha} \ge 2 \frac{1}{q^\alpha -1} -
\frac{\eps_0 r}{\alpha} \, .
$$ 
It is now enough to remark that $q$ and $\eps_0 r$ can be choosen
universal and we also saw above that $\alpha$ can be chosen universal
too. Hence $M_B$ can be chosen universal.
\end{proof}

\paragraph{Rescaling solutions.}
We will have to rescale sub- or super-solutions several times. We need
to know how non-linearities are rescaled in order, for instance, to
determine if these new $F$'s satisfy assumptions.
%--------------------
\begin{lem}[Rescaling solutions]\label{lem:rescaled}
  Given $R_0 >0$, $t_0 >0$ and $x_0 \in \R^n$, let $u$ be a
  super-solution of $F$ on $Q_{t_0 R_0}(x_0)$.  Consider the linear
  map $T:Q_{R_0} \to Q_{t_0 R_0}(x_0)$ defined by $T(y) = x_0 + t_0
  y$.  Then the scaled solution $u_s (y) = \frac1{M_0} u (T(y))$ is a
  super-solution of $F_s = 0$ in $Q_{R_0}$ with
$$
F_s (y,v,q,Y) = \frac{t_0^2}{M_0} F (x_0+t_0 y, M_0 v, t_0^{-1} M_0 q, t_0^{-2} M_0 Y) \, .
$$

If $F$ satisfies \eqref{loclip2} (resp. \eqref{loclip3}), then $F_s$ 
satisfies \eqref{loclip2} (resp. \eqref{loclip3}) with constants $M_s,\gamma_s$ 
and functions  $\sigma_s$ and $f_s$
$$
M_s = \frac{t_0 M_F}{M_0} , \quad 
\gamma_s = t_0^2 \gamma_F, \quad
\sigma_s = t_0 . \sigma \circ T , \quad 
f_s = \frac{t_0^2}{ M_0} f \circ T
 \, .
$$ 
In particular, 
$$
\|f_s\|_{L^n (Q_{R_0})} = \frac{t_0}{M_0}\|f \|_{L^n (Q_{t_0 R_0(x_0)})} \, , \qquad
\|\sigma_s\|_{L^q (Q_{R_0})} = t_0^{1-\frac{n}q}\|\sigma \|_{L^q (Q_{t_0 R_0} (x_0))} \, .
$$
\end{lem}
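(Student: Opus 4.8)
The plan is to verify the scaling claims by unwinding the definition of viscosity super-solution under the change of variables, then checking mechanically that the structural inequalities and norm identities transform as asserted.

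First I would establish that $u_s$ is a super-solution of $F_s = 0$ in $Q_{R_0}$. Recall that $u$ being a super-solution on $Q_{t_0 R_0}(x_0)$ means that for every $x \in Q_{t_0 R_0}(x_0)$ and every $(p, X) \in \bar D^{2,-} u(x)$ one has $F(x, u(x), p, X) \le 0$ (with $p \neq 0$ in the singular case). Fix $y_0 \in Q_{R_0}$ and let $(q, Y) \in \bar D^{2,-} u_s(y_0)$. Since $u_s(y) = \frac{1}{M_0} u(x_0 + t_0 y)$ and $T$ is an affine bijection, the chain rule for subjets gives that $\bigl(\frac{M_0}{t_0} q,\, \frac{M_0}{t_0^2} Y\bigr) \in \bar D^{2,-} u(T(y_0))$ — this is just the elementary fact that if $u_s$ is touched from below at $y_0$ by a paraboloid with gradient $q$ and Hessian $Y$, then $u$ is touched from below at $T(y_0)$ by the composed paraboloid, whose gradient and Hessian pick up factors of $t_0^{-1}$ and $t_0^{-2}$ from the inner linear map and a factor $M_0$ from the outer scaling. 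Applying the super-solution property of $u$ at $T(y_0)$ with this subjet yields $F\bigl(x_0 + t_0 y_0,\, M_0 u_s(y_0),\, t_0^{-1} M_0 q,\, t_0^{-2} M_0 Y\bigr) \le 0$, and multiplying by the positive constant $t_0^2 / M_0$ gives exactly $F_s(y_0, u_s(y_0), q, Y) \le 0$. Since $y_0$ and the subjet were arbitrary, $u_s$ is a super-solution of $F_s = 0$ in $Q_{R_0}$.

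Next I would check that $F_s$ inherits \eqref{loclip2} (the argument for \eqref{loclip3} is symmetric, using $\mathcal{M}^-(M) = -\mathcal{M}^+(-M)$ and the homogeneity of the Pucci operators). Suppose $|q| \ge M_s = t_0 M_F / M_0$ and $F_s(y, v, q, Y) \ge 0$. By the definition of $F_s$ this means $F\bigl(x_0 + t_0 y,\, M_0 v,\, t_0^{-1} M_0 q,\, t_0^{-2} M_0 Y\bigr) \ge 0$, and the condition on $|q|$ says precisely that $|t_0^{-1} M_0 q| \ge M_F$. Hence \eqref{loclip2} applies at the point $x_0 + t_0 y$ with gradient $p = t_0^{-1} M_0 q$ and matrix $X = t_0^{-2} M_0 Y$, giving
\[
\mathcal{M}^+\bigl(t_0^{-2} M_0 Y\bigr) + \sigma(x_0 + t_0 y)\,|t_0^{-1} M_0 q| + \gamma_F M_0 v + f(x_0 + t_0 y) \ge 0 .
\]
Using positive homogeneity of $\mathcal{M}^+$, namely $\mathcal{M}^+(\lambda M) = \lambda \mathcal{M}^+(M)$ for $\lambda > 0$, the first term equals $t_0^{-2} M_0\, \mathcal{M}^+(Y)$; multiplying the whole inequality by $t_0^2 / M_0 > 0$ yields
\[
\mathcal{M}^+(Y) + t_0\,\sigma(x_0 + t_0 y)\,|q| + t_0^2 \gamma_F\, v + \frac{t_0^2}{M_0} f(x_0 + t_0 y) \ge 0 ,
\]
which is \eqref{loclip2} for $F_s$ with the stated $M_s$, $\gamma_s = t_0^2 \gamma_F$, $\sigma_s = t_0\, \sigma \circ T$, $f_s = (t_0^2/M_0)\, f \circ T$.

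Finally, the $L^p$-norm identities are a direct change-of-variables computation: since $T$ has Jacobian $t_0^n$, for any $g$ one has $\int_{Q_{R_0}} |g \circ T|^p\, dy = t_0^{-n} \int_{Q_{t_0 R_0}(x_0)} |g|^p\, dx$, so $\|g \circ T\|_{L^p(Q_{R_0})} = t_0^{-n/p} \|g\|_{L^p(Q_{t_0 R_0}(x_0))}$; combining this with the scalar prefactors $t_0^2/M_0$ (for $f_s$, with $p = n$, giving $t_0^{2} t_0^{-1} / M_0 = t_0/M_0$) and $t_0$ (for $\sigma_s$, with $p = q$, giving $t_0 \cdot t_0^{-n/q} = t_0^{1 - n/q}$) produces the two displayed formulas. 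I do not expect any genuine obstacle here; the only point requiring a little care is the correct transformation of (limiting) subjets under the affine change of variables, which is where the scaling factors $t_0^{-1}$, $t_0^{-2}$ and $M_0$ enter, and keeping track of them consistently through the definition of $F_s$.
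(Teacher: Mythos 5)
Your proof is correct and is exactly the routine verification the paper omits (the lemma is stated without proof): transform subjets under the affine map, use positive homogeneity of $\mathcal{M}^\pm$, and change variables in the integrals. One small caveat: the paper's sign convention for super-solutions is $F(x,u(x),p,X)\ge 0$ on subjets (see the proof of Lemma~\ref{lem:wh ub} and the hypothesis of \eqref{loclip2}), whereas you wrote $F\le 0$; since your argument only multiplies by positive constants, the inequality direction is preserved either way and nothing else changes.
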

%-------------------------------------

\section{An ABP estimate}
\label{sec:abp}

As explained in the introduction, we can prove an ABP estimate as soon
as the non-linearity $F$ satisfies a strict ellipticity condition
``for large gradients''. We must also prescribe a growth condition
with respect to first order terms. We thus assume that $F$ satisfies
\eqref{strictell}.  Our first main result is the following theorem.
%---------------------------------
\begin{thm}[ABP estimate]\label{thm:abp}
Consider a non-linearity $F$ which satisfies {\rm (A)} and \eqref{strictell}.
Let $u$ be a (lsc) super-solution of \eqref{eq:main} in $B_d$. 
Then 
\begin{equation}\label{alextim} 
\sup_{B_d} u^- \le   \sup_{\partial B_d} u^-  + C d \left(M_F +  \left(\int_{B_d \cap \{ u +
  M_\partial = \Gamma  (u)\}} (f^+)^n\right)^{1/n}\right)
\end{equation}
where $M_\partial=  \sup_{\partial B_d} u^-$, $\Gamma (u) $ is the
convex hull of $\min (u+M_\partial,0)$ extended by $0$ on $B_{2d}$, 
 $f(x)=g(x,-M_\partial)$ and $C$ is a constant (only) depending on
$\|\sigma\|_{L^n (B_d)}$, $n$ and $\lambda_F$.   
\end{thm}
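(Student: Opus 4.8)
The plan is to follow the classical strategy from \cite{caff89,cc95}, adapted to handle the weaker strict-ellipticity-for-large-gradients condition \eqref{strictell} and the mere lower semi-continuity of $u$. First, I would reduce to a normalized setting: replacing $u$ by $v = \min(u + M_\partial, 0)$, I get a nonpositive lsc function on $B_d$ with $v \ge 0$ (in fact $v=0$) on $\partial B_d$, and it suffices to bound $\sup_{B_d} v^- = \sup_{B_d} v^-$ by the right-hand side. I extend $v$ by $0$ to $B_{2d}$ and let $\Gamma(v) = \Gamma(u)$ be its convex envelope there; the key object is the \emph{lower contact set} $\mathcal{C} = \{x \in B_d : v(x) = \Gamma(v)(x)\}$, equivalently the set where $v$ agrees with a supporting hyperplane from below. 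The standard geometric lemma (the ``area formula'' for the gradient image of the convex envelope) gives
\[
\Big(\sup_{B_d} v^-\Big)^n \le C(n)\, d^n \int_{\mathcal{C}} |\det D^2 \Gamma(v)(x)|\, dx,
\]
or more precisely the measure-theoretic statement that the normal image $\partial \Gamma(v)(\mathcal{C})$ contains a ball of radius $\sim (\sup v^-)/d$, so that $|\partial\Gamma(v)(\mathcal{C})| \ge c(n)(\sup v^-/d)^n$, and then the area formula bounds this by $\int_{\mathcal{C}} \det D^2\Gamma(v)$ (using that on the contact set $\Gamma(v)$ is touched from below by paraboloids, hence is twice differentiable a.e.\ with nonnegative Hessian, via Alexandrov's theorem).

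The next step is the pointwise estimate on $\det D^2\Gamma(v)$ at points of $\mathcal{C}$, and this is where \eqref{strictell} enters and where the main work lies. At a contact point $x_0$ where $\Gamma(v)$ has a second-order Taylor expansion, $v$ is touched from below by an affine function plus a small quadratic error, so there is a linear function $\ell$ with $\ell(x_0) = v(x_0)$, $\ell \le v$ near $x_0$, whose slope $p = D\ell$ satisfies $|p| \le \sup_{B_d} v^- / d =: \sup v^-/d$ — this is the crucial gradient bound that makes the small-gradient degeneracy harmless on the \emph{large} part of the contact set and forces a dichotomy. Using that $u$ is a viscosity super-solution and that $(p, D^2\Gamma(v)(x_0)) \in \bar D^{2,-}u(x_0)$ with $D^2\Gamma(v)(x_0) \ge 0$, I split $\mathcal{C}$ into $\mathcal{C} \cap \{|p| < M_F\}$ and $\mathcal{C} \cap \{|p| \ge M_F\}$. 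On the first piece I do not have the equation, but the contributed measure of the normal image is controlled directly by $|B_{M_F}| \sim M_F^n$, which produces the $M_F d$ term in \eqref{alextim}. On the second piece, \eqref{strictell} applies: from $X := D^2\Gamma(v)(x_0) \ge 0$, $|p|\ge M_F$, and $F(x_0, v(x_0), p, X)\ge 0$ (the super-solution inequality in the viscosity sense, passed to limiting subjets) I obtain $\lambda_F \operatorname{tr}(X) \le \sigma(x_0)|p| + g(x_0, v(x_0))$. Since $v(x_0) \le 0$ and $v(x_0) \ge -M_\partial$ is not directly available, I must be a little careful: properness of $F$ and monotonicity of $g$ should let me replace $g(x_0, v(x_0))$ by $g(x_0, -M_\partial) = f(x_0)$ (this uses that $v(x_0) \ge -M_\partial$, which holds because $v = \min(u+M_\partial,0) \ge -\sup_{B_d} u^- \ge \ldots$ — actually I need $v \ge -M_\partial$ is false in general, so more likely one uses $g$ nondecreasing in $u$ together with $v(x_0) \le 0$ to bound $g(x_0,v(x_0)) \le g(x_0,0)$, or reorganizes the dependence; in any case this bookkeeping with $M_\partial$ is exactly the kind of care the author flags as the difference from the classical statement). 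Then by the arithmetic-geometric mean inequality, $\det X \le (\operatorname{tr} X / n)^n \le (n\lambda_F)^{-n}\big(\sigma(x_0)|p| + f^+(x_0)\big)^n$.

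Finally I combine everything. On the large-gradient part of the contact set,
\[
\Big(\tfrac{c\sup v^-}{d}\Big)^n \le C(n) M_F^n + C(n,\lambda_F)\int_{\mathcal{C}\cap\{|p|\ge M_F\}} \big(\sigma(x)|p(x)| + f^+(x)\big)^n dx,
\]
and since $|p(x)| \le \sup v^-/d$ on the contact set, the term $\int \sigma^n |p|^n$ is bounded by $(\sup v^-/d)^n \|\sigma\|_{L^n(B_d)}^n$; one absorbs this into the left-hand side provided $\|\sigma\|_{L^n(B_d)}$ is small — in general one handles arbitrary $\|\sigma\|_{L^n}$ by the standard covering/iteration argument or by the Cabr\'e–style refinement, which is why $C$ is allowed to depend on $\|\sigma\|_{L^n(B_d)}$. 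What remains, $\int_{\mathcal{C}}(f^+)^n$ over $B_d \cap \{u+M_\partial = \Gamma(u)\}$, together with the $M_F^n$ term, gives exactly \eqref{alextim} after taking $n$-th roots and collecting constants. The main obstacle is the second paragraph's pointwise estimate: justifying that the viscosity super-solution property transfers to the second-order jet of the convex envelope at a.e.\ contact point (Alexandrov + Jensen's lemma / the sup-convolution regularization argument of \cite{caff89,cc95}), and carefully threading the $M_F$-dichotomy and the $M_\partial$-shift through without losing the claimed dependence of $C$.
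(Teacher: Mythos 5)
Your overall strategy coincides with the paper's: convex envelope, contact set, a dichotomy at gradient size $M_F$, a pointwise Hessian bound extracted from \eqref{strictell}, and the area formula. However, the two steps you defer to the end as ``the main obstacle'' are precisely where the proof lives, and one of them, as you state it, would fail. The inequality ``measure of the normal image of the contact set $\le \int \det D^2\Gamma(u)$'' with $D^2\Gamma(u)$ the a.e.\ (Alexandrov) Hessian is false for a general convex function: the Monge--Amp\`ere measure dominates $\det D^2\Gamma(u)\,dx$ and may in addition charge Lebesgue-null sets, so Alexandrov's theorem gives you the inequality in the wrong direction. What legitimizes the area formula is that $\Gamma(u)$ is $C^{1,1}$ on $\mathcal{B}=\{|D\Gamma(u)|\ge M_F\}$ --- Lemma~\ref{lem:1} of the paper --- and this is not a soft convex-analysis fact: it is extracted from the equation through \eqref{strictell}, via the subjet representation of the convex hull (Propositions~\ref{prop:combinaison} and~\ref{sousjet}, with the parallel-sum inequality) and the observation that the points $x_i$ ``called'' by $x\in\mathcal{B}$ satisfy $D\Gamma(u)(x_i)=D\Gamma(u)(x)$, hence lie in $\mathcal{B}$ themselves, so that \eqref{strictell} can be invoked at each $x_i$. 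The same mechanism gives $D^2\Gamma(u)=0$ a.e.\ on $\mathcal{B}$ off the contact set (Lemma~\ref{lem:2}), which is what confines the final integral to $B_d\cap\{u+M_\partial=\Gamma(u)\}$. Your plan never engages with this and would stall exactly at the area formula.

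The second gap is the $\sigma$-term. Absorption into the left-hand side requires $\|\sigma\|_{L^n}$ small, and a covering/iteration would not obviously preserve either the contact-set form of \eqref{alextim} or the claimed dependence of $C$. The paper instead applies the area formula with the weight $g(p)=\bigl(|p|^{n/(n-1)}+\mu^{n/(n-1)}\bigr)^{1-n}$, integrates it over the annulus $B_{M/(3d)}\setminus B_{M_F}$ (this is where your $M_F$-dichotomy is implemented), chooses $\mu^n=\int (f^+)^n$ over the contact set, and exponentiates the resulting logarithm; this one computation simultaneously handles arbitrary $\|\sigma\|_{L^n}$ with the explicit constant $3e^{C_{\mathrm{ABP}}(1+\|\sigma\|_{L^n}^n)}$ and produces the additive $M_F+\bigl(\int(f^+)^n\bigr)^{1/n}$ structure of \eqref{alextim}. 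Finally, the $M_\partial$ bookkeeping you leave unresolved goes the other way from your guess: on the contact set $u(x_0)+M_\partial=\Gamma(u)(x_0)\le 0$, so $u(x_0)\le -M_\partial$, and the monotonicity of $g$ in its second argument (implicit in $F$ being proper) yields $g(x_0,u(x_0))\le g(x_0,-M_\partial)=f(x_0)$.
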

%----------------------------
\begin{rem}
Remark that when the equation is not degenerate ($M_F=0$), Eq.~\eqref{alextim}
corresponds to the classical ABP estimate. 
\end{rem}
\begin{rem}
The constant $C$ equals  $3e^{C_{\mathrm{ABP}} (1+ ||\sigma||_{L^n(B_d)}^n )}$ 
where $C_{\mathrm{ABP}} = \frac{n 2^{n-2}}{\omega_n \lambda_F^n}$.
\end{rem}
\begin{proof}[Sketch of proof]
The proof follows the ideas of \cite{cc95,convexe}. The key lemma is the following one. 
%----------
\begin{lem}\label{lem:1}
The function $\Gamma (u)$ is $C^{1,1}$ on $\mathcal{B} = \{ x \in B_d : |D \Gamma (u) (x)| \ge M_F \}$. 
\end{lem}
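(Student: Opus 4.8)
Throughout write $\Gamma=\Gamma(u)$ and let $w=\min(u+M_\partial,0)$, extended by $0$ to $B_{2d}$, so that $\Gamma$ is the convex envelope of $w$ on $B_{2d}$. The plan is to follow the convex-envelope technology of \cite{cc95,convexe}, the genuinely new input being that the super-solution property together with \eqref{strictell} is what supplies the second-order control. I would begin by recording the soft facts: $\Gamma$ is convex and, on the open set where $\partial\Gamma$ takes values outside $\overline{B_{M_F}}$ (which up to the harmless borderline $|D\Gamma|=M_F$ is $\mathcal B$), finite, locally Lipschitz and $C^1$; by Alexandrov's theorem $\Gamma$ is twice differentiable at almost every point, with $D^2\Gamma\ge 0$ wherever it exists; and if $M_F=0$ then $\mathcal B=B_d$ and the statement reduces to the classical one of \cite{cc95}, so one may assume $M_F>0$.

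The core of the proof is a pointwise Hessian bound at the large-gradient contact points. Let $x_0\in\mathcal B$ be a point at which $\Gamma$ is twice differentiable (in Alexandrov's sense) and which is also a contact point, $w(x_0)=\Gamma(x_0)$. Since $M_F>0$, $\Gamma\le 0$ and $\Gamma$ is convex, a point where $\Gamma$ attains the value $0$ lies in a flat of $\Gamma$ on which $D\Gamma=0$; hence $x_0\in\mathcal B$ forces $w(x_0)<0$, i.e.\ $w(x_0)=u(x_0)+M_\partial$. The second-order Taylor polynomial $P(x)=\Gamma(x_0)+D\Gamma(x_0)\cdot(x-x_0)+\tfrac12 D^2\Gamma(x_0)(x-x_0)\cdot(x-x_0)$ then satisfies $P\le\Gamma+o(|x-x_0|^2)\le w+o(|x-x_0|^2)\le u+M_\partial+o(|x-x_0|^2)$ with equality at $x_0$, so $(D\Gamma(x_0),D^2\Gamma(x_0))\in D^{2,-}u(x_0)$. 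Since $u$ is a super-solution of \eqref{eq:main}, $F(x_0,u(x_0),D\Gamma(x_0),D^2\Gamma(x_0))\ge 0$; as $D^2\Gamma(x_0)\ge 0$ and $|D\Gamma(x_0)|\ge M_F$, \eqref{strictell} applies and gives $\lambda_F\,\tr D^2\Gamma(x_0)\le\sigma(x_0)|D\Gamma(x_0)|+g(x_0,u(x_0))$. Using the continuity of $\sigma$ and $g$ and the local boundedness of $\Gamma$ (hence of $u=\Gamma-M_\partial$ on the contact set and of $D\Gamma$), the right-hand side is locally bounded on $\mathcal B$, so $0\le D^2\Gamma(x_0)\le C\,I$ there.

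Next I would rule out corners of $\Gamma$ inside $\mathcal B$. If $\partial\Gamma(x_0)$ had positive diameter at some $x_0\in\mathcal B$, then $x_0$ would be a contact point — off the contact set $\Gamma$ is differentiable, a classical property of convex envelopes, see \cite{cc95,convexe} — so again $w(x_0)=u(x_0)+M_\partial$. Writing $K=\partial\Gamma(x_0)\subset\R^n\setminus B_{M_F}$ and letting $p$ be its barycentre (so $p\in K$, hence $|p|\ge M_F$), the support function of $K-p$ dominates $\|\Pi(\cdot)\|$ up to a positive factor, $\Pi$ denoting the orthogonal projection onto the linear span $V$ of $K-p$; since $\Gamma(x)\ge\Gamma(x_0)+\max_{q\in K}q\cdot(x-x_0)$ and $\Gamma\le w\le u+M_\partial$ with equality at $x_0$, one obtains $(p,N\Pi)\in D^{2,-}u(x_0)$ for every $N\ge 0$, with $N\Pi\ge 0$ and $\tr(N\Pi)=N\dim V\to+\infty$ — contradicting the bound from \eqref{strictell} applied with $X=N\Pi$. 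Hence $\partial\Gamma$ is single-valued on $\mathcal B$, so $\Gamma\in C^1(\mathcal B)$, and combining this exclusion of corners (full- and lower-dimensional) with the Hessian bound of the previous step and the standard description of $\Gamma$ off the contact set, the distributional Hessian of $\Gamma$ restricted to $\mathcal B$ belongs to $L^\infty_{\mathrm{loc}}$, whence $\Gamma\in C^{1,1}_{\mathrm{loc}}(\mathcal B)$.

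In this scheme the second-order bound and the exclusion of corners are both immediate consequences of the super-solution inequality and \eqref{strictell}: a corner of $\Gamma$ in $\mathcal B$ would manufacture second-order subjets of $u$ with arbitrarily large trace, and at a large-gradient contact point that is forbidden. The real difficulty, and the step where care is genuinely needed, is the last, purely convex-analytic one: passing from these pointwise and almost-everywhere facts to a bona fide local $W^{2,\infty}$ bound on all of $\mathcal B$, i.e.\ checking that no singular part of the Hessian measure of $\Gamma$ (a corner or an ``edge'' along a non-contact ridge) survives in $\mathcal B$ and that the behaviour of $\Gamma$ off the contact set does not spoil the estimate; this is exactly the bookkeeping carried out in \cite{cc95,convexe}.
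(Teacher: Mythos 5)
Your Step 1 (the subjet bound at the large-gradient contact points) and your exclusion of corners are correct and correspond to Case~1 of the paper's argument. The genuine gap is in your final step: for a convex function, an almost-everywhere bound on the Alexandrov Hessian together with $C^1$ regularity does \emph{not} imply that the distributional Hessian lies in $L^\infty_{\mathrm{loc}}$ --- the Hessian measure may carry a diffuse singular part invisible to both pieces of information (in one dimension take $\Gamma'$ equal to a Cantor function: $\Gamma$ is convex and $C^1$, its Alexandrov second derivative vanishes a.e., yet $\Gamma\notin C^{1,1}$). Your corner exclusion only rules out multivaluedness of $\partial\Gamma$, your a.e.\ bound only controls the absolutely continuous part of the Hessian measure, and the affineness of $\Gamma$ along the flats spanned by called points only controls the Hessian in the directions of those flats, not transversally. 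So the step you defer to as ``the bookkeeping carried out in \cite{cc95,convexe}'' is not bookkeeping; it is the heart of the proof.

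What is actually needed, and what the paper proves, is the bound $A\le CI$ for \emph{every} subjet $(p,A)\in D^{2,-}\Gamma(u)(x)$ at \emph{every} $x\in\mathcal{B}$, including non-contact points; one then concludes directly from the semiconcavity criterion of \cite{all97} (Lemma~\ref{lem:cs semi concave}), which requires the pointwise-everywhere bound precisely because of the example above. At a non-contact point $x$ the bound is obtained via Propositions~\ref{prop:combinaison} and~\ref{sousjet}: the points $x_1,\dots,x_q$ called by $x$ satisfy $D\Gamma(u)(x_i)=D\Gamma(u)(x)=p$, hence $|p|\ge M_F$ and each $x_i$ again lies in $\mathcal{B}$ --- this propagation of the large-gradient condition to the called points is the key observation of the proof and is absent from your proposal; the contact-point estimate of your Step~1, applied in the limiting-subjet sense at each $x_i\in B_d$, gives $A_i\le CI$; and the parallel-sum inequality $A_\eps\le\Box_{i}\,\lambda_i^{-1}A_i$, together with the fact that at most one called point can sit on $\partial B_{2d}$ (so some $\lambda_i$ is bounded below), transfers the bound back to $x$. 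Without this transfer mechanism your argument does not close.
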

%----------
\begin{rem}\label{rem:definition de B}
Remark that before knowing that $\Gamma (u)$ is $C^{1,1}$, $D \Gamma (u)$ is not uniquely determined. 
Hence $\mathcal{B}$ should be first defined as follows
$$
\mathcal{B} = \{ x \in B_d : \forall (p,A) \in D^{2,-} \Gamma (u) (x), |p| \ge M_F \} \, .
$$
\end{rem}
Lemma~\ref{lem:1} is proved together with  
%----------
\begin{lem}\label{lem:2}
The Hessian of $\Gamma (u)$ satisfies on $\mathcal{B}$ the following properties
\begin{enumerate}
\item $D^2 \Gamma (u) = 0$ a.e. in  $\mathcal{B} \setminus \{ u +
  M_\partial = \Gamma  (u)\}$ ; 
\item $D^2 \Gamma (u) (x) \le \lambda_F^{-1} \big\{\sigma (x) |D
  \Gamma (u) (x)| + f^+ (x)\big\} I$ a.e. in $\mathcal{B} \cap \{ u +
  M_\partial = \Gamma (u)\}$.
\end{enumerate}
\end{lem}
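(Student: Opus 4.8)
\textbf{Plan of proof for Lemmas~\ref{lem:1} and \ref{lem:2}.}
The plan is to exploit that $\Gamma=\Gamma(u)$ is, by construction, a convex function on $B_{2d}$ (being the convex envelope of $\min(u+M_\partial,0)$ extended by $0$), so that $\Gamma$ is automatically locally Lipschitz and, by Alexandrov's theorem, twice differentiable almost everywhere with $D^2\Gamma \ge 0$ a.e.; moreover $\bar D^{2,-}\Gamma(x)\neq\emptyset$ at every interior point. The work is therefore to promote ``$C^{1,1}$ a.e.\ type'' information to genuine $C^{1,1}$ on the open-ish set $\mathcal{B}$ (which, as Remark~\ref{rem:definition de B} notes, must be defined via subjets before regularity is known), and to derive the pointwise Hessian bounds of Lemma~\ref{lem:2}. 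First I would recall the standard convex-analysis fact (as in \cite{cc95}) that a convex function is $C^{1,1}$ near a point precisely when there is a uniform one-sided quadratic bound from above on its graph there, i.e.\ when the elements of its subjets have uniformly bounded ``second-order part'' locally; so it suffices to produce, for each $x_0\in\mathcal{B}$, an affine function touching $\Gamma$ from below at $x_0$ together with a paraboloid of universally controlled opening touching it from above in a neighbourhood.

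The core step is a touching/viscosity argument. Fix $x_0\in\mathcal{B}$ and $(p,A)\in D^{2,-}\Gamma(x_0)$; since $|p|\ge M_F$, and since by definition of the convex envelope $\Gamma$ agrees with $u+M_\partial$ on the contact set while being affine on each connected component of the complement, one has two cases. On $\mathcal{B}\setminus\{u+M_\partial=\Gamma\}$, $\Gamma$ is affine near $x_0$, whence $D^2\Gamma=0$ there, giving item~(1) of Lemma~\ref{lem:2}. On $\mathcal{B}\cap\{u+M_\partial=\Gamma\}$, the affine support of $\Gamma$ at $x_0$ also touches $u+M_\partial$ from below at $x_0$; perturbing it slightly (adding a small multiple of $|x-x_0|^2$ and using the contact structure) produces a $C^2$ test function touching the super-solution $u$ from below at $x_0$ with gradient of norm $\ge M_F$. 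Plugging this test function into the definition of viscosity super-solution of \eqref{eq:main} and invoking the strict ellipticity for large gradients \eqref{strictell} — with $g(x,\cdot)$ evaluated at the relevant value and recalling $u+M_\partial=\Gamma\le 0$ together with properness of $F$ to replace $g(x,u)$ by $g(x,-M_\partial)=f(x)$ up to sign — yields
\[
-\lambda_F\,\mathrm{tr}(A') + \sigma(x_0)\,|p| + f^+(x_0) \ge 0
\]
for the Hessian $A'$ of the test function, which can be taken as close to $A$ as we wish. Since $A\ge 0$ (convexity), every eigenvalue of $A$ is nonnegative and bounded by the trace, so $\mathrm{tr}(A)\le \lambda_F^{-1}(\sigma(x_0)|p|+f^+(x_0))$, hence $0\le A\le \lambda_F^{-1}(\sigma(x_0)|p|+f^+(x_0))I$. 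Taking this over all subjets at points of a small neighbourhood (using continuity of $\sigma,f$ and local boundedness of $|D\Gamma|$) gives a uniform upper paraboloid bound, which is exactly item~(2) and simultaneously the $C^{1,1}$ conclusion of Lemma~\ref{lem:1}.

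The main obstacle, I expect, is not the ellipticity estimate itself but the bookkeeping around the set $\mathcal{B}$ and the subjet definition: one must show $\mathcal{B}$ is (relatively) open and that the gradient bound $|D\Gamma|\ge M_F$ at $x_0$ propagates to a neighbourhood so that the $C^{1,1}$ estimate is uniform there — this uses the lower semicontinuity of $u$ (hence of the relevant touching inequalities) and the fact that, on a convex function, the subjet/superjet gradients vary upper-semicontinuously. A secondary subtlety is the passage from ``$(p,A)\in D^{2,-}\Gamma$'' to an actual $C^2$ function touching $u$ (not just $\Gamma$) from below: this is where one uses that on the contact set $\Gamma$ lies below $u+M_\partial$ with equality at $x_0$, so any function touching $\Gamma$ from below at $x_0$ also touches $u+M_\partial$ from below there; a standard quadratic mollification of the affine support then produces the required strict $C^2$ test function while keeping the gradient norm $\ge M_F$. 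Finally, one invokes Alexandrov's theorem on the now-$C^{1,1}$ function $\Gamma|_\mathcal{B}$ to make sense of $D^2\Gamma(x)$ a.e.\ and to read off items~(1)--(2) as genuine a.e.\ pointwise inequalities.
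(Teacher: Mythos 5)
The decisive gap is at the non-contact points of $\mathcal{B}$. Your Hessian bound is derived only on $\mathcal{B}\cap\{u+M_\partial=\Gamma(u)\}$, where a subjet of $\Gamma(u)$ is also a subjet of $u$ and the equation can be tested. On $\mathcal{B}\setminus\{u+M_\partial=\Gamma(u)\}$ you assert that $\Gamma(u)$ is affine in a neighbourhood of $x_0$; for a convex envelope this is false in general: by Proposition~\ref{prop:combinaison} it is affine only on the simplex spanned by the points $x_1,\dots,x_q$ that $x_0$ calls, and transversally to that simplex it may be strictly convex. As a consequence your ``uniform upper paraboloid bound on a neighbourhood'' is never established — every neighbourhood of a contact point of $\mathcal{B}$ typically contains non-contact points at which you have no control on the subjets — so neither the $C^{1,1}$ conclusion of Lemma~\ref{lem:1} nor the a.e.\ inequalities of Lemma~\ref{lem:2} follow from what you prove. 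The paper closes exactly this gap with Proposition~\ref{sousjet}: for $(p,A)\in D^{2,-}\Gamma(u)(x)$ at a non-contact point, there exist matrices $A_i$ with $(p,A_i)\in\bar{D}^{2,-}u(x_i)$ and $A_\eps\le \Box_{i=1}^q(\lambda_i^{-1}A_i)$ (parallel sum), and — the key remark — each called point $x_i$ lies in $\mathcal{B}$ because $D\Gamma(u)(x_i)=D\Gamma(u)(x)=p$, so $|p|\ge M_F$ and the contact-point estimate applies to each $A_i$ and transfers to $A$. One must also treat the case where one $x_i$ lies on $\partial B_{2d}$, where the equation is unavailable; the paper does this by extracting an interior $x_i$ with $\lambda_i$ bounded below.

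Two further points. First, for a subjet $(p,A)$ of a convex function one does \emph{not} have $A\ge 0$ automatically (any sufficiently negative $A$ is admissible); the reduction to $A\ge 0$, which you need both to invoke \eqref{strictell} and to pass from the trace bound to $A\le \lambda_F^{-1}(\sigma|p|+f^+)I$, requires the approximation Lemma~\ref{lem:non-negative hessians}. Second, your test-function construction at contact points is off: adding a small multiple of $|x-x_0|^2$ to the affine support destroys the touching from below, and the affine support itself has zero Hessian, so it yields no information on $A$; the correct and simpler observation — which you do state afterwards — is that $D^{2,-}\Gamma(u)(x_0)\subset D^{2,-}u(x_0)$ at contact points, so $(p,A)$ itself (approximated so that $A\ge0$) can be plugged into the super-solution property. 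Finally, you need not worry about $\mathcal{B}$ being open: the semiconcavity criterion (Lemma~\ref{lem:cs semi concave}) only requires the pointwise subjet bound on $\mathcal{B}$, not a locally uniform one on open neighbourhoods.
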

%---------
Proofs of these two lemmata can be adapted from the classical ones by
remarking that points $x_i$ called by $x \in \mathcal{B} $ when
computing the convex hull $\Gamma (u)$ (see
Proposition~\ref{prop:combinaison} in Appendix~\ref{app:additional}) satisfy $D\Gamma (u)(x_i) =
D\Gamma (u)(x)$.  In particular, $x_i \in \mathcal{B}$, \textit{i.e.}
$|D \Gamma (u)(x_i)| \ge M_F$ and consequently \eqref{strictell} can
be used.  The reader is referred to Appendix~\ref{app:additional}
where detailed proofs are given for his convenience.
\begin{lem}\label{lem:3}
The following inclusion holds true
\begin{equation}\label{alextim-weak} 
B_{M/(3d)} (0) \setminus B_{M_F} (0 ) \subset D \Gamma (u) (\mathcal{B}) \, .  
\end{equation}
where $M$ denotes $(\sup_{B_d} u^- -\sup_{\partial B_d} u^-)^+$ and 
$\mathcal{B} = \{ x \in B_d : |D \Gamma (u) (x)| \ge M_F \}$. 
\end{lem}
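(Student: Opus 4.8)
\noindent
The plan is to run the classical ``sliding a plane from below'' argument on the convex envelope, adapted to the degenerate setting. Write $v=\min(u+M_\partial,0)$ on $B_d$, extended by $0$ on $B_{2d}\setminus B_d$, so that $\Gamma(u)$ is the convex envelope of $v$ on $\overline{B_{2d}}$. First I would record the elementary facts that $\Gamma(u)\le v\le 0$, that $\Gamma(u)=0$ on $\partial B_{2d}$ (each point of $\partial B_{2d}$ being an extreme point of $\overline{B_{2d}}$), and that $u+M_\partial\ge u+u^-\ge 0$ on $\partial B_d$, so $v=0$ there as well. If $M=0$ the set on the left of \eqref{alextim-weak} is empty, so I assume $M>0$. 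Then $\inf_{\overline{B_{2d}}}v=M_\partial-\sup_{B_d}u^-=-M<0$ is attained, by lower semicontinuity of $v$ and the fact that $v=0$ on $\partial B_d$, at some $x_0\in B_d$; and $-M\le\Gamma(u)(x_0)\le v(x_0)=-M$ forces $\Gamma(u)(x_0)=-M$.

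Next, I would fix $p$ with $M_F\le|p|<M/(3d)$ and minimise the convex function $y\mapsto\Gamma(u)(y)-p\cdot y$ over $\overline{B_{2d}}$. Comparing its value at $x_0$, which is $-M-p\cdot x_0\le -M+d|p|$, with its values $-p\cdot y\ge -2d|p|$ on $\partial B_{2d}$ — and using $|p|<M/(3d)$ — one sees the infimum is not attained on $\partial B_{2d}$; hence it is attained at an interior point $\bar y\in B_{2d}$, and minimality together with convexity of $\Gamma(u)$ shows that $p$ is a subgradient of $\Gamma(u)$ at $\bar y$. Moreover
\[
\Gamma(u)(\bar y)\le -M+p\cdot(\bar y-x_0)\le -M+|p|\,|\bar y-x_0|<-M+\tfrac{M}{3d}\cdot 3d=0 ,
\]
since $|\bar y-x_0|\le|\bar y|+|x_0|<3d$. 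This is the only quantitative input, and it is exactly what pins down the radius $M/(3d)$.

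Then I would invoke Proposition~\ref{prop:combinaison} to write $\bar y=\sum_i\lambda_i x_i$ as a convex combination, with $\lambda_i>0$, of contact points of $\Gamma(u)$ with $v$, so that $\Gamma(u)(x_i)=v(x_i)$ and $\sum_i\lambda_i v(x_i)=\Gamma(u)(\bar y)$. As the supporting affine function $z\mapsto\Gamma(u)(\bar y)+p\cdot(z-\bar y)$ lies below $\Gamma(u)$ and coincides with it at $\bar y$, it must coincide with $\Gamma(u)$ at each $x_i$, so $p$ is a subgradient of $\Gamma(u)$ at every $x_i$. From $\sum_i\lambda_i v(x_i)=\Gamma(u)(\bar y)<0$ I pick $i_0$ with $v(x_{i_0})<0$, which, since $v\le 0$ everywhere and $v=0$ off $B_d$, forces $x_{i_0}\in B_d$. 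Finally, by Lemma~\ref{lem:1} the convex function $\Gamma(u)$ is differentiable at $x_{i_0}$ — this is where \eqref{strictell} intervenes, ruling out a downward corner of $\Gamma(u)$ at a contact point carrying a subgradient of size $\ge M_F$ — so its unique subgradient there equals $D\Gamma(u)(x_{i_0})=p$ with $|p|\ge M_F$; hence $x_{i_0}\in\mathcal{B}$ and $p=D\Gamma(u)(x_{i_0})\in D\Gamma(u)(\mathcal{B})$. Since $p$ was an arbitrary point of $B_{M/(3d)}\setminus B_{M_F}$, this establishes \eqref{alextim-weak}.

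The geometry of the convex envelope (existence of the touching point, localisation of the contact points inside $B_d$, the exact radius from the comparison on $\partial B_{2d}$) is routine. The step I expect to be the real obstacle is the last one: certifying that the touching point $x_{i_0}$ belongs to $\mathcal{B}$, i.e. that $\Gamma(u)$ is differentiable there with gradient exactly $p$. This is not a feature of arbitrary convex envelopes but of the envelope of a super-solution — it rests on the a priori semiconcavity ``where the gradient is large'' carried by \eqref{strictell} — which is precisely why Lemma~\ref{lem:3} must be proved jointly with Lemmata~\ref{lem:1} and~\ref{lem:2}.
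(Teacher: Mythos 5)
Your proposal is correct and follows essentially the same route as the paper, whose entire proof is the one-line reduction to the classical fact $B_{M/(3d)}(0)\subset D\Gamma(u)(B_d)$ — precisely the sliding-plane argument you carry out in detail (touching point $x_0$ with $\Gamma(u)(x_0)=-M$, comparison on $\partial B_{2d}$ giving the radius $M/(3d)$, localisation of the contact points in $B_d$ via Proposition~\ref{prop:combinaison}). Your final step — certifying $x_{i_0}\in\mathcal{B}$, which requires knowing that \emph{all} subgradients at $x_{i_0}$ have norm $\ge M_F$ and hence leans on Lemma~\ref{lem:1}/\eqref{strictell} to exclude a corner there — is exactly the point the paper glosses over, and your remark that this forces the lemma to be read jointly with Lemmata~\ref{lem:1} and~\ref{lem:2} is the right way to resolve it.
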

\begin{proof}
This lemma is a consequence of the classical fact
$$
B_{M/(3d)} (0) \subset D \Gamma (u) (B_d) \, .
$$
\end{proof}
From now on, we assume without loss of generality that $M/(3d) \ge M_F$.
We then use Lemma~\ref{lem:1} in order to apply the area formula (see
\cite[Theorem~3.2.5]{federer} and Remark~\ref{rem:areaformula} below) to the
Lipschitz map $D \Gamma (u) : \mathcal{B} \to \R^n$  and to the function
$g(p)= (|p|^{n/(n-1)} + \mu^{n/(n-1)})^{(1-n)}$ for some positive real number $\mu$ to be fixed later.
$$
\int_{D \Gamma (u)(\mathcal{B})} g (p) dp = \int_{\mathcal{B}} g(D \Gamma (u)) \; \mathrm{det} D^2 \Gamma (u) \, . 
$$
On one hand, we can use Lemmata~\ref{lem:2} and \ref{lem:3}  in order to get
\begin{eqnarray*}
\int_{B_{M/(3d)} (0) \setminus B_{M_F}(0)} g(p) dp 
& \le &\int_{D \Gamma (u)(\mathcal{B})} g (p) dp \\
& \le & \int_{\mathcal{B}} g(D \Gamma (u)) \; \mathrm{det} D^2 \Gamma (u) \\
& \le & \frac1{\lambda_F^{n}} \int_{\mathcal{B} \cap \{ u + M_\partial = \Gamma (u) \} } 
g(D \Gamma (u)) (\sigma  |D \Gamma (u) | + f^+ )^n  \\
& \le & \frac{1}{\lambda_F^{n}} \int_{\mathcal{B} \cap \{ u + M_\partial = \Gamma (u) \} } 
(|\sigma|^n + \mu^{-n} (f^+)^n) \, .
\end{eqnarray*}
If now one chooses $\mu$ such that $\mu^n =  \int_{\mathcal{B} \cap \{ u + M_\partial = \Gamma (u) \} } (f^+)^n$,
we obtain from the inequality $g(p) \ge 2^{2-n} (|p|^n + \mu^n)^{-1}$ the following estimate
\begin{eqnarray*}
\frac{2^{2-n}}n \omega_n \ln \frac{(M/(3d))^n + \mu^n}{(M_F)^n + \mu^n} & =& 
2^{2-n} \omega_n \int_{M_F}^{M/d} \frac{r^{n-1} dr}{r^n + \mu^n} \\
&\le& \int_{B_{M/d} (0) \setminus B_{M_F}(0)} g(p) dp \\
& \le &  \lambda_F^{-n}(1+||\sigma||_n^n) 
\end{eqnarray*}
where $\omega_n$ denotes the volume of the unit ball. It is now easy to get \eqref{alextim}.
\end{proof}
%------------------------------------
\begin{rem}\label{rem:abp-structure}
We see from the previous proof 
that Assumptions~\rm{(A)} and \eqref{strictell} on $F$
are important in order to get the following property
\begin{equation}\label{eq:abp-structure}
\forall (p,A) \in D^{2,-} u (x) \, : \quad
\left.\begin{array}{r} u (x) \le 0 \\ A \ge 0 \\ |p| \ge M_F \end{array}\right\} 
 \Rightarrow \lambda_F \tr A \le \sigma (x) |p| + f(x)  \, .
\end{equation}
As a matter of fact, the previous piece of information is the relevant
one in order to get \eqref{alextim}. Indeed, in Lemma~\ref{lem:2}, the second
estimate can be rewritten as follows
$$
\lambda_F D^2 \Gamma (u)(x) \le \{ \sigma (x) |D \Gamma (u)| + f (x) \} I \, .
$$
\end{rem}
%----------------------------
\begin{rem}\label{rem:areaformula}
The area formula in \cite{federer} is stated for maps $G: \R^n \to \R^n$ 
that are Lipschitz continuous on $\R^n$ (in our case). However, the result still 
holds true if $G$ is only Lipschitz continuous on $\mathcal{B}$ since it is always possible to extend it
in a Lipschitz map $\tilde{G}$ on $\R^n$ with $G=\tilde{G}$ on $\mathcal{B}$.
\end{rem}

\section{Harnack inequality}
\label{sec:harnack}

In this section, we explain how to derive a Harnack inequality from
the ABP estimate. As usual, we obtain it 
by deriving on one hand a weak Harnack inequality
and on the other hand a local maximum principle for the fully
nonlinear equation~\eqref{eq:main}.

In order to get a weak Harnack inequality and a local maximum principle
respectively, Condition~\eqref{strictell} is strengthened by assuming
\eqref{loclip2} and \eqref{loclip3} respectively. 

The Harnack inequality is obtained as a combination of the weak Harnack
inequality and the local maximum principle. Here are precise statements.
%-----------------
\begin{thm}[Weak Harnack inequality]\label{thm:whi}
  Given $q>n$ and a non-linearity $F$ satisfying {\rm (A)} and
  \eqref{loclip2} for some continuous functions $f$ and $\sigma$ in
  $Q_1$, consider a non-negative super-solution $u$ of \eqref{eq:main}
  in $Q_1$.  Then
\begin{equation}\label{eq:whi}
\|u\|_{L^{p_0} (Q_{1/4})} \le C ( \inf_{Q_{1/2}} u + \max (M_F, \|f\|_{L^n(Q_1)}) ) 
\end{equation}
where $p_0>0$ is universal and $C$ (only) depends on 
$n$, $q$, $\lambda_F,\Lambda_F$, $\gamma_F$ and $\|\sigma\|_{L^q (Q_1)}$. 
\end{thm}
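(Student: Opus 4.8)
The plan is to follow the classical Krylov--Safonov / Caffarelli--Cabr\'e scheme adapted to the degenerate setting, using the ABP estimate (Theorem~\ref{thm:abp}) as the workhorse and the barrier of Lemma~\ref{lem:barrier} as the geometric input. First I would reduce to a normalized situation: after multiplying $u$ by a suitable constant we may assume $\inf_{Q_{1/2}} u \le 1$ and $\max(M_F,\|f\|_{L^n(Q_1)})$ is as small as we wish (this is exactly where the rescaling Lemma~\ref{lem:rescaled} must be invoked, since scaling $u$ by $1/M_0$ changes $M_F$ into $M_F/M_0$ and $\|f\|_{L^n}$ into $\|f\|_{L^n}/M_0$ — both quantities scale the right way, so a genuine smallness assumption can be made without loss of generality). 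The goal after normalization is the measure estimate: there exist universal $\eps,\mu\in(0,1)$ and $M>1$ such that, for all $k\ge 0$,
\[
|\{u>M^k\}\cap Q_{1/4}| \le (1-\mu)^k\,|Q_{1/4}|.
\]
This decay immediately yields $\|u\|_{L^{p_0}(Q_{1/4})}\le C$ for $p_0$ small enough that $\sum M^{kp_0}(1-\mu)^k<\infty$, which is the claimed \eqref{eq:whi}.

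The heart of the argument is the $k=1$ step, i.e. the first measure estimate: if $\inf_{Q_{1/2}}u\le 1$ and $M_F$, $\|f\|_{L^n}$ are small, then $|\{u\le M\}\cap Q_{1/4}|\ge \mu\,|Q_{1/4}|$ for a universal $M$. Here I would apply the ABP estimate to the function $w = u + \varphi$ (or rather $u$ minus the barrier), where $\varphi$ is the barrier of Lemma~\ref{lem:barrier} with $\eps_0$ chosen small and suitably rescaled so that $Q_{3r}$ covers $Q_{1/2}$ and $B_R$ is contained in the domain. Since $\varphi$ is smooth with $|D\varphi|\le\eps_0$ small, the sum $u+\varphi$ is still (essentially) a super-solution of an equation satisfying \eqref{strictell} with the same $\lambda_F$ and with $\sigma$, $f$ perturbed only by controlled amounts coming from $C_{\mathrm B}\xi$ and $\eps_0$; crucially the gradient condition $|p|\ge M_F$ in \eqref{strictell} is preserved because adding a function with tiny gradient does not destroy it on the relevant contact set (one uses $M_F$ small here). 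Applying Theorem~\ref{thm:abp} to $w=u+\varphi$ on the ball $B_R$: the boundary term is controlled by \eqref{barrier:lb infinity}, while $\inf w \le \inf_{Q_{3r}}(u+\varphi)\le 1-2<0$ by \eqref{barrier:local ub}, so $\sup w^->0$ and the right-hand side of \eqref{alextim} must be bounded below. Since $f^+$ is small and $M_F$ is small, this forces the contact set $\{w+M_\partial=\Gamma(w)\}$ to have positive measure, and on that set $\varphi\ge -M_{\mathrm B}$ gives $u\le M_{\mathrm B}+|M_\partial|=:M$, whence $|\{u\le M\}\cap Q_r|>0$ quantitatively. The term $\gamma_F u$ in \eqref{loclip2} is absorbed by writing $g(x,u)=\gamma_F u+f(x)$ and noting that on the contact set where $u\le M$ this is bounded by $\gamma_F M+f^+$; alternatively one treats $\gamma_F$ by a further rescaling that makes $\gamma_F$ small too (as in Lemma~\ref{lem:rescaled}, $\gamma_s=t_0^2\gamma_F$).

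Next I would upgrade the single measure estimate to the full geometric decay via the Calder\'on--Zygmund cube decomposition (the dyadic ``stacked cubes'' lemma of Krylov--Safonov, as in \cite{cc95}). The point of the barrier being a sub-solution ($\mathcal{M}^-\varphi\ge 0$) on $Q_r$ and $\le -2$ on the \emph{larger} cube $Q_{3r}$ — emphasized in the Remark after Lemma~\ref{lem:barrier} — is precisely that it lets one propagate the estimate from a small cube to its dyadic parent: if $|\{u>M\}\cap Q|>(1-\mu)|Q|$ for a dyadic subcube $Q$ of $Q_{1/4}$, then the predecessor $\tilde Q$ satisfies $\inf_{\tilde Q}u\le M$, and one iterates. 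Rescaling each such cube to $Q_1$ via Lemma~\ref{lem:rescaled} (which shows the rescaled $F_s$ still satisfies \eqref{loclip2}, with $\|f_s\|_{L^n}\le t_0\|f\|_{L^n}\le\|f\|_{L^n}$ since $t_0<1$, and $\|\sigma_s\|_{L^q}\le t_0^{1-n/q}\|\sigma\|_{L^q}\le\|\sigma\|_{L^q}$, so all constants stay uniform — this uses $q>n$ critically) lets us reapply the base step at every scale. Summing the geometric series gives the $L^{p_0}$ bound.

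The main obstacle, and the place where the degeneracy genuinely bites, is verifying that in the base step the function $u+\varphi$ (after rescaling) really does inherit a usable strict-ellipticity-for-large-gradients condition of the form \eqref{strictell} — in particular keeping track of how the threshold $M_F$, the $\gamma_F u$ term, and the barrier's source $C_{\mathrm B}\xi$ combine, and checking that the gradient of the test jets on the contact set stays $\ge M_F$ so that \eqref{strictell} is applicable there rather than vacuous. This is a viscosity-solution bookkeeping issue: one must argue at the level of subjets of $\Gamma(u+\varphi)$, using that the contact points $x_i$ called by a point of $\mathcal{B}$ share the same gradient (as in Remark~\ref{rem:abp-structure} and Proposition~\ref{prop:combinaison}), so that smallness of $M_F$ and $\eps_0$ guarantees the relevant gradients exceed $M_F$. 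Once \eqref{eq:abp-structure} is secured for the perturbed function, the rest is the standard machinery.
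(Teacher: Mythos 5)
Your proposal follows essentially the same route as the paper: normalize by rescaling (Lemma~\ref{lem:rescaled}) so that $M_F$, $\gamma_F$, $\|f\|_{L^n}$, $\|\sigma\|_{L^q}$ are small and $\inf u\le 1$, prove the base measure estimate by applying the ABP estimate to $w=u+\varphi$ with the barrier of Lemma~\ref{lem:barrier} (raising the gradient threshold to $M_F+\eps_0$ so that \eqref{loclip2} applies to $u=w-\varphi$, and using properness to absorb $\gamma_F u$ via $\gamma_F M_{\mathrm B}$), propagate to all level sets by the Calder\'on--Zygmund decomposition, and integrate the tail to get the $L^{p_0}$ bound. The only quibble is a reversed implication in your description of the dyadic step — what one needs is that $|\{u>M^k\}\cap Q|>(1-\mu)|Q|$ forces the predecessor $\tilde Q$ to be contained in $\{u>M^{k-1}\}$, proved by contradiction (a point of $\tilde Q$ with $u\le M^{k-1}$ would, after rescaling, put a $\mu$-fraction of $Q$ in $\{u\le M^k\}$) — but this does not affect the soundness of the plan.
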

%-----------------
\begin{thm}[Local maximum principle]\label{thm:lmp}
  Given $q>n$ and a non-linearity $F$ satisfying {\rm (A)} and
  \eqref{loclip3} for some continuous functions $f$ and $\sigma$ on
  $Q_1$, consider a sub-solution $u$ of \eqref{eq:main} in $Q_1$.
  Then for any $p>0$,
\begin{equation}\label{eq:lmp}
\sup_{Q_{1/4}} u \le C(p) ( \| u^+\|_{L^p (Q_{1/2})} + \max (M_F,  \|f\|_{L^n(Q_1)} ) ) 
\end{equation}
where $C(p)$ is a constant (only) depending on 
$n$, $q$, $\lambda_F, \Lambda_F$, $\gamma_F$, $\|\sigma\|_{L^q (Q_1)}$  and $p$.
\end{thm}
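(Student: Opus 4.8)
The plan is to adapt the Krylov--Safonov--Caffarelli argument, using that the ABP estimate (Theorem~\ref{thm:abp}) and the small-gradient barrier (Lemma~\ref{lem:barrier}) are tailored to absorb the degeneracy threshold $M_F$. By Lemma~\ref{lem:rescaled}, replacing $u$ by $u/\tau$ (the case $u^+\equiv 0$ being trivial) transforms $F$ into a non-linearity that is still of type {\rm (A)} and still satisfies \eqref{loclip3}, now with $M_F$ and $f$ divided by $\tau$ and with $\gamma_F,\sigma$ unchanged; taking $\tau=\|u^+\|_{L^{p}(Q_{1/2})}+\max(M_F,\|f\|_{L^n(Q_1)})$ reduces the statement to showing that, if this quantity is $\le 1$, then $\sup_{Q_{1/4}}u\le C$, the constant $C$ being allowed to depend on $n,q,\lambda_F,\Lambda_F,\gamma_F,\|\sigma\|_{L^q(Q_1)}$ and $p$. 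The information extracted from the sub-solution property is: at every $x$ and for every $(\bar p,X)\in D^{2,+}u(x)$ with $|\bar p|\ge M_F$, condition \eqref{loclip3} gives $\mathcal{M}^-(X)\le \sigma(x)|\bar p|+f(x)-\gamma_F u(x)$, where the last term has the favourable sign wherever $u\ge 0$ — the only region relevant for bounding $\sup u$. This is what we feed into the barrier/ABP machinery.

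First I would prove the estimate for a single universal exponent $p_0>0$ (of the same order as the one in the weak Harnack inequality), namely
$$\sup_{Q_{1/4}}u\le C\,\Big(\|u^+\|_{L^{p_0}(Q_{1/2})}+\max(M_F,\|f\|_{L^n(Q_1)})\Big),$$
and, more usefully, its scale-invariant version on every concentric sub-cube $Q_\rho(x_0)\subset Q_{1/2}$. The core is a Krylov--Safonov type measure/growth estimate for the super-level sets of sub-solutions: sliding the barrier of Lemma~\ref{lem:barrier} against $u$ over a cube centred at a point $x_0$ where $u(x_0)$ is large, and applying the ABP estimate of Theorem~\ref{thm:abp} to $u$ minus a rescaled copy of the barrier on its contact set, yields the dichotomy \emph{either} $u$ exceeds $2u(x_0)$ somewhere in a geometrically smaller cube, \emph{or} $u$ stays above $\tfrac12 u(x_0)$ on a fixed fraction of the cube. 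The first alternative cannot recur indefinitely along a sequence of shrinking cubes that stay inside $Q_1$ (it would force $u\to+\infty$ at an interior point, contradicting upper semi-continuity), so the second alternative occurs at some bounded scale; integrating the resulting lower bound for $u^+$ over that cube contradicts the smallness of $\|u^+\|_{L^{p_0}(Q_{1/2})}$ once $p_0$ has been fixed small enough in terms of the contraction ratio (hence universal). Equivalently, this step may be run through the cube decomposition already used for the weak Harnack inequality, estimating the volumes of all level sets at once; in either presentation the $\max(M_F,\cdot)$ enters exactly through the $M_F$-term of \eqref{alextim}, and the dependence of $C$ on $\gamma_F$ and $\|\sigma\|_{L^q}$ enters through the ABP constant.

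Next I would pass from $p_0$ to an arbitrary $p>0$. For $p\ge p_0$ this is immediate from the case $p=p_0$ and Hölder's inequality on $Q_{1/2}$. For $0<p<p_0$ I would interpolate: from the scale-invariant core estimate one has, for all $Q_\rho(x_0)\subset Q_{1/2}$ and $\theta\in(0,1)$,
$$\sup_{Q_{\theta\rho}(x_0)}u\le \frac{C}{(\rho(1-\theta))^{n/p_0}}\,\|u^+\|_{L^{p_0}(Q_\rho(x_0))}+C\rho\,\max(M_F,\|f\|_{L^n(Q_1)});$$
writing $\|u^+\|_{L^{p_0}(Q_\rho(x_0))}^{p_0}\le (\sup_{Q_\rho(x_0)}u^+)^{p_0-p}\,\|u^+\|_{L^{p}(Q_\rho(x_0))}^{p}$, using Young's inequality to absorb $\tfrac12\sup_{Q_\rho(x_0)}u^+$ into the left-hand side, and applying the standard iteration lemma to the (finite, by upper semi-continuity on $\overline{Q_{1/2}}$) quantity $\rho\mapsto\sup_{Q_\rho(x_0)}u^+$ for $\rho\in[1/4,1/2]$ with $x_0$ the centre, one obtains \eqref{eq:lmp} with $C(p)$ of the asserted dependence.

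The main obstacle, and the only genuine departure from the classical case $M_F=0$, is that the equation is silent where $|Du|<M_F$, so the usual barrier/ABP step cannot be applied verbatim. This is precisely what the two structural features arranged beforehand take care of: the barrier of Lemma~\ref{lem:barrier} has gradient $\le\eps_0$, so it can be slid against $u$ without pushing the gradient below $M_F$ at the relevant touching points, and the ABP estimate of Theorem~\ref{thm:abp} is already insensitive to the behaviour of the convex envelope on $\{|D\Gamma|<M_F\}$ while producing the additive $M_F$-term. What remains is bookkeeping: carrying $M_F$ consistently through each rescaling via Lemma~\ref{lem:rescaled}, and controlling the zeroth-order term $\gamma_F u$, which — having the good sign on $\{u\ge0\}$ — costs only a factor depending on $\gamma_F$.
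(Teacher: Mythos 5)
Your proposal follows essentially the same route as the paper's proof: reduce by the scaling of Lemma~\ref{lem:rescaled} to a normalized situation, establish a growth lemma for the sub-solution by applying the barrier/ABP machinery of the weak Harnack inequality to a flipped (hence non-negative super-solution) version of $u$, iterate the growth lemma until it contradicts upper semi-continuity, and finally interpolate to pass from the universal exponent $p_0$ to an arbitrary $p>0$ (your interpolation step, via $\|u^+\|_{p_0}^{p_0}\le(\sup u^+)^{p_0-p}\|u^+\|_p^p$, Young's inequality and the standard iteration lemma, is in fact more detailed than the paper, which only says ``by interpolation''). Your alternative phrasing via the cube decomposition is exactly the paper's Lemma~\ref{lem:construction} combined with the level-set bound of Lemma~\ref{lem:wh level sets}.

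Two quantitative points in your ``dichotomy'' would fail as literally stated and need repair. First, the alternative ``either $u$ exceeds $2u(x_0)$ in a smaller cube, or $u\ge\tfrac12 u(x_0)$ on a fixed fraction'' is not what the measure estimate delivers: applying Lemma~\ref{lem:wh ub} to $w=(2u(x_0)-u)/u(x_0)$ only shows that $\{w\le M_{\mathrm{B}}\}=\{u\ge(2-M_{\mathrm{B}})u(x_0)\}$ occupies a fraction $\mu$ of the cube, and $M_{\mathrm{B}}>1$ is a universal constant coming out of the barrier construction that has no reason to be below $2$ (it is typically large), so the conclusion may be vacuous. The paper circumvents this by taking the growth factor per step equal to $\nu=M_0/(M_0-\tfrac12)$, close to $1$, and comparing the measures of $\{u\le\nu^jM_0/2\}$ and $\{u\ge\nu^jM_0/2\}$ on the cube $Q_{l_j}(x_0)$; with growth so close to $1$, a single integration of $u^{p_0}$ over one cube no longer suffices and one genuinely needs the level-set decay $|\{u\ge t\}\cap Q_r|\le dt^{-\eps}$ at every level. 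Second, the direction of your final choice is backwards: $p_0=\eps$ is dictated by the weak Harnack level-set estimate and cannot be ``fixed small enough in terms of the contraction ratio''; rather, the side-lengths $l_j\sim\nu^{-j\eps/n}$ of the successive cubes are chosen in terms of $\eps$ so that simultaneously $\sum_j l_j<\infty$ (the iterated points stay in $Q_{r/2}$) and the two measure bounds are incompatible. Both are repairs of constants rather than of ideas, and your stated fallback through the cube decomposition is precisely how the paper closes the argument.
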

%-----------------
Combining these two results, we obtain the second main result of this
paper.
%--------------------------------------------
\begin{cor}[Harnack inequality]\label{cor:hi}
  Given $q>n$ and a non-linearity $F$ satisfying {\rm (A)},
  \eqref{loclip2} and \eqref{loclip3} for some continuous functions
  $f$ and $\sigma$ on $Q_1$, consider a non-negative solution $u$ of
  \eqref{eq:main} in $Q_1$.  Then
\begin{equation}\label{eq:hi}
  \sup_{Q_{1/2}} u  \le C ( \inf_{Q_{1/2}} u + \max (M_F, \|f\|_{L^n(Q_1)}) ) 
\end{equation}
where $C$ is a constant (only) depending on $n$,
$q$, $\lambda_F, \Lambda_F$,$\gamma_F$    and $\|\sigma\|_{L^q (Q_1)}$.
\end{cor}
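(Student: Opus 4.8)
The plan is to deduce the Harnack inequality by combining the weak Harnack inequality (Theorem~\ref{thm:whi}) and the local maximum principle (Theorem~\ref{thm:lmp}), which is the standard Caffarelli-type argument. Since $u$ is a non-negative solution of \eqref{eq:main} in $Q_1$, it is in particular a non-negative super-solution, so Theorem~\ref{thm:whi} applies and yields
\begin{equation*}
\|u\|_{L^{p_0}(Q_{1/4})} \le C_1\,\bigl(\inf_{Q_{1/2}} u + \max(M_F, \|f\|_{L^n(Q_1)})\bigr)
\end{equation*}
for the universal exponent $p_0>0$. Simultaneously $u$ is a sub-solution, so Theorem~\ref{thm:lmp}, applied with the particular choice $p=p_0$, gives
\begin{equation*}
\sup_{Q_{1/4}} u \le C_2(p_0)\,\bigl(\|u^+\|_{L^{p_0}(Q_{1/2})} + \max(M_F, \|f\|_{L^n(Q_1)})\bigr),
\end{equation*}
and here $u^+ = u$ by non-negativity.

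First I would reconcile the two cubes: Theorem~\ref{thm:lmp} estimates $\sup_{Q_{1/4}}$ by an $L^{p_0}$-norm over $Q_{1/2}$, while Theorem~\ref{thm:whi} controls the $L^{p_0}$-norm over $Q_{1/4}$, so the two do not chain together directly on a single pair of cubes. The remedy is a standard covering/scaling argument: for any point $x_0$ and radius with $Q_{2\rho}(x_0)\subset Q_1$, one rescales $u$ via Lemma~\ref{lem:rescaled} so that $Q_{2\rho}(x_0)$ plays the role of $Q_1$; the rescaled non-linearity $F_s$ still satisfies \eqref{loclip2} and \eqref{loclip3} (with the transformed constants and the norms $\|f_s\|_{L^n}$, $\|\sigma_s\|_{L^q}$ controlled as in Lemma~\ref{lem:rescaled}, using $t_0 \le 1$ and $q>n$ to keep these bounded by the original ones up to universal factors), so both theorems apply on $Q_{2\rho}(x_0)$. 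Applying the local maximum principle on such a sub-cube and the weak Harnack inequality on a slightly larger one, and then covering $\overline{Q_{1/2}}$ by finitely many such cubes (a number depending only on $n$), one upgrades $\sup_{Q_{1/4}}$ to $\sup_{Q_{1/2}}$ while keeping the right-hand side of the form $\inf_{Q_{1/2}} u$ plus $\max(M_F,\|f\|_{L^n(Q_1)})$; a standard chaining of Harnack on overlapping balls connects the infima over the covering cubes to $\inf_{Q_{1/2}} u$.

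Concretely I would: (i) fix the scaling and check, via Lemma~\ref{lem:rescaled}, that the transformed data satisfy the hypotheses of Theorems~\ref{thm:whi} and \ref{thm:lmp} with constants that remain universal (this uses $\gamma_s = t_0^2\gamma_F \le \gamma_F$, $\|\sigma_s\|_{L^q} \le \|\sigma\|_{L^q}$ since $t_0^{1-n/q}\le 1$, and $\|f_s\|_{L^n}\le \|f\|_{L^n}$, $M_s \le M_F$ when $M_0$ is taken of order $1$); (ii) on each cube $Q$ of the covering, chain the two estimates through the common $L^{p_0}$-term to obtain $\sup_Q u \le C(\inf_{Q'} u + \max(M_F,\|f\|_{L^n}))$ for a comparable cube $Q'$; (iii) run the classical Harnack chain along a finite sequence of overlapping cubes joining any point of $Q_{1/2}$ to a point realizing (approximately) $\inf_{Q_{1/2}} u$, absorbing the finitely many constants. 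I expect the only genuinely delicate point to be bookkeeping in step~(i)–(ii): making sure that after every rescaling the constants $C$, $C(p)$ and the norms $\|\sigma_s\|_{L^q}$ stay bounded independently of the cube (so that the final constant is universal in the stated sense), and that the passage from the two different cube pairs in Theorems~\ref{thm:whi} and \ref{thm:lmp} to a single statement on $Q_{1/2}$ is handled cleanly by the covering. Everything else is routine.
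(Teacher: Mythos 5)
Your proposal is correct and follows essentially the same route as the paper, which simply asserts that the Harnack inequality is obtained by ``combining'' Theorem~\ref{thm:whi} and Theorem~\ref{thm:lmp} (with $p=p_0$) in the standard Caffarelli--Cabr\'e manner. The cube-mismatch issue you flag is real but is resolved exactly as you describe, by rescaling via Lemma~\ref{lem:rescaled} (which preserves the hypotheses since $t_0\le 1$ and $q>n$) and a finite covering/chaining argument, so nothing further is needed.
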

%---------
An important consequence of Corollary~\ref{cor:hi} is the following
regularity result.
%--------------------------------------------
\begin{cor}[Interior H\"older regularity]\label{cor:holder}
  Given $q>n$ and a non-linearity $F$ satisfying {\rm (A)},
  \eqref{loclip2} and \eqref{loclip3} for some continuous functions
  $f$ and $\sigma$ on $Q_1$, consider a solution $u$ of
  \eqref{eq:main} in $Q_1$.  Then $u$ is $\alpha$-H\"older continuous
  on $Q_{\frac12}$ and
\begin{equation}\label{eq:holder}
\sup_{\stackrel{x, y \in Q_{\frac12}}{ x \neq y}}
\frac{|u(x)-u(y)|}{|x-y|^\alpha} \le C_\alpha (\|u\|_{L^\infty (Q_1)}
+ \max(M_F,\|f\|_{L^n (Q_1)} + \gamma_F \|u\|_{L^\infty (Q_1)}))
\end{equation}
where $\alpha$ and $C_\alpha$ depend (only) on $n$, $q$, $\lambda_F, \Lambda_F$,
$\gamma_F$ and $\|\sigma\|_{L^q (Q_1)}$.
\end{cor}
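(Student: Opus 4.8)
\textbf{Proof strategy for Corollary~\ref{cor:holder}.}

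The plan is to deduce the interior H\"older estimate from the Harnack inequality (Corollary~\ref{cor:hi}) by the classical oscillation-decay argument of Krylov--Safonov, adapted to the present ``large gradient'' framework via the rescaling Lemma~\ref{lem:rescaled}. First I would reduce to the case $x_0 = 0$ and work on a sequence of concentric cubes $Q_{\rho_k}$ with $\rho_k = \rho^k$ for a suitable $\rho \in (0,1)$ to be chosen universal. Writing $M_k = \sup_{Q_{\rho_k}} u$ and $m_k = \inf_{Q_{\rho_k}} u$, the goal is to prove a geometric decay of the oscillation $\omega_k = M_k - m_k$, namely $\omega_{k+1} \le (1-\theta)\omega_k + C\,\rho^k\,\kappa$, where $\kappa = \max(M_F, \|f\|_{L^n(Q_1)} + \gamma_F\|u\|_{L^\infty(Q_1)})$ collects the inhomogeneous data and $\theta \in (0,1)$ is universal. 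Iterating this recurrence yields $\omega_k \le C(\rho^{\alpha k})(\|u\|_{L^\infty(Q_1)} + \kappa)$ for $\alpha$ small enough that $\rho^\alpha \ge 1-\theta$, which is exactly the claimed modulus of continuity once one interpolates between scales for an arbitrary pair $x,y$.

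The core of the argument is the one-step oscillation decay, obtained by applying Corollary~\ref{cor:hi} to the two non-negative functions $u - m_k$ and $M_k - u$ on the cube $Q_{\rho_k}$. For this I rescale via $T(y) = \rho_k\, y$ as in Lemma~\ref{lem:rescaled}, with $M_0 = \omega_k$ (or $M_0 = \omega_k + \kappa$ to keep the normalization harmless when $\omega_k$ is small): the rescaled functions $v^\pm$ live on $Q_1$, are non-negative solutions of rescaled equations $F_s = 0$, and by Lemma~\ref{lem:rescaled} the new structure functions satisfy $M_s = \rho_k M_F/M_0 \le M_F$, $\gamma_s = \rho_k^2 \gamma_F \le \gamma_F$, $\|\sigma_s\|_{L^q(Q_1)} = \rho_k^{1-n/q}\|\sigma\|_{L^q(Q_1)} \le \|\sigma\|_{L^q(Q_1)}$ and $\|f_s\|_{L^n(Q_1)} = (\rho_k/M_0)\|f\|_{L^n(Q_1)}$, so the Harnack constant stays \emph{universal} (uniform in $k$). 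Harnack applied to $v^+$ and $v^-$ gives
\begin{equation*}
\sup_{Q_{1/2}} v^\pm \le C\Big( \inf_{Q_{1/2}} v^\pm + \max(M_s, \|f_s\|_{L^n(Q_1)}) \Big),
\end{equation*}
and adding the two inequalities (after noting $\inf_{Q_{1/2}} v^+ + \inf_{Q_{1/2}} v^- \le \omega_{k+1}/M_0$ because $\sup v^+ + \sup v^- \ge (M_k - m_k)/M_0 = 1$... more precisely $\inf_{Q_{1/2}}v^+ + \inf_{Q_{1/2}}v^- = (M_k - m_k - \omega_{k+1})/M_0$ when $Q_{1/2}$ corresponds to $Q_{\rho_{k+1}}$, which forces $\rho = 1/2$) produces $1 \le C\big((\omega_k - \omega_{k+1})/M_0 + \text{error}\big)$ after unwinding, i.e. $\omega_{k+1} \le (1 - 1/C)\omega_k + C'\rho_k\,\kappa$. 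Scaling back through $M_0 = \omega_k + \kappa$ removes the degenerate case cleanly.

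I expect the main obstacle to be \emph{bookkeeping the inhomogeneous term} $\kappa$ correctly through the rescaling and iteration, in particular the $\max(M_F, \cdot)$ structure: the term $M_F$ does \emph{not} improve under rescaling (indeed $M_s \le M_F$ only, not $\le \rho_k M_F$), so the naive recurrence $\omega_{k+1} \le (1-\theta)\omega_k + C\rho_k \kappa$ would be \emph{false} if $M_F > 0$ at every scale. The resolution --- and this is the subtle point flagged already in the introduction (``we cannot expect more than Lipschitz continuity'') --- is that the dichotomy ``either $\omega_k \le C\kappa$ already, in which case we stop, or $\omega_k > C\kappa$, in which case the error term in the rescaled Harnack is dominated by $\theta\omega_k/2$'' must be run at each step; the geometric decay then only persists down to the scale where $\omega_k \sim \kappa$, and below that scale the oscillation is simply bounded by $C\kappa$. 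Assembling these two regimes gives the stated estimate with the $\max(M_F, \|f\|_{L^n} + \gamma_F\|u\|_{L^\infty})$ on the right-hand side. The final interpolation step (passing from the dyadic bound on $\omega_k$ to the pointwise H\"older seminorm for arbitrary $x \ne y$) is routine: given $x,y \in Q_{1/2}$ pick $k$ with $\rho^{k+1} < |x-y| \le \rho^k$ and use $|u(x)-u(y)| \le \omega_k$.
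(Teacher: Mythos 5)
Your overall strategy is the right one and is essentially the paper's: apply the Harnack inequality (Corollary~\ref{cor:hi}) to the two non-negative solutions $u-m_k$ and $M_k-u$ (the paper does a single explicit step from $Q_1$ to $Q_{1/2}$, getting $\mathrm{osc}_{Q_{1/2}}u\le\frac{C-1}{C+1}\mathrm{osc}_{Q_1}u+2\max(M_F,\|f\|_{L^n(Q_1)}+\gamma_F\|u\|_{L^\infty(Q_1)})$, and then delegates the iteration across scales to Lemma~8.3 of Gilbarg--Trudinger together with Lemma~\ref{lem:rescaled}). One point you gloss over but should state: shifting $u$ by the constant $m_k$ (resp.\ $M_k$) changes the zeroth-order term in \eqref{loclip2}--\eqref{loclip3}, so $f$ must be replaced by $f+\gamma_F m_k$ (resp.\ $f+\gamma_F M_k$); this is exactly where the $\gamma_F\|u\|_{L^\infty(Q_1)}$ in \eqref{eq:holder} comes from.

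There is, however, a genuine error in your third paragraph. You assert that ``$M_s\le M_F$ only, not $\le\rho_k M_F$,'' declare the natural recurrence false, and replace it by a stopping-time dichotomy. This contradicts Lemma~\ref{lem:rescaled} (and your own formula two sentences earlier): $M_s=t_0M_F/M_0$, so the gradient threshold \emph{does} carry the factor $t_0=\rho_k$, exactly like $\|f_s\|_{L^n}$. Consequently, after multiplying the rescaled Harnack inequality back by $M_0$, the inhomogeneous contribution at scale $\rho_k$ is
\begin{equation*}
M_0\,\max\bigl(M_s,\|f_s\|_{L^n(Q_1)}\bigr)\le \rho_k\,\max\bigl(M_F,\|f\|_{L^n(Q_1)}+\gamma_F\|u\|_{L^\infty(Q_1)}\bigr)=\rho_k\,\kappa ,
\end{equation*}
independently of the choice of $M_0$, and the ``naive'' recurrence $\omega_{k+1}\le(1-\theta)\omega_k+C\rho_k\kappa$ is in fact correct and closes to give $\omega_k\le C\rho^{\alpha k}(\|u\|_{L^\infty}+\kappa)$. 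Your substitute dichotomy is not merely unnecessary, it is insufficient: stopping the iteration once $\omega_k\lesssim\kappa$ only yields the modulus-of-continuity bound $\omega(r)\le C(r^\alpha\|u\|_{L^\infty}+\kappa)$, whose second term does not vanish as $r\to0$; dividing by $|x-y|^\alpha$ then gives an unbounded quantity for $|x-y|$ small, so it does not prove the H\"older \emph{seminorm} estimate \eqref{eq:holder}. The decay of the error term with the scale --- including the decay of $M_s$ --- is precisely what makes the corollary true as stated, and your proof must use it.
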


\section{Quadratic growth in $Du$}
\label{sec:extensions}

In this section, we extend the results of the previous section to
elliptic equations with a first order term (after changing the
original equation if necessary; see the Introduction) which can grow
quadratically with respect to the gradient. Precisely, \eqref{loclip2}
and \eqref{loclip3} are replaced with \eqref{loclip2-quad} and
\eqref{loclip3-quad}.

Through a Cole-Hopf transform, an immediate consequence of
Theorems~\ref{thm:whi} and \ref{thm:lmp} are the following
results. 
%-----------
\begin{thm}[Weak Harnack inequality] \label{thm:wh-quad}
  Given $q>n$ and a non-linearity $F$ satisfying {\rm (A)} and
  \eqref{loclip2-quad} for some continuous functions $f$ and $\sigma$ in
  $Q_1$, consider a non-negative super-solution $u$ of \eqref{eq:main}
  in $Q_1$.  Then
\begin{equation}\label{eq:whi-quad}
\|u\|_{L^{p_0} (Q_{1/4})} \le C ( \inf_{Q_{1/2}} u + \max (M_F, \|f\|_{L^n(Q_1)}) ) 
\end{equation}
where $p_0>0$ is universal and $C$ (only) depends on $\|u\|_{L^\infty(Q_1)}$, 
$n$, $q$, $\lambda_F,\Lambda_F$, $\gamma_F$ and $\|\sigma\|_{L^q (Q_1)}$. 
\end{thm}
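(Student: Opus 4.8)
\textbf{Proof plan for Theorem~\ref{thm:wh-quad}.}
The plan is to reduce the quadratically growing equation to one with at most linear growth in the gradient, so that Theorem~\ref{thm:whi} applies directly. The natural device is the Cole--Hopf (exponential) change of unknown: given a non-negative super-solution $u$ of \eqref{eq:main} in $Q_1$ satisfying \eqref{loclip2-quad}, set $v = \frac{1}{\sigma_2}(1 - e^{-\sigma_2 u})$ (or, if $\sigma_2=0$, there is nothing to prove since \eqref{loclip2-quad} reduces to \eqref{loclip2}). Then $v$ is again non-negative, $v$ and $u$ are comparable on $Q_1$ with constants depending only on $\sigma_2$ and $\|u\|_{L^\infty(Q_1)}$ (indeed $e^{-\sigma_2\|u\|_\infty} u \le v \le u$), and one checks that $v$ is a (viscosity) super-solution of a new equation $G(x,v,Dv,D^2v)=0$ obtained by substituting $u = -\frac{1}{\sigma_2}\ln(1-\sigma_2 v)$, $Du = \frac{Dv}{1-\sigma_2 v}$, $D^2u = \frac{D^2v}{1-\sigma_2 v} + \frac{\sigma_2\, Dv\otimes Dv}{(1-\sigma_2 v)^2}$ into $F$. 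The point of the transform is that the extra second-order term $\frac{\sigma_2}{(1-\sigma_2 v)^2}Dv\otimes Dv$ produced by the chain rule is a non-negative rank-one matrix, so when one feeds $D^2 u$ into the Pucci bound \eqref{loclip2-quad} it exactly cancels the bad $\sigma_2|Du|^2$ term: $\mathcal{M}^+$ applied to a sum is subadditive, and $\mathcal{M}^+$ of a scalar multiple of a rank-one matrix $Dv\otimes Dv$ is bounded below by $-\Lambda_F \sigma_2 |Dv|^2/(1-\sigma_2 v)^2$, which matches (up to the factor $(1-\sigma_2 v)$) the $-\sigma_2|Du|^2$ coming from \eqref{loclip2-quad}.

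The key verification, to be carried out carefully, is that $G$ satisfies Assumption~(A) and \eqref{loclip2} with new data: $M_G$, $\gamma_G$, and continuous functions $\sigma_G$, $f_G$ all controlled by $n$, $\lambda_F$, $\Lambda_F$, $\gamma_F$, $\sigma_2$, $\|\sigma\|_{L^q(Q_1)}$, $\|f\|_{L^n(Q_1)}$ and, crucially, $\|u\|_{L^\infty(Q_1)}$. Concretely, from $\{|Dv|\ge M_G,\ G(x,v,Dv,D^2v)\ge 0\}$ one recovers $\{|Du|\ge M_F,\ F(x,u,Du,D^2u)\ge 0\}$ (on $Q_1$ the factor $1-\sigma_2 v = e^{-\sigma_2 u}$ is bounded between positive constants, so the gradient threshold $M_F$ becomes a threshold $M_G = M_F(1-\sigma_2\|v\|_\infty)$ on $|Dv|$), then applies \eqref{loclip2-quad}, multiplies through by $(1-\sigma_2 v)>0$, uses subadditivity of $\mathcal{M}^+$ together with the rank-one cancellation just described, and bounds the remaining zeroth-order terms using $0\le u \le -\sigma_2^{-1}\ln(1-\sigma_2\|v\|_\infty)$ and $u\le v/(1-\sigma_2\|v\|_\infty)$ to absorb $\gamma_F u$ into a term of the form $\gamma_G v + (\text{const})$; the constant, having the dimensions of $f$, is folded into $f_G$. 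One must also check that the viscosity-solution property is preserved under this smooth, strictly monotone, invertible change of variable — this is standard (test functions transform accordingly), but since the equation may be singular when $|Du|$ is small, one should note that the transform is the identity to leading order near $u=0$ and does not create new singular behaviour.

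Having established that $v$ is a non-negative super-solution of $G=0$ in $Q_1$ with $G$ satisfying (A) and \eqref{loclip2}, Theorem~\ref{thm:whi} yields
\[
\|v\|_{L^{p_0}(Q_{1/4})} \le C\big(\inf_{Q_{1/2}} v + \max(M_G,\|f_G\|_{L^n(Q_1)})\big),
\]
with $p_0$ universal and $C$ depending on the allowed parameters plus $\|u\|_{L^\infty(Q_1)}$ through the data of $G$. Translating back via $e^{-\sigma_2\|u\|_\infty} u \le v \le u$ and $\inf v \le \inf u$, and noting $M_G \le M_F$ and $\|f_G\|_{L^n}\le C'(\|f\|_{L^n}+1)$ with $C'$ of the stated dependence, gives \eqref{eq:whi-quad} after adjusting constants. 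The main obstacle is bookkeeping rather than conceptual: one has to track precisely how the $L^\infty$ norm of $u$ enters every constant (it is genuinely unavoidable here, as recalled after \eqref{loclip3-quad} via \cite{trudinger,kt}), and one must handle the threshold $M_F$ on the gradient carefully, since the transform rescales the gradient by the bounded-but-non-constant factor $1-\sigma_2 v$ — this is why the statement only asserts the weak Harnack inequality with $\max(M_F,\|f\|_{L^n})$ on the right, exactly as in Theorem~\ref{thm:whi}. The analogous local maximum principle for \eqref{loclip3-quad} is obtained symmetrically (using $w = \frac{1}{\sigma_2}(e^{\sigma_2 u}-1)$ for a sub-solution), but only the weak Harnack statement is recorded here.
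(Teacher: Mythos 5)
Your route is the paper's: a Cole--Hopf change of unknown that converts \eqref{loclip2-quad} into \eqref{loclip2}, followed by an application of Theorem~\ref{thm:whi} and the two-sided comparison between $u$ and the transformed function $v$. The outline, the role of the rank-one correction coming from the chain rule, the bound $1/h'\le 1$ used to control $f$, and the way $\|u\|_{L^\infty}$ enters the constant are all as in the paper. However, the one computation on which the whole device rests is mis-stated. For a matrix $M\ge 0$ one has the exact identity $\mathcal{M}^+(M)=-\lambda_F\,\mathrm{Tr}\,M$ (the supremum of $-\mathrm{Tr}(AM)$ over $\lambda_F I\le A\le \Lambda_F I$ is attained at $A=\lambda_F I$); what you need in the subadditivity step
$$
\mathcal{M}^+(D^2u)\le h'(v)\,\mathcal{M}^+(D^2v)+\mathcal{M}^+\bigl(h''(v)\,Dv\otimes Dv\bigr)
= h'(v)\,\mathcal{M}^+(D^2v)-\lambda_F h''(v)|Dv|^2
$$
is precisely this upper bound with the constant $\lambda_F$, not the lower bound with $\Lambda_F$ that you invoke. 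Consequently the cancellation of $\sigma_2|Du|^2=\sigma_2 (h'(v))^2|Dv|^2$ requires $\lambda_F h''=\sigma_2 (h')^2$, whence $h(t)=\frac{\lambda_F}{\sigma_2}\ln\bigl(1-\frac{\sigma_2 t}{\lambda_F}\bigr)^{-1}$ and $v=\frac{\lambda_F}{\sigma_2}\bigl(1-e^{-\sigma_2 u/\lambda_F}\bigr)$. Your normalization $v=\frac1{\sigma_2}(1-e^{-\sigma_2 u})$ solves $h''=\sigma_2(h')^2$ instead, and leaves an uncancelled residue $\sigma_2(1-\lambda_F)(h')^2|Dv|^2$ whenever $\lambda_F<1$; this is still quadratic in $|Dv|$ and cannot be absorbed, so the argument as written fails unless $\lambda_F\ge 1$. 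The fix is only a recalibration of $h$, but since the exact cancellation is the entire content of the step, it must be done with the correct Pucci constant.

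A smaller slip: since $|Du|=|Dv|/(1-\sigma_2 v)\ge|Dv|$ (in your normalization), the correct threshold is simply $M_G=M_F$, i.e.\ $|Dv|\ge M_F$ already forces $|Du|\ge M_F$ everywhere; your choice $M_G=M_F(1-\sigma_2\|v\|_\infty)$ goes the wrong way, because at points where $v(x)<\|v\|_\infty$ one may have $|Dv|\ge M_G$ and yet $|Du|<M_F$, so \eqref{loclip2-quad} cannot be invoked there. With these two corrections the proof coincides with the paper's.
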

%---------
\begin{rem}
  As explained in \cite{trudinger,kt}, one cannot expect to get weak
  Harnack inequality for such non-linearities with a constant $C>0$
  which does not depend on a bound on $u$. 
\end{rem}
\begin{rem}
The constant $C$ can be
  written
$$
C = C_0  \frac{\frac{\sigma_2 \|u\|_{L^\infty (Q_1)} }{\lambda_F}}%
{ 1 - e^{-\frac{\sigma_2 \|u\|_{L^\infty (Q_1)} }{\lambda_F}}}
$$
where  $C_0$ (only) depends on 
$n$, $q$, $\lambda_F,\Lambda_F$, $\gamma_F$ and $\|\sigma\|_{L^q (Q_1)}$.
\end{rem}
%---------
\begin{thm}[Local maximum principle]\label{thm:lmp-quad}
  Given $q>n$ and a non-linearity $F$ satisfying {\rm (A)} and
  \eqref{loclip3-quad} for some continuous functions $f$ and $\sigma$ on
  $Q_1$, consider a sub-solution $u$ of \eqref{eq:main} in $Q_1$.
  Then for any $p>0$,
\begin{equation}\label{eq:lmp-quad}
\sup_{Q_{1/4}} u \le C ( \| u^+\|_{L^p (Q_{1/2})} + \max (M_F,  \|f\|_{L^n(Q_1)} ) ) 
\end{equation}
where $C$ (only) depends on $\|u\|_{L^\infty (Q_1)}$, 
$n$, $q$, $\lambda_F, \Lambda_F$, $\gamma_F$, $\|\sigma\|_{L^q (Q_1)}$  and $p$.
\end{thm}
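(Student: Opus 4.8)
The plan is to reduce the quadratic-growth situation to the linear-growth one already handled in Theorem~\ref{thm:lmp} by means of a Cole--Hopf (exponential) change of unknown, exactly as announced in the paragraph preceding the statement. Let $u$ be a sub-solution of \eqref{eq:main} in $Q_1$ satisfying \eqref{loclip3-quad}, and set $K=\|u\|_{L^\infty(Q_1)}$. Introduce $v = \psi(u)$ where $\psi$ is the increasing function $\psi(t) = \frac{\lambda_F}{\sigma_2}\bigl(1-e^{-\sigma_2 t/\lambda_F}\bigr)$ (or a suitable affine normalization thereof), chosen so that $\psi' = e^{-\sigma_2 u/\lambda_F}$ and $\psi'' = -\tfrac{\sigma_2}{\lambda_F}\psi'$. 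The first step is to check that $v$ is a sub-solution of a new fully nonlinear equation $\tilde F = 0$ which is of the class handled in Section~\ref{sec:harnack}: using the chain rule for subjets (if $(p,X)\in D^{2,+}u(x)$ then $(\psi'(u)p,\ \psi'(u)X + \psi''(u)p\otimes p)\in D^{2,+}v(x)$), one plugs $Du = (\psi')^{-1}Dv$ and the corresponding Hessian relation into \eqref{loclip3-quad}. The key algebraic point is that the ``bad'' term $+\sigma_2|Du|^2$, after multiplication by the positive factor $\psi'(u)$ coming from the Pucci operator's positive homogeneity, is exactly cancelled by the contribution of the $\psi''(u)p\otimes p$ term to $\mathcal M^-$; concretely $\mathcal M^-(\psi' X + \psi'' p\otimes p) = \psi'\mathcal M^-(X) + \psi''\mathcal M^-(p\otimes p)$ and $\mathcal M^-(p\otimes p) = -\Lambda_F|p|^2$ — here one must be slightly careful with the sign and may instead only get the one-sided inequality needed, which is all that matters for a sub-solution. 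The outcome is that $v$ satisfies \eqref{loclip3} in $Q_1$ with the \emph{same} $M_F$ essentially unchanged (since $\psi'$ is bounded above and below on $[-K,K]$ by constants depending only on $\sigma_2 K/\lambda_F$), with $\gamma_F$ and $\sigma$ replaced by comparable quantities, and with right-hand side $\tilde f$ satisfying $\|\tilde f\|_{L^n(Q_1)} \le C(\sigma_2 K/\lambda_F)\|f\|_{L^n(Q_1)}$.

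Granting this, the second step is a direct application of Theorem~\ref{thm:lmp} to $v$: for any $p>0$,
\begin{equation*}
\sup_{Q_{1/4}} v \le C(p)\bigl(\|v^+\|_{L^p(Q_{1/2})} + \max(M_F,\|\tilde f\|_{L^n(Q_1)})\bigr),
\end{equation*}
with $C(p)$ depending on $n,q,\lambda_F,\Lambda_F,\gamma_F,\|\sigma\|_{L^q(Q_1)}$ and $p$. The third and final step is to translate this back to $u$. Since $\psi$ is increasing and bi-Lipschitz on $[-K,K]$ with constants controlled by $\sigma_2 K/\lambda_F$, one has $v^+ \le \psi'(-K)\,u^+ \le e^{\sigma_2 K/\lambda_F} u^+$ pointwise (using $\psi(0)=0$ after normalization), hence $\|v^+\|_{L^p(Q_{1/2})} \le e^{\sigma_2 K/\lambda_F}\|u^+\|_{L^p(Q_{1/2})}$; and $\sup_{Q_{1/4}} u = \psi^{-1}(\sup_{Q_{1/4}} v) \le (\psi^{-1})'(0)\,\sup_{Q_{1/4}} v + (\text{linear correction})$, again with Lipschitz constant $\le e^{\sigma_2 K/\lambda_F}$. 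Absorbing all these exponential factors into the constant yields \eqref{eq:lmp-quad} with $C$ additionally depending on $\|u\|_{L^\infty(Q_1)}$ through the combination $\sigma_2\|u\|_{L^\infty(Q_1)}/\lambda_F$, as the remark after the weak Harnack statement predicts.

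The step I expect to be the main obstacle is the first one: verifying \emph{carefully} that the Cole--Hopf transform of a viscosity sub-solution is again a viscosity sub-solution of an equation in the required class, at the level of semijets and respecting the fact that \eqref{loclip3-quad} is only a structural implication (triggered by $|p|\ge M_F$ and $F\le 0$) rather than an equation. One must confirm that the threshold condition $|Dv|\ge \tilde M_F$ is implied by $|Du|\ge M_F$ under the change of variables (this forces $\tilde M_F$ to involve $\inf\psi'$), and that the $\mathcal M^-$ manipulation produces a genuine pointwise inequality of the form \eqref{loclip3} at every $x$ where $v$ is tested. A minor secondary point is that if the equation is singular ($M_F$ acts on $p\ne 0$) one should use the adapted notion of viscosity solution from \cite{bd2}, but since the transform is smooth and strictly monotone it does not create new zero-gradient pathologies. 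Once the transformed equation is correctly identified, the rest is the bookkeeping of Lipschitz constants sketched above.
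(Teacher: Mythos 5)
Your overall strategy (Cole--Hopf reduction to Theorem~\ref{thm:lmp}, followed by bi-Lipschitz bookkeeping) is exactly the paper's, and Steps 2 and 3 are sound. The genuine gap is in Step 1: the transform you chose has the wrong convexity for the \emph{sub-solution} case, and the cancellation you invoke does not occur. With $\psi(t)=\frac{\lambda_F}{\sigma_2}\bigl(1-e^{-\sigma_2 t/\lambda_F}\bigr)$ you have $\psi''<0$, so the rank-one correction $B=\psi''(u)\,p\otimes p$ is negative semidefinite; since the Pucci operators carry a minus sign, adding such a $B$ \emph{increases} both of them. The only usable one-sided bound for the upper estimate required by \eqref{loclip3} is $\mathcal{M}^-(\psi'X+B)\le\psi'\mathcal{M}^-(X)+\mathcal{M}^+(B)$, and here $\mathcal{M}^+(B)=|\psi''|\,\Lambda_F|p|^2>0$; even your formal term $\psi''\mathcal{M}^-(p\otimes p)=\psi''\cdot(-\Lambda_F|p|^2)=|\psi''|\Lambda_F|p|^2$ is positive. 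Hence the quadratic contribution becomes $\psi'\sigma_2|p|^2+|\psi''|\Lambda_F|p|^2$, strictly worse than before: nothing cancels. (Note also that $\mathcal{M}^-$ is not additive, so the identity you write down is not a legitimate starting point; only the sub/superadditivity inequalities are available, and with your sign of $\psi''$ they go the wrong way.)

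The fix is to use the \emph{convex} exponential for sub-solutions: $\psi(t)=\frac{\lambda_F}{\sigma_2}\bigl(e^{\sigma_2 t/\lambda_F}-1\bigr)$, so that $\psi''=\frac{\sigma_2}{\lambda_F}\psi'>0$, $B=\psi''p\otimes p\ge 0$ and $\mathcal{M}^+(B)=-\psi''\lambda_F|p|^2=-\sigma_2\psi'|p|^2$, which exactly cancels the term $+\psi'\sigma_2|p|^2$ produced by \eqref{loclip3-quad}. Observe that the cancellation uses the eigenvalue $\lambda_F$ of $\mathcal{M}^+$ on $p\otimes p$, not $\Lambda_F$, consistently with the ODE $\lambda_F h''=\sigma_2(h')^2$ that the paper writes for the change of unknown. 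Equivalently --- and this is what the paper's terse ``we proceed in the same way\dots we can assume without loss of generality that the solution is non-negative'' amounts to --- you may pass to $w=\|u\|_{L^\infty(Q_1)}-u$, which is a non-negative super-solution with \eqref{loclip2-quad}-type structure, and apply the concave transform already used for the weak Harnack inequality. With the corrected transform, the rest of your argument (the threshold $\tilde M_F$, the $e^{\pm\sigma_2\|u\|_{L^\infty}/\lambda_F}$ Lipschitz constants, and the application of Theorem~\ref{thm:lmp}) goes through as you describe.
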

%-----------------
\begin{rem}
The constant $C$ can be
  written
$$
C = C_0  \frac{\frac{\sigma_2 \|u\|_{L^\infty (Q_1)} }{\lambda_F}}%
{ 1 - e^{-\frac{\sigma_2 \|u\|_{L^\infty (Q_1)} }{\lambda_F}}}
$$
where  $C_0$ (only) depends on 
$n$, $q$, $\lambda_F,\Lambda_F$, $\gamma_F$, $\|\sigma\|_{L^q (Q_1)}$ and $p$.
\end{rem}
It is now easy to derive a Harnack inequality and an interior H\"older estimate. 
%--------------------------------------------
\begin{cor}[Harnack inequality]\label{cor:hi-quad}
  Given $q>n$ and a non-linearity $F$ satisfying {\rm (A)},
  \eqref{loclip2-quad} and \eqref{loclip3-quad} for some continuous functions
  $f$ and $\sigma$ on $Q_1$, consider a non-negative solution $u$ of
  \eqref{eq:main} in $Q_1$.  Then
\begin{equation}\label{eq:hi-quad}
  \sup_{Q_{1/2}} u  \le C ( \inf_{Q_{1/2}} u + \max (M_F, \|f\|_{L^n(Q_1)}) ) 
\end{equation}
where $C$ (only) depends on $\|u\|_{L^\infty(Q_1)}$, $n$,
$q$, $\lambda_F, \Lambda_F$,$\gamma_F$    and $\|\sigma\|_{L^q (Q_1)}$.
\end{cor}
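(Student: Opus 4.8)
The plan is to obtain Corollary~\ref{cor:hi-quad} exactly as Corollary~\ref{cor:hi} is obtained from Theorems~\ref{thm:whi} and \ref{thm:lmp}: by combining the quadratic weak Harnack inequality (Theorem~\ref{thm:wh-quad}) with the quadratic local maximum principle (Theorem~\ref{thm:lmp-quad}). Since $u$ is a solution it is at once a super- and a sub-solution, and being non-negative it satisfies $u^+=u$. I write $D=\max(M_F,\|f\|_{L^n(Q_1)})$. A useful preliminary remark is that in both theorems the constant is non-decreasing in the parameter $\|u\|_{L^\infty}$ (visible from the explicit form $C_0\,s/(1-e^{-s})$ with $s=\sigma_2\|u\|_{L^\infty}/\lambda_F$ recorded in the remarks after those theorems), so after any rescaling I may harmlessly replace $\|u\|_{L^\infty}$ on the relevant subcube by $\|u\|_{L^\infty(Q_1)}$.

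The first step is to promote Theorems~\ref{thm:wh-quad} and \ref{thm:lmp-quad} to arbitrary small cubes by means of Lemma~\ref{lem:rescaled} together with its immediate analogue for \eqref{loclip2-quad}--\eqref{loclip3-quad}, for which the coefficient $\sigma_2$ is scale invariant while $M_F$, $\|f\|_{L^n}$, $\gamma_F$ and $\|u\|_{L^\infty}$ only decrease under a contraction. Fix a universal $t\in(0,1/2)$, so that $Q_t(x_0)\subset Q_1$ for every $x_0\in\overline{Q_{1/2}}$. Rescaling $u$ from $Q_t(x_0)$ to $Q_1$ and applying Theorem~\ref{thm:wh-quad} yields (after scaling back and absorbing universal powers of $t$)
\[
\|u\|_{L^{p_0}(Q_{t/4}(x_0))}\le C\bigl(\inf_{Q_{t/2}(x_0)}u+D\bigr),
\]
while rescaling $u$ from $Q_{t/2}(x_0)$ to $Q_1$ and applying Theorem~\ref{thm:lmp-quad} with $p=p_0$ yields
\[
\sup_{Q_{t/8}(x_0)}u\le C\bigl(\|u\|_{L^{p_0}(Q_{t/4}(x_0))}+D\bigr).
\]
Using two different scales is precisely what makes the $L^{p_0}$-norms match on the same cube $Q_{t/4}(x_0)$; combining the two displays and noting $\inf_{Q_{t/2}(x_0)}u\le\inf_{Q_{t/8}(x_0)}u$ produces a Harnack inequality on a small cube,
\[
\sup_{Q_{t/8}(x_0)}u\le C\bigl(\inf_{Q_{t/8}(x_0)}u+D\bigr)\qquad\text{for all }x_0\in\overline{Q_{1/2}},
\]
with $C$ universal and monotone in $\|u\|_{L^\infty(Q_1)}$.

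The second step is a standard covering/chaining argument on $Q_{1/2}$. Since $u$ is continuous, pick $x^\star,y^\star\in\overline{Q_{1/2}}$ with $u(x^\star)=\sup_{Q_{1/2}}u$ and $u(y^\star)=\inf_{Q_{1/2}}u$, and join them, by convexity of $Q_{1/2}$, by points $x^\star=z_0,z_1,\dots,z_k=y^\star$ in $\overline{Q_{1/2}}$ with $|z_{i+1}-z_i|<t/16$ and $k\le N$, where $N$ depends only on $n$ and the universal $t$. Then $z_i\in Q_{t/8}(z_{i+1})$ and $Q_t(z_{i+1})\subset Q_1$, so the small-cube Harnack inequality gives $u(z_i)\le\sup_{Q_{t/8}(z_{i+1})}u\le C\bigl(\inf_{Q_{t/8}(z_{i+1})}u+D\bigr)\le C\bigl(u(z_{i+1})+D\bigr)$; iterating $k\le N$ times yields $\sup_{Q_{1/2}}u=u(z_0)\le C^N u(z_k)+C''D\le C'\bigl(\inf_{Q_{1/2}}u+D\bigr)$, which is exactly \eqref{eq:hi-quad} with $C'$ depending only on $n$, $q$, $\lambda_F$, $\Lambda_F$, $\gamma_F$, $\|\sigma\|_{L^q(Q_1)}$ and $\|u\|_{L^\infty(Q_1)}$.

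I do not expect a genuine obstacle here: the statement is a formal consequence of Theorems~\ref{thm:wh-quad} and \ref{thm:lmp-quad}. The two points that need attention are the scale bookkeeping that aligns the exponents and cubes of the two $L^{p_0}$-estimates (without which they would not combine), and checking that every constant produced by rescaling and by the finite chain stays universal, which is where the monotonicity in $\|u\|_{L^\infty(Q_1)}$ is used. An alternative route would be to derive Corollary~\ref{cor:hi-quad} directly from Corollary~\ref{cor:hi} via the same Cole--Hopf change of unknown that turns Theorems~\ref{thm:whi}--\ref{thm:lmp} into Theorems~\ref{thm:wh-quad}--\ref{thm:lmp-quad}, the two unknowns being comparable with constants depending on $\|u\|_{L^\infty(Q_1)}$; I would nonetheless prefer the direct combination above, since it mirrors the proof of Corollary~\ref{cor:hi}.
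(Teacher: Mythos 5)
Your proposal is correct and follows the same route as the paper: Corollary~\ref{cor:hi-quad} is obtained exactly as Corollary~\ref{cor:hi}, namely as the direct combination of the weak Harnack inequality (Theorem~\ref{thm:wh-quad}) and the local maximum principle (Theorem~\ref{thm:lmp-quad}), the paper leaving the standard rescaling and covering details implicit. Your bookkeeping of the cubes and of the monotone dependence of the constant on $\sigma_2\|u\|_{L^\infty(Q_1)}/\lambda_F$ supplies precisely those omitted details.
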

%---------
\begin{cor}[Interior H\"older regularity]\label{cor:holder-quad}
  Given $q>n$ and a non-linearity $F$ satisfying {\rm (A)},
  \eqref{loclip2-quad} and \eqref{loclip3-quad} for some continuous functions
  $f$ and $\sigma$ on $Q_1$, consider a solution $u$ of
  \eqref{eq:main} in $Q_1$.  Then $u$ is $\alpha$-H\"older continuous
  on $Q_{\frac12}$ and
\begin{equation}\label{eq:holder-quad}
\sup_{\stackrel{x, y \in Q_{\frac12}}{ x \neq y}}
\frac{|u(x)-u(y)|}{|x-y|^\alpha} \le C_\alpha (\|u\|_{L^\infty (Q_1)}
+ \max(M_F,\|f\|_{L^n (Q_1)} + \gamma_F \|u\|_{L^\infty (Q_1)}))
\end{equation}
where $\alpha$ and $C_\alpha$ depend (only) on
$\|u\|_{L^\infty(Q_1)}$, $n$, $q$, $\lambda_F, \Lambda_F$, $\gamma_F$
and $\|\sigma\|_{L^q (Q_1)}$.
\end{cor}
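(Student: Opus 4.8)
The plan is to deduce Corollary~\ref{cor:holder-quad} from the Harnack inequality of Corollary~\ref{cor:hi-quad} by the classical oscillation-decay argument, exactly as Corollary~\ref{cor:holder} is obtained from Corollary~\ref{cor:hi}. The one feature special to the quadratic setting is that the constant in Corollary~\ref{cor:hi-quad} depends on $\|u\|_{L^\infty(Q_1)}$, so I will have to check that this constant stays bounded as the argument is iterated on smaller and smaller cubes.

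First I would fix $x_0 \in Q_{1/2}$ and $r_0 = \tfrac18$ (so that $Q_{2\rho}(x_0) \subset Q_1$ for $\rho \le r_0$), and set $M(\rho) = \sup_{Q_\rho(x_0)} u$, $m(\rho) = \inf_{Q_\rho(x_0)} u$, $\omega(\rho) = M(\rho) - m(\rho)$; this is non-decreasing in $\rho$ and $\omega(r_0) \le 2\|u\|_{L^\infty(Q_1)}$. The next step is to observe that $v = u - m(\rho) \ge 0$ and $w = M(\rho) - u \ge 0$ on $Q_\rho(x_0)$ both satisfy, in the viscosity sense, the two differential inequalities \eqref{loclip2-quad} and \eqref{loclip3-quad} with $M_F$, $\gamma_F$, $\sigma$, $\sigma_2$ unchanged and $f$ replaced by $\tilde f := |f| + \gamma_F\|u\|_{L^\infty(Q_1)}$. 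For $v$ this is immediate; for $w$ it follows from the sub-/super-solution inequalities for $u$, the identity $\mathcal{M}^-(X) = -\mathcal{M}^+(-X)$, and the fact that the translation constants $\pm\gamma_F m(\rho)$, $\pm\gamma_F M(\rho)$ are dominated in absolute value by $\gamma_F\|u\|_{L^\infty(Q_1)}$ (and that both \eqref{loclip2-quad} and \eqref{loclip3-quad} are preserved when $f$ is increased pointwise). As in Remark~\ref{rem:abp-structure}, the proofs of Theorems~\ref{thm:wh-quad} and \ref{thm:lmp-quad}, hence of Corollary~\ref{cor:hi-quad}, only use these two viscosity inequalities, so the conclusion of Corollary~\ref{cor:hi-quad} applies to both $v$ and $w$.

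Then I would rescale $Q_\rho(x_0)$ to $Q_1$ via $y \mapsto x_0 + \rho y$ with $M_0 = 1$ in Lemma~\ref{lem:rescaled} (and the analogous rule for the quadratic term, under which $\sigma_2$ is unchanged when $M_0 = 1$): the rescaled $v_s, w_s \ge 0$ on $Q_1$ satisfy the same inequalities with $M_s = \rho M_F$, $\gamma_s = \rho^2\gamma_F \le \gamma_F$, $\|\sigma_s\|_{L^q(Q_1)} = \rho^{1-n/q}\|\sigma\|_{L^q(Q_\rho(x_0))} \le \|\sigma\|_{L^q(Q_1)}$ (here $q > n$ is used), $\sigma_2$ unchanged, and $\|\tilde f_s\|_{L^n(Q_1)} = \rho\,\|\tilde f\|_{L^n(Q_\rho(x_0))} \le \rho(\|f\|_{L^n(Q_1)} + \gamma_F\|u\|_{L^\infty(Q_1)})$; and crucially $\|v_s\|_{L^\infty(Q_1)} = \|w_s\|_{L^\infty(Q_1)} = \omega(\rho) \le 2\|u\|_{L^\infty(Q_1)}$. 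Since the constant in Corollary~\ref{cor:hi-quad} is $C_0\,t/(1-e^{-t})$ with $t = \sigma_2\|v_s\|_{L^\infty(Q_1)}/\lambda_F$, which is non-decreasing in the $L^\infty$-bound, it is bounded uniformly in $\rho$ by some $C \ge 1$ depending only on $n, q, \lambda_F, \Lambda_F, \gamma_F, \|\sigma\|_{L^q(Q_1)}$ and $\|u\|_{L^\infty(Q_1)}$. Applying Corollary~\ref{cor:hi-quad} to $v_s$ and to $w_s$ on $Q_1$ and undoing the scaling, with $K_0 := \max(M_F,\ \|f\|_{L^n(Q_1)} + \gamma_F\|u\|_{L^\infty(Q_1)})$, I get
\begin{equation*}
M(\rho/2) - m(\rho) \le C\big(m(\rho/2) - m(\rho) + \rho K_0\big), \qquad M(\rho) - m(\rho/2) \le C\big(M(\rho) - M(\rho/2) + \rho K_0\big),
\end{equation*}
and adding these (the left-hand sides sum to $\omega(\rho) + \omega(\rho/2)$, the right-hand sides to $C(\omega(\rho) - \omega(\rho/2) + 2\rho K_0)$) yields $\omega(\rho/2) \le \theta\,\omega(\rho) + C'\rho K_0$ with $\theta = (C-1)/(C+1) \in [0,1)$ and $C' = 2C/(C+1) \le 2$.

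Finally, a standard iteration lemma (e.g.\ \cite{cc95} or \cite[Lemma~8.23]{gt}) turns this into $\omega(\rho) \le C_\alpha\rho^\alpha(\omega(r_0) + K_0) \le C_\alpha\rho^\alpha(\|u\|_{L^\infty(Q_1)} + K_0)$ for all $\rho \le r_0$ and all $x_0 \in Q_{1/2}$, with $\alpha \in (0,1)$ depending only on $\theta$, hence only on the data listed above. To conclude \eqref{eq:holder-quad} I would distinguish, for $x,y \in Q_{1/2}$, the case $|x-y| \le r_0$ (where $y \in Q_{2|x-y|}(x)$, so $|u(x)-u(y)| \le \mathrm{osc}_{Q_{2|x-y|}(x)} u \le C_\alpha(2|x-y|)^\alpha(\|u\|_{L^\infty(Q_1)} + K_0)$) from the case $|x-y| > r_0$ (where $|u(x)-u(y)| \le 2\|u\|_{L^\infty(Q_1)} \le 2r_0^{-\alpha}|x-y|^\alpha\|u\|_{L^\infty(Q_1)}$). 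I expect the main obstacle to be twofold: the sign bookkeeping needed to see that $M(\rho) - u$ satisfies the two inequalities \eqref{loclip2-quad}, \eqref{loclip3-quad}, and --- more substantively --- keeping the quadratic Harnack constant uniform in $\rho$, which is exactly what the elementary bound $\omega(\rho) \le 2\|u\|_{L^\infty(Q_1)}$ secures.
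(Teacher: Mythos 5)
Your proposal is correct and follows essentially the same route the paper takes: the oscillation-decay argument of the proof of Corollary~\ref{cor:holder} (apply the Harnack inequality to $u-m$ and $M-u$, add, and iterate via the standard lemma from \cite{gt}), transplanted to the quadratic case via Corollary~\ref{cor:hi-quad}. You also correctly identify and resolve the one point the paper leaves implicit when it says the quadratic H\"older estimate is ``easy to derive'', namely that the $\|u\|_{L^\infty}$-dependent Harnack constant stays uniform along the iteration because $\mathrm{osc}_{Q_\rho(x_0)}u\le 2\|u\|_{L^\infty(Q_1)}$ and $\sigma_2$ is invariant under the rescaling with $M_0=1$.
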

%---------------------

\section{Applications: results of \cite{bd,dfq-cras}}
\label{sec:comparison}

In \cite{bd,dfq-cras}, Eq.~\eqref{eq homog} is considered. 
In \cite{bd}, $\alpha$ lies in $[0,1)$ and in \cite{dfq-cras},
$\alpha > -1$. They assume 
\paragraph{Assumption (H)}
\begin{itemize}
\item (H1) $F_0(tp,\mu X) = |t|^\alpha \mu F_0(p,X)$ for $t \neq 0$ and $\mu \ge 0$ for some $\alpha > -1$;
\item (H2) $|p|^\alpha \mathcal{M}^- (N) \le F_0(p, M+N) - F_0 (p,M) \le |p|^\alpha 
\mathcal{M}^+ (N)$.
\end{itemize}

The ABP estimate obtained in \cite{dfq-cras} is the following one
%-------------------------------------------------------------------------------
\begin{thm}[{\cite[Theorem~1]{dfq-cras}}]\label{thm:dfq}
Under Assumption~(H) and $c \ge 0$, any super-solution of \eqref{eq homog}
satisfies 
\begin{equation}\label{eq:dfq}
\sup_{B_d} u^- \le \sup_{\partial B_d} u^- + Cd 
\left( \left( \int_{B_d \cap \{ u + 
  M_\partial = \Gamma  (u)\}} (f_0^+)^n \right)^{1/n} \right)^{\frac1{1+\alpha}} 
\end{equation}
where $M_\partial= \sup_{\partial B_d} u^-$, $\Gamma (u) $ is the
convex hull of $\min (u+M_\partial,0)$ extended by $0$ on $B_{2d}$ and
$C$ is a constant (only) depending on $\|b\|_{L^n (B_d)}$, $n$,
$\alpha$ and $\|c\|_\infty$.
\end{thm}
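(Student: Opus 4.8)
The plan is to \emph{derive Theorem~\ref{thm:dfq} from Theorem~\ref{thm:abp}} by showing that Eq.~\eqref{eq homog}, under Assumption~(H) and $c\ge0$, satisfies our structure condition~\eqref{strictell} with a particular choice of $\lambda_F$, $M_F$, $g$ and $\sigma$ — but with the twist that, because of the homogeneity of order $\alpha$ in the gradient, the natural ellipticity estimate is nonlinear in $|p|$, so we cannot apply Theorem~\ref{thm:abp} directly with a fixed $M_F$. First I would record that if $X\ge0$, $p\neq0$ and $F_0(p,X)+b(x)\cdot p|p|^\alpha+cu|u|^\alpha+f_0(x)\ge0$, then (H1)–(H2) and $c\ge0$, $u\le0$ give $|p|^\alpha\mathcal M^+(X)\ge -|p|^\alpha\mathcal M^-(-X)$ ... more precisely $-|p|^\alpha\lambda_F\operatorname{tr}(X)\le F_0(p,X)\le |b(x)||p|^{1+\alpha}+f_0^+(x)$, hence
\[
\lambda_F\operatorname{tr}(X)\le |b(x)|\,|p| + f_0^+(x)\,|p|^{-\alpha}.
\]
This is the analogue of~\eqref{eq:abp-structure}; the presence of $|p|^{-\alpha}$ is what produces the exponent $\tfrac1{1+\alpha}$ in~\eqref{eq:dfq}.

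Next I would \emph{rerun the proof of Theorem~\ref{thm:abp}} with this modified pointwise inequality rather than invoke its statement verbatim. As in Remark~\ref{rem:abp-structure}, the only place the structure of $F$ enters is Lemma~\ref{lem:2}(2), which now reads $\lambda_F D^2\Gamma(u)(x)\le\{|b(x)|\,|D\Gamma(u)(x)| + f_0^+(x)\,|D\Gamma(u)(x)|^{-\alpha}\}I$ a.e.\ on $\mathcal B\cap\{u+M_\partial=\Gamma(u)\}$; Lemmata~\ref{lem:1} and~\ref{lem:3} are unchanged (and here $M_F=0$, so $\mathcal B=\{x\in B_d:D\Gamma(u)(x)\neq0\}$ and~\eqref{alextim-weak} becomes $B_{M/(3d)}(0)\setminus\{0\}\subset D\Gamma(u)(\mathcal B)$). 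Then I would apply the area formula to $D\Gamma(u):\mathcal B\to\R^n$ exactly as in the sketch, but with the test function $g(p)=(|p|^{n/(n-1)}+\mu^{n/(n-1)})^{1-n}$ and with $\mu$ chosen to absorb the $f_0^+|p|^{-\alpha}$ term. Expanding $\det D^2\Gamma(u)\le\lambda_F^{-n}(|b|\,|D\Gamma(u)| + f_0^+\,|D\Gamma(u)|^{-\alpha})^n$ and using $(a+b)^n\le 2^{n-1}(a^n+b^n)$, the integral splits into a term controlled by $\|b\|_{L^n(B_d)}$ (as before) and a term of the form $\mu^{-n}\int g(D\Gamma(u))|D\Gamma(u)|^{-n\alpha}(f_0^+)^n$. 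Choosing $\mu^n=\big(\int_{\mathcal B\cap\{u+M_\partial=\Gamma(u)\}}(f_0^+)^n\big)$ — i.e.\ the \emph{same} $\mu$ as in the sketch — and bounding $g(p)|p|^{-n\alpha}$ from below appropriately gives, after the change of variables $r=|p|$, an estimate of the shape
\[
c_n\int_{0}^{M/(3d)}\frac{r^{n-1}\,dr}{(r^{1+\alpha}+\mu)^{n}} \;\le\; \lambda_F^{-n}\big(1+\|b\|_{L^n(B_d)}^n\big)+C\|c\|_\infty\cdots,
\]
and the left-hand integral behaves like $\big((M/(3d))/\mu^{1/(1+\alpha)}\big)^{\text{something}}$, from which~\eqref{eq:dfq} follows by isolating $M=(\sup_{B_d}u^- - \sup_{\partial B_d}u^-)^+$.

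The zeroth-order term $cu|u|^\alpha$ with $c\ge0$ is handled as in the classical ABP proof: since $u+M_\partial\le0$ on the contact set and $c\ge0$, the term $cu|u|^\alpha$ has a favorable sign after replacing $u$ by $u+M_\partial$ on $\{u+M_\partial=\Gamma(u)\}$, or is absorbed into a constant depending on $\|c\|_\infty$ exactly as $\gamma_F$ was handled; I would make this precise by the standard device of comparing with the equation satisfied by $u+M_\partial$ on $B_d$.

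The main obstacle I expect is \emph{bookkeeping the non-integer power $|p|^{-\alpha}$ through the area-formula computation}: for $\alpha\in(-1,0)$ (singular case) the factor $|p|^{-\alpha}\to0$ as $|p|\to0$, which is harmless, but for $\alpha\in(0,1)$ (degenerate case) it blows up at the origin and one must check that $g(p)|p|^{-n\alpha}$ is still integrable near $0$ in $\R^n$ — this forces a slightly different choice of exponent in $g$, or equivalently a direct estimate $g(p)|p|^{-n\alpha}\ge c_n(|p|^{1+\alpha}+\mu)^{-n}$, and one must verify the resulting one-dimensional integral $\int_0^{M/(3d)}r^{n-1}(r^{1+\alpha}+\mu)^{-n}\,dr$ diverges (as $\mu\to0$) at the right rate to yield the exponent $\tfrac1{1+\alpha}$. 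A secondary subtlety is that the singular equation requires the adapted notion of viscosity solution of~\cite{bd2}, so one must confirm that~\eqref{eq:abp-structure}-type test-function arguments remain valid in that framework, which they do because they only ever evaluate $F_0$ at jets with nonzero gradient (by the definition of $\mathcal B$).
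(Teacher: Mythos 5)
Your overall strategy is sound and, for the singular range $\alpha\in(-1,0)$, it is exactly what the paper does: there one cannot fit \eqref{eq homog} into \eqref{strictell} with a fixed right-hand side $f$, so one reruns the area-formula computation of Theorem~\ref{thm:abp} with the pointwise inequality $\lambda_F\,\mathrm{tr}(X)\le |b(x)|\,|p|+f_0^+(x)|p|^{-\alpha}$ and a kernel $g$ adapted to the extra power of $|p|$. Where you diverge from the paper is in the degenerate range $\alpha\ge0$: the paper does \emph{not} reopen the proof of Theorem~\ref{thm:abp} there. It observes that for \emph{every} $M_F>0$ condition \eqref{strictell} holds with $\sigma=|b|$ and $g(x,u)=(f_0+cu|u|^\alpha)^+/M_F^\alpha$ (because $|p|^{-\alpha}\le M_F^{-\alpha}$ on $\{|p|\ge M_F\}$ when $\alpha\ge0$, and $c\ge0$ kills the zeroth-order term where $u\le0$), applies \eqref{alextim} as a black box to get $\sup_{B_d}u^-\le\sup_{\partial B_d}u^-+Cd\,(M_F+M_F^{-\alpha}\|f_0^+\|_{L^n})$, and then optimizes over $M_F>0$; the minimization of $M_F\mapsto M_F+AM_F^{-\alpha}$ is precisely what produces the exponent $\frac1{1+\alpha}$. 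That route is shorter and reuses the theorem verbatim, at the price of working only for $\alpha\ge0$; your unified kernel modification treats both signs of $\alpha$ at once but forces you to redo the bookkeeping inside the ABP proof.

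On that bookkeeping, one concrete correction. With $\mu^n=\int(f_0^+)^n$, the kernel must carry the prefactor $|p|^{n\alpha}$, e.g. $g(p)=|p|^{n\alpha}(|p|^{1+\alpha}+\mu)^{-n}$ (up to the usual $n/(n-1)$ exponents, as in the paper's choice for $\alpha<0$), so that $g(p)|p|^{-n\alpha}\le\mu^{-n}$ and $g(p)|p|^{n}\le1$, which is what collapses the right-hand side to $C\lambda_F^{-n}(1+\|b\|_{L^n}^n)$. The one-dimensional integral on the left is then
\begin{equation*}
\int_0^{M/(3d)} r^{n(1+\alpha)-1}\bigl(r^{1+\alpha}+\mu\bigr)^{-n}\,dr \;=\;\frac1{1+\alpha}\int_0^{(M/(3d))^{1+\alpha}} s^{n-1}(s+\mu)^{-n}\,ds\;\gtrsim\;\ln\frac{(M/(3d))^{n(1+\alpha)}+\mu^n}{\mu^n},
\end{equation*}
and \eqref{eq:dfq} follows; whereas the integral you display, $\int_0^{M/(3d)}r^{n-1}(r^{1+\alpha}+\mu)^{-n}dr$, remains bounded as $M\to\infty$ when $\alpha>0$ (its divergence as $\mu\to0$ comes from $r=0$, not from the upper limit), so it cannot be used to bound $M$. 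Since you explicitly flag that the exponent in $g$ must be adjusted and the rate of divergence verified, I regard this as a repairable slip rather than a gap; the remaining points of your argument, including the favorable sign of $cu|u|^\alpha$ on the contact set and the remark that only jets with $|p|>0$ are ever tested, agree with the paper.
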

%-------------------------------------------------------------------------------
D\'avila, Felmer and Quaas pointed out to us that it can be obtained from ours.
See below. 

The Harnack inequality obtained in \cite{bd} is the following one
%------------------------------------------------------------------
\begin{thm}[{\cite[Theorems~3.1 and 3.2]{bd}}]\label{thm:bd}
Under Assumption~(H) and $c \ge 0$, any non-negative solution of \eqref{eq homog}
satisfies 
\begin{equation}\label{hibd}
\sup_{B} u \le C (\inf_{B} u + \| f_0 \|^{\frac1{1+\alpha}}_{L^n (B)}) \,.
\end{equation}
where $C$ is a constant (only) depending on $n$,
$q$, $\lambda_F, \Lambda_F$, $\|c\|_\infty$, $\alpha$    and $\|b\|_{L^q (Q_1)}$.
\end{thm}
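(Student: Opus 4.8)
I would deduce \eqref{hibd} by combining the weak Harnack inequality (Theorem~\ref{thm:whi}) and the local maximum principle (Theorem~\ref{thm:lmp}), applied — after a Krylov--Safonov-type dilation — on small balls, together with a chaining argument. The point of entry is that Assumption~(H), once the equation is divided by $|Du|^\alpha$, puts \eqref{eq homog} into the framework of \eqref{loclip2}--\eqref{loclip3}, the exponent $\tfrac1{1+\alpha}$ arising from the homogeneity of $F_0$ in the gradient. Throughout, $u\ge0$ is the solution, an affine change of variables replaces the ball $B$ by the cube $Q_1$, and the constants will depend only on $n$, $q$, $\lambda_F,\Lambda_F$, $\alpha$, $\|c\|_{L^\infty}$ and $\|b\|_{L^q(Q_1)}$.

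First I would translate Assumption~(H). Taking $\mu=0$ in (H1) gives $F_0(p,0)=0$ for $p\neq0$; then (H2) with $M=0$ gives $|p|^\alpha\mathcal{M}^-(X)\le F_0(p,X)\le|p|^\alpha\mathcal{M}^+(X)$, and (H2) with $N\ge0$ gives the degenerate ellipticity of $F_0$; since $c\ge0$, the operators below are proper and continuous off $\{p=0\}$, so {\rm (A)} holds with $M_F$ equal to any $m>0$. Because $u\ge0$ and $c\ge0$, $u$ is a viscosity sub-solution of $\hat F(x,p,X):=F_0(p,X)+b(x)\cdot p\,|p|^\alpha+f_0(x)=0$ (one simply discards the non-negative term $cu|u|^\alpha$); dividing $\hat F\le0$ by $|p|^\alpha>0$ and using the Pucci bound, $b\cdot p\ge-|b|\,|p|$ and $|p|^{-\alpha}\le m^{-\alpha}$ for $|p|\ge m$ (here $\alpha\in[0,1)$ is used) shows $\hat F$ satisfies \eqref{loclip3} with $\sigma=|b|$, $\gamma_F=0$, $f=m^{-\alpha}|f_0|$. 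Symmetrically, $u$ is a super-solution of $F(x,r,p,X):=F_0(p,X)+b(x)\cdot p\,|p|^\alpha+c\,\widetilde r|\widetilde r|^\alpha+f_0(x)=0$, where $\widetilde r=\mathrm{sign}(r)\min(|r|,\|u\|_{L^\infty(Q_1)})$ (the truncation does not affect $u$); dividing $F\ge0$ by $|p|^\alpha$, using $b\cdot p\le|b|\,|p|$ and $|\widetilde r|\widetilde r|^\alpha|\le\|u\|_{L^\infty(Q_1)}^{1+\alpha}$ shows $F$ satisfies \eqref{loclip2} with $\sigma=|b|$, $\gamma_F=0$ and $\|f\|_{L^n(Q_1)}\le m^{-\alpha}\bigl(\|f_0\|_{L^n(Q_1)}+\|c\|_{L^\infty}\|u\|_{L^\infty(Q_1)}^{1+\alpha}\bigr)$ (recall $|Q_1|=1$).

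I would then apply Theorem~\ref{thm:lmp} to $u$ as a sub-solution of $\hat F$ with $p=p_0$, and Theorem~\ref{thm:whi} to $u$ as a super-solution of $F$, choosing in each case the free threshold $m$ so as to balance $\max(m,\|f\|_{L^n(Q_1)})$ — which costs the power $\tfrac1{1+\alpha}$, by subadditivity of $t\mapsto t^{1/(1+\alpha)}$ — and combining the two estimates (iterating over dyadic sub-cubes to match domains, exactly as in the proof of Corollary~\ref{cor:hi}); since $\gamma_F=0$ throughout, the resulting constant does \emph{not} depend on $\|u\|_{L^\infty}$, and one gets
\[
\sup_{Q_{1/4}}u\le C\Bigl(\inf_{Q_{1/2}}u+\|f_0\|_{L^n(Q_1)}^{1/(1+\alpha)}\Bigr)+C\,\|c\|_{L^\infty}^{1/(1+\alpha)}\,\|u\|_{L^\infty(Q_1)}.
\]
If $C\|c\|_{L^\infty}^{1/(1+\alpha)}<1$ the last term is absorbed by a standard iteration on shrinking cubes. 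In general one first rescales: $u_\rho(x)=\rho^{-(\alpha+2)/(\alpha+1)}u(\rho x)$ solves an equation of the same type with $c$ replaced by $\rho^{\alpha+2}c$ (and $b$ by $\rho\,b(\rho\,\cdot)$, $f_0$ by $f_0(\rho\,\cdot)$), so on a ball $B_\rho$ with $\rho$ small (depending on $\|c\|_{L^\infty}$, $\|b\|_{L^q}$, $n$, $q$, $\alpha$) the corresponding coefficient is admissible; applying the displayed estimate on $B_\rho$ and chaining the local Harnack inequalities so obtained along a covering of $Q_{1/2}$ — the number of links, and the harmless factor $\rho=\rho^{(\alpha+2)/(\alpha+1)-1/(1+\alpha)}$ multiplying the forcing, being absorbed into the final constant — yields \eqref{hibd}.

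The one genuinely delicate ingredient, and the place where this derivation differs only cosmetically from the argument of \cite{bd}, is the zeroth-order term $cu|u|^\alpha$: it is homogeneous of degree $\alpha\neq1$ in $u$, whereas \eqref{loclip2}--\eqref{loclip3} allow only a contribution linear in $u$. For a sub-solution this is harmless, since $cu|u|^\alpha\ge0$ may simply be dropped; for a super-solution it must be carried inside the forcing, which is what produces the term $\|c\|_{L^\infty}^{1/(1+\alpha)}\|u\|_{L^\infty}$ above, and the Krylov--Safonov dilation adapted to the homogeneity of \eqref{eq homog}, combined with the chaining, is precisely what disposes of it while keeping the final constant dependent only on $\|c\|_{L^\infty}$ (and the universal data). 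Everything else — the passage from $B$ to $Q_1$, the dyadic iterations that match domains in Theorems~\ref{thm:whi}--\ref{thm:lmp}, and the bookkeeping of the scaling factors — is routine.
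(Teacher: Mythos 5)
Your reduction of Assumption~(H) to the structure conditions \eqref{loclip2}--\eqref{loclip3} --- dividing by $|p|^\alpha\ge M_F^\alpha$, taking $\sigma=|b|$, and optimizing over the free threshold $M_F$ to produce the exponent $\tfrac1{1+\alpha}$ --- is exactly the mechanism the paper uses (it is spelled out there for the ABP estimate of Theorem~\ref{thm:dfq}), and for $c=0$ your argument is complete and matches the paper's intent: \eqref{hibd} then follows from Corollary~\ref{cor:hi} with $\gamma_F=0$. Note, however, that the paper is explicit that the case $c\neq0$ ``can also be treated but it requires to modify proofs'': it is \emph{not} claimed to follow from Theorems~\ref{thm:whi} and \ref{thm:lmp} as stated, and no proof of that case is given.

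The gap is precisely in your treatment of $c\neq0$. Since $u^{1+\alpha}$ cannot be bounded by $\gamma u+C$ uniformly in $u$ when $\alpha>0$, you are forced to put $cu|u|^\alpha$ into the forcing, producing the term $C\|c\|_\infty^{1/(1+\alpha)}\|u\|_{L^\infty(Q_1)}$, and your absorption of it does not close. The ``standard iteration on shrinking cubes'' handles inequalities $\phi(t)\le\theta\phi(s)+A$ with $A$ scale-independent; here the good term is $C\inf_{Q_{\rho}(y)}u$ over a \emph{small} cube far from where $\inf_Bu$ is attained, and comparing it to $\inf_Bu$ requires the very Harnack chain you are constructing. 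Quantitatively: making the error coefficient $\epsilon_\rho=C(\rho^{\alpha+2}\|c\|_\infty)^{1/(1+\alpha)}$ small forces $\rho$ small, hence a chain of $N\sim\rho^{-1}$ links and a factor $C^{N}$ multiplying the propagated error, which overwhelms $\epsilon_\rho$. The modification actually needed is different: carry $cu|u|^\alpha$ through the proof of Lemma~\ref{lem:wh ub}, where on the contact set $\{w\le0\}$ one has $0\le u\le M_{\mathrm{B}}$ with $M_{\mathrm{B}}$ universal, so the zeroth-order term contributes only the genuine constant $\|c\|_\infty M_{\mathrm{B}}^{1+\alpha}M_F^{-\alpha}$ to $\overline{f}$; and check that the rescalings of Lemma~\ref{lem:rescaled} transform $c$ into $t_0^{\alpha+2}c\le c$ (the operator being jointly $(1+\alpha)$-homogeneous in $(u,Du,D^2u)$), so the modified structure condition survives Steps~1 and 3 of the proof of Theorem~\ref{thm:whi} with constants depending only on $\|c\|_\infty$. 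Your sub-solution side (dropping $cu|u|^\alpha\ge0$) and the final covering from cubes to the ball $B$ are fine.
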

%------------------------------------------------------------------
\begin{rem}
This result is proved in \cite{bd} only in dimension $2$. 
Moreover, ours is slightly more precise since it depends on $q$ 
and $\|b\|_{L^q (Q_1)}$ instead of $\|b\|_{L^\infty (Q_1)}$.
\end{rem}
Their results are not included in ours but they can be derived with
little additional work. We mention that Birindelli and Demengel do not
prove this Harnack inequality  by proving first an ABP estimate. 

\paragraph{Proof of Theorem~\ref{thm:dfq}.}
D\'avila, Felmer and Quaas kindly explained to us the link between
their result and our result. We slightly adapt their argument
to get the general case. 

Assumption~(H2) implies $|p|^\alpha \mathcal{M}^- (X) \le F_0(p,X) \le
|p|^\alpha \mathcal{M}^+ (X)$.  

\begin{itemize}
\item If $\alpha \ge 0$,
\eqref{loclip2} and \eqref{loclip3} are satisfied for any $M_F>0$ with
$\sigma = |b|$, $f= \frac{f_0^+}{M_F^\alpha}$ and $\gamma_F u$ is
replaced with $c u |u|^\alpha$.  Moreover, \eqref{strictell} is
satisfied for any $M_F >0$ and with $\sigma =|b|$ and
$g(x,u)=\frac{(f_0(x) +cu|u|^\alpha)^+}{M_F^\alpha}$. In particular,
$g(x,-M_\partial) \le \frac{f_0^+(x)}{M_F^\alpha}$ since $c\ge
0$. Hence, our result gives
$$
\sup_{B_d} u^- \le \sup_{\partial B_d} u^- + Cd \left( M_F + \frac1{M_F^\alpha} 
\left(\int_{B_d \cap \{ u +
  M_\partial = \Gamma  (u)\}} (f_0^+)^n\right)^{1/n}\right) \, .
$$
Optimizing with respect to $M_F>0$ gives \eqref{eq:dfq}.
\item
If $\alpha = -\beta <0$, then $F(x,u,p,X) \ge 0$ and $u \le 0$ implies
$$
\mathcal{M}^+ (X) + |b(x)||p| + (f_0 + c u |u|^{-\beta})_+|p|^\beta \ge 0 \, . 
$$
Now using 
$$
g(p) =  |p|^{-\beta n} \left( |p|^{\frac{n(n-\beta)}{n-1}}+\mu^{\frac{n}{n-1}} \right)^{-n}
$$
in the proof of Theorem 1 permits to conclude after very similar computations.
\end{itemize}
\medskip

The Harnack inequality of \cite{bd} when $c=0$ can be
easily obtained from ours in any dimension (but not when $c \neq 0$).
The case $c \neq 0$ can also be treated but it requires to modify 
proofs.

\section{Proofs}
\label{sec:proofs}

\subsection{Proof of the weak Harnack inequality}

\begin{proof}[Proof of the weak Harnack inequality (Theorem~\ref{thm:whi})]
The proof of the weak Harnack inequality is performed in four steps. 
First, the problem is reduced to the case of a cube $Q$ of universal side-length  
(Lemma~\ref{lem:whi-reduced}), then it is proved that non-negative super-solutions
 can be bounded from above on $Q$ by a universal constant on a set of universal
positive measure (Lemma~\ref{lem:wh ub}). Next, the measures of all level sets 
of super-solutions (restricted to $Q$) are (universally) estimated from above. 
Finally, we prove the weak Harnack inequality in $Q$. 

\paragraph{Step 1.} As explained above, we first reduce the problem to a simpler one.
%--------------------------------------------------------
\begin{lem}[Reduction of the problem]\label{lem:whi-reduced}
Consider a non-negative super-solu\-tion $u$ of \eqref{eq:main} in $Q_{2R}$.
Then there exist universal constants $p_0$, $\eps_0$ and $C$ satisfying
\begin{equation}\label{eq:whi-reduced}
\left.\begin{array}{r}
 \inf_{Q_{3r}} u \le 1 \\
\max (M_F, \gamma_F, \|f\|_{L^n(Q_R)}, \|\sigma\|_{L^q(Q_{R})}) \le \eps_0
\end{array}\right\}  \Rightarrow \|u\|_{L^{p_0} (Q_r)} \le C \, .
\end{equation}
\end{lem}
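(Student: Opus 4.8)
The plan is to follow the standard Krylov--Safonov/Caffarelli reduction, adapting each step so that the degeneracy threshold $M_F$ and the constant $\gamma_F$ are absorbed into the smallness parameter $\eps_0$. First I would fix the scaling. Given a non-negative super-solution $u$ of \eqref{eq:main} in $Q_{2R}$, apply the rescaling Lemma~\ref{lem:rescaled} with $t_0 = 2R$ (so $Q_{R_0}$ with $R_0 = 1$ maps onto $Q_{2R}$) and with a normalization constant $M_0$ to be chosen; in fact, since we only care about a qualitative reduction to a cube of \emph{universal} side-length, it suffices to reduce to, say, $Q_1$. Under this rescaling, $M_F \mapsto M_F/(2R M_0^{-1}\cdot\text{(factor)})$, $\gamma_F \mapsto (2R)^2\gamma_F$, $\|f\|_{L^n}$ and $\|\sigma\|_{L^q}$ scale by explicit powers of $2R$ as recorded in Lemma~\ref{lem:rescaled}; choosing $R$ (equivalently the universal side-length $r$ of the target cube) small enough compared to the eventual $\eps_0$ lets us assume from the outset that $M_F, \gamma_F, \|f\|_{L^n(Q_1)}, \|\sigma\|_{L^q(Q_1)}$ are all at most the $\eps_0$ provided by Lemma~\ref{lem:wh ub} and the barrier Lemma~\ref{lem:barrier}. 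This is where the radii $R, r$ of Lemma~\ref{lem:barrier} enter: one takes $r$ equal to the $r$ delivered there for the chosen $\eps_0$, and the hypothesis $\inf_{Q_{3r}} u \le 1$ is exactly the normalization matching $\varphi \le -2$ on $Q_{3r}$.

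Next I would run the measure-of-level-sets estimate. Using the hypothesis $\inf_{Q_{3r}} u \le 1$, pick $x_0 \in Q_{3r}$ with $u(x_0)$ close to $1$; then $w = u + C_{\mathrm B}\,\text{(a multiple of)}\,\varphi$ — more precisely the function comparing $u$ against translates/dilates of the barrier $\varphi$ from Lemma~\ref{lem:barrier} — is a super-solution of an equation still satisfying \eqref{strictell}, because \eqref{loclip2} passes to $w$ up to the controlled contribution $C_{\mathrm B}\xi$ supported in $\bar Q_r$, and $\gamma_F$ is small. Applying the ABP estimate Theorem~\ref{thm:abp} to $w$ on the relevant ball gives a lower bound on the measure of the contact set, hence (by the standard argument, since $\varphi \ge 0$ outside $B_R$ while $w \le$ something negative where $u$ is small) a bound of the form $|\{u \le M\}\cap Q_r| \ge \mu$ for a universal $M$ and universal $\mu > 0$; this is the content of Lemma~\ref{lem:wh ub}, which I would invoke. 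Iterating this on a Calder\'on--Zygmund cube decomposition of $Q_r$ — here the barrier's property $\varphi \le -2$ on $Q_{3r}$ together with $\mathcal M^-\varphi + C_{\mathrm B}\xi \ge 0$ is what lets one propagate the estimate from a single level set to the dyadic family $\{u \le M^k\}$ — yields the decay $|\{u > M^k\}\cap Q_r| \le (1-\mu)^k |Q_r|$, i.e. a weak-$L^{p_0}$ bound with $p_0$ chosen universal so that $M^{p_0}(1-\mu) < 1$. Summing gives $\|u\|_{L^{p_0}(Q_r)} \le C$.

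The main obstacle I expect is not any single estimate but the bookkeeping of the degeneracy threshold through the iteration: at each stage of the cube decomposition one rescales a sub-cube back to unit size, and one must check that $M_F$ stays below the threshold $\eps_0$ and that the rescaled $\|f\|_{L^n}$, $\|\sigma\|_{L^q}$ do not blow up — this works because, by Lemma~\ref{lem:rescaled}, $M_F$ scales \emph{down} by the side-length ($M_s = t_0 M_F/M_0$ with $t_0 < 1$) and $\|\sigma\|_{L^q}$ scales by $t_0^{1 - n/q} \to 0$ since $q > n$, while $\|f\|_{L^n}$ scales by $t_0/M_0$; the only genuinely delicate point is that $M_0$ (the normalization at each step) must be controlled from below so these ratios stay small, which is guaranteed by the universal bound on $u$ already obtained on the parent cube. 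Once this is in place, choosing $\eps_0$ as the minimum of the thresholds demanded by Theorem~\ref{thm:abp}, Lemma~\ref{lem:barrier} and Lemma~\ref{lem:wh ub}, and $C$, $p_0$ accordingly, finishes the proof.
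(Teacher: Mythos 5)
Your proposal follows essentially the same route as the paper: the first level-set bound via the barrier of Lemma~\ref{lem:barrier} and the ABP estimate applied to $u+\varphi$ (this is Lemma~\ref{lem:wh ub}), the power decay of all level sets via Calder\'on--Zygmund iteration with the rescaling bookkeeping of Lemma~\ref{lem:rescaled} (this is Lemma~\ref{lem:wh level sets}), and then integration of the distribution function with a universal $p_0$ to get the $L^{p_0}$ bound. The only slip is your opening paragraph: the rescaling that makes $M_F,\gamma_F,\|f\|_{L^n},\|\sigma\|_{L^q}$ small belongs to the derivation of Theorem~\ref{thm:whi} \emph{from} this lemma, whereas in the lemma itself that smallness is already a hypothesis and need not be arranged.
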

%---------
We now explain how to derive the weak Harnack inequality from it.  Let
$v$ be a super-solution of \eqref{eq:main} in $Q_{R/t}$ for some
$t>0$.  We then define a function $v_s (y)= \frac{v(ty)}{V}$ with
$V>0$ and $t \in (0,1)$ to be chosen later.  Thanks to
Lemma~\ref{lem:rescaled} with $x_0=0$, $M_0 = V$, $R_0 = R /t$, the
new function $v_s$ satisfies $F_s\ge 0$ in $Q_{R}$ for a non-linearity
$F_s$ satisfying {\rm (A)} and \eqref{loclip2} with
$$
M_s = \frac{tM_F}{V} , \quad \gamma_s= \gamma_F t^2,
\quad \sigma_s (y)= t \sigma (ty), \quad f_s (y) = \frac{f^+(ty)}{V} \, .
$$
Hence, if one chooses 
\begin{eqnarray*}
V &=& \inf_{Q_{3r/t}} v + \delta + \eps_0^{-1} \max(M_F,  \|f\|_{L^n(Q_{R/t})}) \\
t&=& \left(\left(\frac{\|\sigma\|_{L^q (Q_{R/t})}}{\eps_0}\right)^{q/(q-n)} 
+ \left(\frac{\gamma_F}{\eps^0}\right)^{1/2} +1\right)^{-1}
\end{eqnarray*}
we obtain that $v$ satisfies
\begin{eqnarray*}
 \inf_{Q_{3r}} v_s \le 1 \\
\max (M_s, \gamma_s, \|f_s\|_{L^n(Q_{R})}, \|\sigma_s\|_{L^q(Q_{R})}) \le \eps_0 \, .
\end{eqnarray*}
We thus can apply Lemma~\ref{lem:whi-reduced} and we obtain from \eqref{eq:whi-reduced} 
the following estimate (after letting $\delta \to 0$)
\begin{equation}\label{eq:whi avant covering}
\|v\|_{L^{p_0} (Q_{r/t})} \le C ( \inf_{Q_{3r/t}} v + \max (M_F, \|f\|_{L^n(Q_{R/t})}) ) \, .
\end{equation}
A standard covering procedure  permits to  get \eqref{eq:whi}. 

\paragraph{Step 2.} In this step, we obtain a (universal) upper bound $M$ for non-negative
super-solutions in $Q_{R}$ on a set of (universal) positive measure $\mu$ if 
the $L^n$-norm of $f$ on $Q_{R}$, the $L^q$-norm of $\sigma$ on $Q_{R}$, $M_F$ and $\gamma_F$ 
are (universally) small. 
%--------------------------------------------------------------
\begin{lem}[Upper bound on a subset 
of positive measure]\label{lem:wh ub}
There exist universal constants $r,R>0$, $\eps_0 >0$, $\mu \in (0,1)$ and $M_{\mathrm{B}}>0$ 
such that for any non-negative super-solution $u$ of \eqref{eq:main} in $Q_{R}$, we have
$$
\left.\begin{array}{r} \inf_{Q_{3r}} u \le 1 \\ 
    \max(M_F, \gamma_F  , \|f\|_{L^n (Q_{R})}, \|\sigma \|_{L^q(Q_{R})}) \le \eps_0
  \end{array} \right\}
\Rightarrow | \{ u \le M_{\mathrm{B}} \} \cap Q_r | \ge \mu |Q_r| \, . 
$$
\end{lem}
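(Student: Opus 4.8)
The plan is to apply the ABP estimate (Theorem~\ref{thm:abp}) to a suitable comparison function built from the barrier $\varphi$ constructed in Lemma~\ref{lem:barrier}. First I would fix the barrier $\varphi$ and the associated constants $R,r>0$, $M_{\mathrm{B}}>1$, $C_{\mathrm{B}}>0$ and the cutoff $\xi$ supported in $\bar Q_r$, produced by Lemma~\ref{lem:barrier} for a small parameter $\eps_0$ to be adjusted at the end; recall $Q_{3r}\subset B_R$. Given a non-negative super-solution $u$ of \eqref{eq:main} in $Q_R$ with $\inf_{Q_{3r}} u\le 1$, set $w=u+\varphi$. Since $\varphi\le -2$ on $Q_{3r}$ and $\inf_{Q_{3r}}u\le1$, one has $\inf_{Q_{3r}} w \le -1<0$, so $\sup_{B_R} w^->0$; on the other hand, on $\partial B_R$ we have $\varphi\ge0$ (by \eqref{barrier:lb infinity}) and $u\ge0$, hence $w\ge0$ there and $\sup_{\partial B_R} w^-=0$. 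Thus $M_\partial=0$ in the notation of Theorem~\ref{thm:abp} applied to $w$ on $B_R$.

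Next I would check that $w$ is a super-solution of an equation to which \eqref{strictell} applies. Using \eqref{loclip2} for $u$ together with the semiconcavity estimate \eqref{barrier:semi concave} for $\varphi$ and the gradient bound \eqref{barrier:gradient} ($|D\varphi|\le\eps_0$), one obtains, for any $(p,A)\in D^{2,-}w(x)$ with $|p|\ge M_F$ and $w(x)\le 0$, an inequality of the form $-\lambda_F\,\tr A + \sigma(x)|p| + g(x) \ge 0$ where $g$ absorbs $f$, the $C_{\mathrm{B}}\xi$ term from the barrier, the $\gamma_F u$ term (bounded using $u\le M_{\mathrm{B}}$ where needed, or rather handled via $\gamma_F$ small), and the contributions of $|D\varphi|$ — here one uses that $\varphi$ is smooth so that subjets of $w$ split as subjets of $u$ plus the classical jet of $\varphi$, and that $|p-D\varphi|\ge|p|-\eps_0$. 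The precise bookkeeping gives $\|g^+\|_{L^n(B_R)}\le C(\|f\|_{L^n(Q_R)} + C_{\mathrm{B}} + \gamma_F + \eps_0)$, with the $C_{\mathrm{B}}\xi$-part supported in $\bar Q_r$. Applying Theorem~\ref{thm:abp} to $w$ on $B_R$ yields
\[
1 \le \sup_{B_R} w^- \le C R\Big(M_F + \big(\textstyle\int_{B_R\cap\{w=\Gamma(w)\}} (g^+)^n\big)^{1/n}\Big).
\]

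Then I would exploit the structure of $g^+$: away from $\bar Q_r$ the only contributions to $g^+$ are $f^+$ and the $\gamma_F,\eps_0$ terms, all of which are made $\le\eps_0$ small by hypothesis and by the choice of $\varphi$, so their $L^n(B_R)$-norm is $\le C\eps_0$; choosing $\eps_0$ small (universal) this part is $\le 1/(2CR)$ and contributes at most $1/2$ on the right-hand side. Likewise $CRM_F\le CR\eps_0\le1/4$. Hence the remaining term forces
\[
\big(\textstyle\int_{Q_r\cap\{w=\Gamma(w)\}} (g^+)^n\big)^{1/n}\ge c_0>0
\]
for a universal $c_0$. On $Q_r$, $g^+$ is bounded by a universal constant times $\max(1,\|u\|_{L^\infty(Q_r\cap\{\dots\})})$ — more carefully, by $C_{\mathrm{B}} + f^+ + \gamma_F u + (\text{lower order})$, and wherever the contact set meets $\{u>M_{\mathrm{B}}\}$ one controls $g^+$ pointwise but needs an a priori bound; the clean way is: on the contact set $\{w=\Gamma(w)\}\cap Q_r$ I claim $u\le M_{\mathrm{B}}$ need not hold, so instead I bound $\int_{Q_r}(g^+)^n \le C\big(|Q_r\cap\{u\le M_{\mathrm{B}}\}| + \eps_0^n + \text{contribution of }\{u>M_{\mathrm{B}}\}\big)$ and argue that on the contact set where $\Gamma(w)$ is finite and $w\le 0$, $u=w-\varphi\le -\varphi\le M_{\mathrm{B}}$, since $\varphi\ge -M_{\mathrm{B}}$ gives $-\varphi\le M_{\mathrm{B}}$ — wait, that gives $u\le M_{\mathrm{B}}$ exactly on $\{w\le 0\}$, which contains the contact set. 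Thus on the contact set inside $Q_r$ one genuinely has $u\le M_{\mathrm{B}}$, so $g^+\le C_1$ universally there, and therefore $c_0^n \le \int_{Q_r\cap\{w=\Gamma(w)\}}(g^+)^n \le C_1^n\,|Q_r\cap\{w=\Gamma(w)\}| \le C_1^n\,|Q_r\cap\{u\le M_{\mathrm{B}}\}|$. This yields $|\{u\le M_{\mathrm{B}}\}\cap Q_r|\ge (c_0/C_1)^n =: \mu|Q_r|$ after normalizing, with $\mu\in(0,1)$ universal.

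The main obstacle is the careful handling of the viscosity-solution jet calculus for the sum $u+\varphi$ combined with the gradient-shift in the degenerate regime — namely verifying that \eqref{strictell} (equivalently the pointwise jet property \eqref{eq:abp-structure}) holds for $w$ with a clean $L^n$-controlled right-hand side, given that the degeneracy threshold $M_F$ must be compared with $|p|$ and with $|p-D\varphi|$, and that the barrier term $C_{\mathrm{B}}\xi$ must be tracked as being supported in $\bar Q_r$ so it does not pollute the estimate outside the cube. The rest is the standard ABP-to-measure-estimate argument of \cite{cc95}, adapted as in the proof of Theorem~\ref{thm:abp}.
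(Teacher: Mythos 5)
Your proposal is correct and follows essentially the same route as the paper: apply the ABP estimate to $w=u+\varphi$ on $B_R$, verify the strict-ellipticity structure for $w$ via \eqref{loclip2}, the smoothness of $\varphi$, the gradient shift $|Dw-D\varphi|\ge M_F$ when $|Dw|\ge M_F+\eps_0$, and \eqref{barrier:semi concave}, then use that the contact set lies in $\{w\le 0\}\subset\{u\le M_{\mathrm{B}}\}$ and that the dominant term $C_{\mathrm{B}}\xi$ is supported in $\bar Q_r$ to convert the ABP lower bound into the measure estimate. The only cosmetic difference is that the paper isolates the $C_{\mathrm{B}}\xi$ contribution directly as $C_{\mathrm{B}}\,|\{\Gamma(w)=w\}\cap Q_r|^{1/n}$ rather than bounding $g^+$ uniformly on the contact set, but the two bookkeepings are equivalent.
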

%---------------------------------------------------------------
The proof of this lemma relies on the barrier function $\varphi$ that we constructed in the preliminary section
and on the ABP estimate applied to $w = u +\varphi$. 
\begin{proof}[Proof of Lemma~\ref{lem:wh ub}]
Given $\eps_0>0$ to be fixed later, we
consider $\varphi$ from Lemma~\ref{lem:barrier} and define $w= u + \varphi$. We
want to apply the ABP estimate (Theorem~\ref{thm:abp}) to the function $w$ on 
the ball $B_{R}$. 
\begin{itemize}
\item
First, $u \ge 0$ and $\varphi \ge 0$ on $\partial B_R$ 
hence $M_\partial = \sup_{\partial B_R} w^- = 0$.  
\item
Since $\inf_{Q_{3r}} u \le 1$ and $\varphi \le -2$ in $Q_{3r}$, we conclude
that $\inf_{Q_{3r}} w \le -1$; in other words, we have $\sup_{Q_{3r}} w^- \ge 1$. 
\item
We also claim that $w$ is a super-solution of an appropriate equation. 
More precisely, we claim that $w$ satisfies \eqref{eq:abp-structure} in $\{ w \le 0 \} \cap B_R$
for some appropriate continuous functions $\overline{f}$ and $\overline{\sigma}$.
\end{itemize}
Let us justify the last assertion and make precise what $\overline{f}$ and $\overline{\sigma}$ are. 
We write
\begin{eqnarray*}
0 & \le & F(x,u,Du,D^2 u) \\ 
&=& F(x,w-\varphi,Dw -D\varphi, D^2w - D^2 \varphi) \\
& \le & F(x,w + M_{\mathrm{B}},Dw -D\varphi, D^2 w - D^2 \varphi) \, .
\end{eqnarray*}
Assume next that $|Dw | \ge M_F+\eps_0 =:\overline{M}_F$, $D^2 w \ge 0$ (in the viscosity sense) 
and $w \le 0$. Then $|Dw-D\varphi| \ge M_F$ and we obtain from 
\eqref{loclip2} the following inequality
$$
0 \le \mathcal{M}^+ (D^2 w) - \mathcal{M}^- (D^2 \varphi) + \sigma |Dw| + \gamma_F M_B + \sigma \eps_0 + f  
$$
(we used the fact that $\mathcal{M}^+$ is subadditive and the relation between the two Pucci 
operators). Use next that $D^2 w \ge 0$ and 
$\varphi$ satisfies \eqref{barrier:semi concave}
$$
 \lambda_F \Delta w \le  \sigma |Dw| +C_{\mathrm{B}} \xi + \gamma_F M_B +  \sigma \eps_0 + f \, .
$$
Hence \eqref{eq:abp-structure} holds true with 

$$\overline{\sigma} = \sigma   \quad \text{ and } \quad
\overline{f} (x) = C_{\mathrm{B}} \xi + \gamma_F M_B  + \sigma \eps_0 + f \, .
$$
By using the ABP estimate for $w$ and the properties listed above 
satisfied by this function, we obtain
\begin{eqnarray*}
1 \le \sup_{B_R} w^- \le 3e^{C_{\mathrm{ABP}} (1+\|\sigma\|^n_{L^n(B_R)})} R 
 \left(\bar{M}_F + \left(\int_{\{\Gamma (w) = w\} \cap B_R} (\overline{f}^+)^n \right)^{1/n}\right)
\end{eqnarray*}
where $\Gamma (w)$ is the convex hull of $\min (w,0)$ after
extending $w$ to $B_{2R}$ by setting $w\equiv 0$ outside $B_R$. We now
use the fact that 
$$
\max(M_F ,\gamma_F, \|f\|_{L^n (Q_{R})}, \|\sigma
\|_{L^q(Q_{R})} ) \le \eps_0 \, ,$$
together with  
definitions of $\overline{f}$, $\overline{M}_F$
and the fact that $\mathrm{supp} \, \xi \subset Q_r$ in order to get
$$
1 \le 3e^{C_{\mathrm{ABP}} (1+(\omega_n R)^{n(1-\frac{n}q)} \eps_0^n)} R ( 3 \eps_0 +
(\omega_n R)^{1-\frac{n}q}\eps_0^2 + \eps_0 M_B + C_{\mathrm{B}} |\{\Gamma (w) = w\} \cap Q_r |
) \, .
$$
It is now enough to remark that
$$
\{ \Gamma (w) = w \} \subset \{ w \le 0 \} \subset\{ u \le M_{\mathrm{B}} \} 
$$ 
and to choose $\eps_0 \in (0,1)$ such that
$$
3 e^{C_{\mathrm{ABP}} (1+(\omega_n R)^{n(1 -\frac{n}q)} \eps_0^n)} R ( 3\eps_0
 + (\omega_n R)^{1-\frac{n}q} \eps_0^2+ \eps_0 M_B ) \le \frac12
$$
to conclude. We used here that $M_B$ is universal; in particular, it does not depend on $\eps_0$. 
\end{proof}

\paragraph{Step 3.} 
We derive from the previous lemma (Lemma~\ref{lem:wh ub}) an estimate
of any level set of super-solutions $u$ under consideration.  
Precisely, we use Lemma~\ref{lem:rescaled} together with the Calder\'on-Zygmund cube
decomposition lemma (see Lemma~\ref{lem:czd} in Appendix~\ref{app:additional})
in order to get the following result.  
%-----------------------------------------------------------------------------
\begin{lem}[Estimate of the measure of level sets  in $Q_r$]
\label{lem:wh level sets} 
Let $u$ be as in Lem\-ma~\ref{lem:wh ub}. Then there exist universal constants
$\eps>0$ and $d>0$ such that for all $t>0$, 
\begin{equation}\label{eq:wh level sets}
| \{ u \ge t\} \cap Q_r | \le d t^{-\eps} \, .
\end{equation}
\end{lem}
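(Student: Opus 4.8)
The plan is to iterate Lemma~\ref{lem:wh ub} at dyadic scales via a standard Calder\'on--Zygmund stopping-time argument, exactly as in the Krylov--Safonov/Caffarelli--Cabr\'e scheme, and the main work is checking that the hypotheses of Lemma~\ref{lem:wh ub} are preserved under the rescaling from Lemma~\ref{lem:rescaled} at every step.

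First I would set the notation. Write $Q = Q_r$ and let $u$ be as in Lemma~\ref{lem:wh ub}, so in particular $\inf_{Q_{3r}} u \le 1$ and $\max(M_F,\gamma_F,\|f\|_{L^n(Q_R)},\|\sigma\|_{L^q(Q_R)}) \le \eps_0$. Define, for $t \ge 1$, the level set $A_t = \{u \ge t\} \cap Q_r$ and $D_t = \{u \le M_{\mathrm{B}}\} \cap Q_r$ relative to $Q_r$. Lemma~\ref{lem:wh ub} gives $|D_{M_{\mathrm{B}}}| \ge \mu |Q_r|$, i.e. $|A_{M_{\mathrm{B}}}| \le (1-\mu)|Q_r|$. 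The claim \eqref{eq:wh level sets} with $\eps = -\log(1-\mu)/\log M_{\mathrm{B}}$ (so that $(1-\mu) = M_{\mathrm{B}}^{-\eps}$) and $d = |Q_r| M_{\mathrm{B}}^{\eps}$ follows if I can show the geometric decay
$$
|A_{M_{\mathrm{B}}^{k+1}}| \le (1-\mu)\,|A_{M_{\mathrm{B}}^k}|, \qquad k = 0,1,2,\dots,
$$
since then $|A_{M_{\mathrm{B}}^k}| \le (1-\mu)^k|Q_r| = M_{\mathrm{B}}^{-\eps k}|Q_r|$, and for general $t \in [M_{\mathrm{B}}^k, M_{\mathrm{B}}^{k+1})$ one has $|A_t| \le |A_{M_{\mathrm{B}}^k}| \le M_{\mathrm{B}}^{\eps} t^{-\eps}|Q_r| = d t^{-\eps}$; for $t < M_{\mathrm{B}}$ the bound is trivial after adjusting $d$.

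The heart of the argument is the geometric decay, proved by induction on $k$ using the Calder\'on--Zygmund cube decomposition (Lemma~\ref{lem:czd}) applied inside $Q_r$ to the set $A := A_{M_{\mathrm{B}}^{k+1}}$ and the base $B := A_{M_{\mathrm{B}}^k}$. One checks the two CZ hypotheses: $|A| \le (1-\mu)|Q_r|$ (which is the inductive bound at level $M_{\mathrm{B}}^{k+1} \ge M_{\mathrm{B}}$), and the implication that whenever a dyadic subcube $\tilde Q$ of $Q_r$ satisfies $|A \cap \tilde Q| > (1-\mu)|\tilde Q|$, then its dyadic predecessor $\hat{\tilde Q}$ is contained in $B$. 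To verify the implication, suppose $\tilde Q$ has side $\rho$ and predecessor $\hat{\tilde Q}$, and argue by contrapositive: if $\hat{\tilde Q} \not\subset B$ there is a point $x_0 \in \hat{\tilde Q}$ with $u(x_0) < M_{\mathrm{B}}^k$. Rescale $u$ from $\hat{\tilde Q}$ (or a suitable concentric cube of comparable size) to $Q_R$ via $T$ as in Lemma~\ref{lem:rescaled} with $M_0 = M_{\mathrm{B}}^k$ and $t_0$ comparable to $\rho$; set $u_s = u\circ T / M_{\mathrm{B}}^k$. Then $u_s \ge 0$, $\inf u_s < 1$ on the image of $\hat{\tilde Q}$ (hence $\inf_{Q_{3r}} u_s \le 1$ after arranging the geometry of the cube decomposition so that $\hat{\tilde Q}$'s image covers $Q_{3r}$), and by the rescaling formulas the new structural data are $M_s = t_0 M_F / M_{\mathrm{B}}^k \le M_F \le \eps_0$, $\gamma_s = t_0^2\gamma_F \le \eps_0$, and the $L^n$ resp. $L^q$ norms of $f_s,\sigma_s$ are bounded by $\frac{t_0}{M_{\mathrm{B}}^k}\|f\|_{L^n} \le \|f\|_{L^n} \le \eps_0$ resp. $t_0^{1-n/q}\|\sigma\|_{L^q} \le \|\sigma\|_{L^q} \le \eps_0$, using $t_0 < 1$, $M_{\mathrm{B}}^k \ge 1$, and $q>n$. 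So $u_s$ meets the hypotheses of Lemma~\ref{lem:wh ub}, giving $|\{u_s \le M_{\mathrm{B}}\} \cap Q_r| \ge \mu|Q_r|$; undoing the scaling and translating back, $|\{u \le M_{\mathrm{B}}^{k+1}\} \cap \hat{\tilde Q}| \ge \mu|\hat{\tilde Q}|$, i.e. $|A \cap \hat{\tilde Q}| \le (1-\mu)|\hat{\tilde Q}|$, and since $\tilde Q \subset \hat{\tilde Q}$ with $|\hat{\tilde Q}| = 2^n|\tilde Q|$ this is still not enough directly — so I would instead apply Lemma~\ref{lem:wh ub} at the scale of $\tilde Q$ itself (rescaling $\tilde Q \mapsto Q_R$, using the concentric enlargement to catch the point $x_0$), concluding $|A \cap \tilde Q| \le (1-\mu)|\tilde Q|$, which is the desired contrapositive. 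The CZ lemma then yields $|A| \le (1-\mu)|B|$, closing the induction.

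The main obstacle — and the step requiring care rather than cleverness — is this geometric bookkeeping in the cube decomposition: making sure the enlarged (predecessor or concentric) cubes stay inside the domain $Q_R$ where the rescaled equation and the barrier-based Lemma~\ref{lem:wh ub} are valid, that the scaling factor $t_0$ is always $\le 1$ so the smallness of $M_F$, $\gamma_F$, $\|f\|_{L^n}$, $\|\sigma\|_{L^q}$ is genuinely inherited (here $q>n$ is essential, since $\|\sigma_s\|_{L^q}$ scales with the favourable exponent $1 - n/q > 0$), and that the normalizing constant $M_{\mathrm{B}}^k \ge 1$ only helps. Once those inclusions and scalings are arranged — which is routine given Lemmas~\ref{lem:rescaled}, \ref{lem:wh ub} and \ref{lem:czd} — the power-decay \eqref{eq:wh level sets} is immediate with the explicit constants $\eps$, $d$ above.
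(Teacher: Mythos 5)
Your proposal is correct and follows essentially the same route as the paper: the discrete decay $|\{u\ge M_{\mathrm{B}}^k\}\cap Q_r|\le(1-\mu)^k|Q_r|$ by induction, the Calder\'on--Zygmund decomposition with $A$ and $B$ the consecutive level sets, and verification of the dyadic-predecessor hypothesis by rescaling the small cube to $Q_r$ via Lemma~\ref{lem:rescaled} with $M_0=M_{\mathrm{B}}^{k-1}\ge 1$ and $t_0<1$. Your self-correction at the end lands exactly on the paper's device: one rescales the cube $Q$ itself (not its predecessor) to $Q_r$, and the point of smallness in the predecessor is caught because the predecessor sits inside the threefold concentric enlargement, which is precisely why Lemma~\ref{lem:wh ub} is stated with the hypothesis $\inf_{Q_{3r}}u\le 1$ and the barrier satisfies $\varphi\le-2$ on $Q_{3r}$.
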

%-----------------------------------------------------------------------------
The proof of Lemma~4.6 in \cite{cc95}
can be easily adapted (with minor changes).
For the reader's convenience, a detailed proof is given in Appendix~\ref{app:additional}. 

\paragraph{Step 4.} We finally explain how to derive Lemma~\ref{lem:whi-reduced}. 
We first recall the following useful fact: if $u$ is a non-negative function, then
$$
\int_{Q_r} u^{p_0} = p_0 \int_0^{+\infty} t^{p_0-1} |\{ u \ge t\} \cap Q_r | dt \, .
$$
We can use the results of Lemmata~\ref{lem:wh ub} and \ref{lem:wh level sets}:
we thus choose $p_0 = \eps /2$ where $\eps$ appears in \eqref{eq:wh level sets} in
order to get
$$
\frac1{p_0} \int_{Q_r} u^{p_0} \le \int_0^1 t^{\eps/2-1} |Q_r| dt  + \int_1^{+\infty} t^{\eps/2-1} t^{-\eps} dt =: C \, .  
$$
This achieves the proof of Lemma~\ref{lem:whi-reduced} and  the proof of Theorem~\ref{thm:whi}. 
\end{proof}

\subsection{Proof of the local maximum principle}

The proof of the local maximum principle is easily adapted from \cite{cc95}. 
However, we give a detailed proof for the sake of completeness.

\begin{proof}[Proof of Theorem~\ref{thm:lmp}]
The proof is divided in two steps. First, the problem is reduced
to the case where the $L^\eps$-norm of $u$ is small; it
is to be proven that $u$ is bounded by a universal constant (Step 1).  
Then we explain how to get the universal bound (Steps 2 and 3). 

\paragraph{Step 1.} We state the lemma to be proven in Steps~2 and 3. 
\begin{lem}\label{lem:reduced2}
Consider a sub-solution $u$ of \eqref{eq:main} in $Q_{R}$. Then there exists
a universal constant $C>0$ such that
$$
\left.\begin{array}{r} 
\| u^+ \|_{L^\eps (Q_r)} \le d^{1/\eps} \\
\max(M_F,\gamma_F, \| f \|_{L^n (Q_{R})},\|\sigma\|_{L^q (Q_{R})}) \le \eps_0 
\end{array}\right\} \Rightarrow \sup_{Q_{\frac{r}4}} u \le C 
$$
where $\eps$ and $d$ appears in Lemma~\ref{lem:wh level sets}. 
\end{lem}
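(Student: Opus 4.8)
The plan is to follow the classical argument from \cite{cc95} (local maximum principle) adapted to the degenerate setting. Since the hypotheses of Lemma~\ref{lem:wh ub} are "one-sided" (they concern non-negative super-solutions), here one works instead with the sub-solution $u$ and exploits the dual assumption \eqref{loclip3}; equivalently, one applies the weak-Harnack machinery to a suitable super-solution built from $u$. First I would introduce, for $t \ge 0$ and $\beta>0$ to be chosen, the quantity
\[
\Theta(t) = \sup\Big\{ s \ge 0 : |\{u > s\}\cap Q_{r/4 + t(r/4)}| \ge ??\Big\},
\]
but more cleanly: one argues by a contradiction/iteration scheme on a nested family of cubes $Q_{t}$ with $t \in [r/4, r/2]$. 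Suppose for contradiction that $\sup_{Q_{r/4}} u$ is very large; pick a point $x_0 \in Q_{r/4}$ where $u(x_0)$ is close to this supremum. Around $x_0$ one considers the polynomial barrier of the form $v(x) = A\,(1 - |x - x_0|^2/\rho^2)^{-\beta}$ on a small ball $B_\rho(x_0)$, scaled so that $v \le u$ on $\partial B_\rho(x_0)$ while $v(x_0) < u(x_0)$; by continuity of $u$ (u is a sub-solution, hence usc, and here continuous) the two graphs must cross, producing a point $\bar x$ where $u - v$ attains a positive interior maximum.

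The key step is then to test the equation: at $\bar x$ we have a subjet of $u$ coming from $v$, so $(Dv(\bar x), D^2 v(\bar x)) \in D^{2,+}u(\bar x)$ (up to sign conventions). For $\beta$ large enough and $A$ large enough, $|Dv(\bar x)| \ge M_F$ and $F(\bar x, u, Dv, D^2 v) \le 0$, so \eqref{loclip3} applies and yields
\[
\mathcal{M}^-(D^2 v(\bar x)) - \sigma(\bar x)|Dv(\bar x)| + \gamma_F u(\bar x) - f(\bar x) \le 0.
\]
A direct computation of $\mathcal{M}^-(D^2 v)$ for this explicit radial $v$ shows that, choosing $\beta > \tfrac{\Lambda_F}{\lambda_F}(n-1) - 1$, the Hessian term is positive and of size comparable to $A \rho^{-2} |x-x_0|\cdot(\dots)^{-\beta-2}$, which must dominate the lower-order terms $\sigma|Dv| + f + \gamma_F u$ once $A$ (equivalently $u(x_0)$) is large, using that $M_F, \gamma_F, \|f\|_{L^n}, \|\sigma\|_{L^q} \le \eps_0$ are all small. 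This produces a strict inequality $0 < 0$, the desired contradiction, and pins down the universal constant $C$.

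The main obstacle I expect is \textbf{not} the barrier computation itself — that is routine, mirroring Lemma~\ref{lem:barrier} — but rather getting the iteration/covering bookkeeping to close: one must convert the pointwise contradiction into a bound on $\sup_{Q_{r/4}} u$ in terms of the \emph{integral} quantity $\|u^+\|_{L^\eps(Q_r)} \le d^{1/\eps}$ rather than its sup. This is done, as in \cite{cc95}, by running the argument on a geometrically growing sequence of radii $\rho_k$ and thresholds, so that the level sets $\{u > M_k\}$ have measure forced below $d M_k^{-\eps}$ (by Lemma~\ref{lem:wh level sets} applied to the super-solution side), and summing a geometric series; the degenerate correction only enters through the harmless additive shift $M_F \le \eps_0$, which is already absorbed into $C$. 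One extra care point is ensuring $|Dv| \ge M_F$ throughout the crossing region: since $\|Dv\|$ is smallest near $x_0$, one either enlarges $A$ or slightly shrinks the annulus where the crossing is sought, exactly as in the handling of the "large gradient" regime elsewhere in the paper.
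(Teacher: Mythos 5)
Your high-level architecture (argue by contradiction, iterate over geometrically shrinking cubes and geometrically growing thresholds, and feed in the $L^\eps$ hypothesis through a level-set/Chebyshev bound) matches the paper's, but the engine you propose for the pointwise step does not work, and the part you defer as ``bookkeeping'' is in fact the entire content of the proof. Concretely: the barrier $v(x)=A\,(1-|x-x_0|^2/\rho^2)^{-\beta}$ blows up on $\partial B_\rho(x_0)$, so it cannot satisfy $v\le u$ there; what you can arrange is that $v$ touches the sub-solution $u$ from above at some interior $\bar x$, giving $(Dv(\bar x),D^2v(\bar x))\in D^{2,+}u(\bar x)$ and, via \eqref{loclip3}, $\mathcal{M}^-(D^2v(\bar x))\le \sigma(\bar x)|Dv(\bar x)|-\gamma_F u(\bar x)+f(\bar x)$. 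But $v$ is convex, so $D^2v(\bar x)\ge 0$ and, with the paper's convention $\mathcal{M}^-(X)=\inf_A(-\tr(AX))$, one has $\mathcal{M}^-(D^2v(\bar x))\le 0$: the inequality holds no matter how large $A$ is, and no contradiction can be extracted. This is not a repairable sign slip -- any test function that is small at $x_0$ and large near $\partial B_\rho(x_0)$ is convex along rays at the touching point, so a single pointwise touching can never contradict the one-sided condition \eqref{loclip3} satisfied by a sub-solution. More fundamentally, the hypothesis here is an integral bound $\|u^+\|_{L^\eps(Q_r)}\le d^{1/\eps}$, so the mechanism must be measure-theoretic; a pointwise barrier cannot see it.

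What the paper actually does (Lemma~\ref{lem:construction} and Step~3) is: assuming $u(x_0)\ge\nu^{j-1}M_0$ and, for contradiction, $\sup_{Q_{l_j}(x_0)}u\le\nu^jM_0$, it forms the \emph{non-negative super-solution} $v(y)=\big(\nu M_0-u(T(y))/\nu^{j-1}\big)/\big((\nu-1)M_0\big)$ on a rescaled cube, checks via Lemma~\ref{lem:rescaled} that the rescaled non-linearity still satisfies \eqref{loclip2} with small constants, and applies the weak Harnack machinery (Lemmata~\ref{lem:wh ub} and~\ref{lem:wh level sets}) to $v$; this forces $|\{u\le\nu^jM_0/2\}\cap Q_{l_jr/R}(x_0)|$ to be small, while Chebyshev applied to the $L^\eps$ hypothesis forces $|\{u\ge\nu^jM_0/2\}\cap Q_r|$ to be small, and the two bounds undercount the measure of the small cube. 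Convergence of $\sum_j l_j$ then yields a sequence $x_j$ with $u(x_j)\to\infty$ inside $Q_{r/2}$, contradicting upper semi-continuity. Your sketch alludes to ``Lemma~\ref{lem:wh level sets} applied to the super-solution side'' but never constructs this complementary super-solution, never verifies the rescaled structure conditions, and misattributes the level-set bound on $u$ itself (which comes from Chebyshev, not from that lemma). As written, the proposal has a genuine gap.
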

We now explain how to derive Theorem~\ref{thm:lmp} from this lemma. 
First, it is enough to get \eqref{eq:lmp} for a particular $p$ 
since the full result can be obtained by interpolation. In view of the previous
lemma, we consider $p = \eps$. By scaling $u$ and by using a covering argument,
we obtain the desired result.

\paragraph{Step 2.}
We remark that the assumption $\| u^+ \|_{L^\eps (Q_r)} \le
d^{1/\eps}$ implies for all $t>0$,
$$
|\{ u \ge t \} \cap Q_r| \le t^{-\eps} \int_{Q_r} (u^+)^\eps \le d t^{-\eps} \, .
$$
Remark that this estimate already appeared in the proof of the weak
Harnack inequality; see \eqref{eq:wh level sets} above. 
We next prove the following lemma.
%-----------------------------------
\begin{lem} \label{lem:construction}
Consider a sub-solution $u$ of \eqref{eq:main} in $Q_{R}$ satisfying \eqref{eq:wh level sets}
and $F$ be such that 
$$
\max(M_F, \gamma_F, \|f\|_{L^n (Q_{R})},\|\sigma\|_{L^q (Q_{R})}) \le \eps_0 \, .
$$ 
Then there exists universal
constants $M_0 >1$ and $\Sigma >0$ such that
$$
\left. \begin{array}{r} 
x_0 \in Q_{\frac{r}2} , j \in \N \\
u (x_0) \ge \nu^{j-1} M_0 
\\
\end{array}\right\} \Rightarrow
\sup_{Q_{l_j} (x_0)} u > \nu^j M_0
$$
where $l_j=\Sigma \frac{M_0^{-\eps/n}}{\nu^{\eps j / n}} < \frac{r}2$ and $\nu = M_0 / (M_0 -1/2)>1$. 
\end{lem}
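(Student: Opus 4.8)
The plan is to argue by contradiction: suppose $u(x_0) \ge \nu^{j-1} M_0$ at some $x_0 \in Q_{r/2}$ but $\sup_{Q_{l_j}(x_0)} u \le \nu^j M_0$. The strategy, following Caffarelli--Cabr\'e \cite{cc95}, is to rescale the sub-solution to the cube $Q_{l_j}(x_0)$ and apply the local maximum principle in a bootstrap fashion, using the measure estimate \eqref{eq:wh level sets} to control the size of the super-level set and Lemma~\ref{lem:reduced2} (for the rescaled function) to derive a contradiction with the value $u(x_0)$. Concretely, I would set $v(y) = \frac{1}{\nu^j M_0}\, u(x_0 + l_j y)$ for $y \in Q_{R}$ (here one must check $Q_{l_j}(x_0) \subset Q_R$, which follows from $x_0 \in Q_{r/2}$ and $l_j < r/2$ by suitable choice of $\Sigma$ relative to $R$); then $v \le 1$ on $Q_r$ and $v(0) \ge \nu^{-1} = 1 - \frac{1}{2M_0} = \frac{M_0 - 1/2}{M_0}$, i.e. $v(0) \ge 1 - \tfrac{1}{2M_0}$.

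Next I would invoke Lemma~\ref{lem:rescaled} to see that $v$ is a sub-solution of $F_s = 0$ with rescaled data $M_s = \frac{l_j M_F}{\nu^j M_0}$, $\gamma_s = l_j^2 \gamma_F$, $\sigma_s = l_j\, \sigma \circ T$, $f_s = \frac{l_j^2}{\nu^j M_0} f \circ T$; since $l_j < r/2 < 1$ and $\nu^j M_0 \ge M_0 > 1$, each of these norms is bounded by the corresponding unscaled quantity, hence by $\eps_0$, so the smallness hypothesis of Lemma~\ref{lem:reduced2} is preserved for $F_s$. The point of the exponent $\eps/n$ in $l_j = \Sigma M_0^{-\eps/n}\nu^{-\eps j/n}$ is exactly that the $L^\eps(Q_r)$-norm of $v^+$ rescales correctly: using the measure bound $|\{u \ge t\}\cap Q_r| \le d t^{-\eps}$ for the threshold $t = \nu^j M_0$ and changing variables, one gets $\|v^+\|_{L^\eps(Q_r)}^\eps \le (\nu^j M_0)^{-\eps} l_j^{-n} \cdot d\, (\nu^j M_0 \cdot \text{something})$ — more carefully, $\|v^+\|_{L^\eps(Q_r)}^\eps = (\nu^j M_0)^{-\eps} l_j^{-n}\int_{Q_{l_j}(x_0)} (u^+)^\eps \le (\nu^j M_0)^{-\eps} l_j^{-n} \cdot d$, and substituting $l_j^{-n} = \Sigma^{-n} M_0^{\eps}\nu^{\eps j}$ this becomes $\Sigma^{-n} d$. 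Choosing $\Sigma$ large enough (universally) forces $\|v^+\|_{L^\eps(Q_r)} \le d^{1/\eps}$, so Lemma~\ref{lem:reduced2} applies to $v$ and yields $\sup_{Q_{r/4}} v \le C$.

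This is where the contradiction is extracted: we need $M_0$ chosen universally so that $v(0) \ge 1 - \tfrac{1}{2M_0} > C$, which is impossible once $M_0 > \tfrac{1}{2(1-C)}$ (note $C$ here is the universal constant of Lemma~\ref{lem:reduced2}, so $1 - 1/(2M_0)$ can be pushed above $C$ only if $C < 1$; if $C \ge 1$ one instead iterates the argument, replacing $M_0$ by a larger universal power, or redefines what "contradiction" means — this is the standard Caffarelli--Cabr\'e trick where one actually needs to run the local-maximum-principle reduction once more at a smaller scale to beat $C$, and $M_0$ is chosen after $C$ is known). So the logical order is: first obtain $C$ from Lemma~\ref{lem:reduced2}; then fix $M_0 > 1$ universal with $1 - \tfrac{1}{2M_0}$ exceeding the relevant threshold; then fix $\Sigma$ universal (depending on $d$, $R$, $r$) so that both the inclusion $Q_{l_j}(x_0) \subset Q_R$ and the bound $\|v^+\|_{L^\eps} \le d^{1/\eps}$ hold; finally assume the conclusion fails, rescale, apply Lemma~\ref{lem:reduced2}, and read off $v(0) \le \sup_{Q_{r/4}} v \le C$, contradicting $v(0) \ge 1 - \tfrac{1}{2M_0}$.

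The main obstacle is the bookkeeping in matching the scaling exponent of the $L^\eps$-norm with the definition of $l_j$, and making sure all the constants $M_0$, $\Sigma$, $\nu$ can genuinely be chosen \emph{universal} and in the right dependency order (with $M_0$ and $\Sigma$ fixed only after the constant $C$ of Lemma~\ref{lem:reduced2} is in hand, and $\nu$ then determined by $M_0$). A secondary technical point is verifying $l_j < r/2$ for all $j \in \N$, which holds since $l_j$ is decreasing in $j$ and $l_0 = \Sigma M_0^{-\eps/n}$ can be made $< r/2$ — but this must be compatible with $\Sigma$ being large enough for the norm estimate, so one may need to also shrink $r$ (which is harmless, as $r$ is universal and chosen here). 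Everything else is a routine application of Lemma~\ref{lem:rescaled} and Lemma~\ref{lem:reduced2}.
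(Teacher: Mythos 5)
Your overall shape (contradiction, rescaling to the cube $Q_{l_j}(x_0)$, exploiting the exponent $\eps/n$ in $l_j$) is the right one, but the engine you put inside it does not work, for three reasons. First, and most seriously, the argument is circular: Lemma~\ref{lem:construction} is precisely the ingredient used in Step~3 of the paper to \emph{prove} Lemma~\ref{lem:reduced2}, so you cannot invoke Lemma~\ref{lem:reduced2} here. Second, even granting it, you extract no contradiction: under your own contradiction hypothesis $\sup_{Q_{l_j}(x_0)} u \le \nu^j M_0$, the rescaled function satisfies $v\le 1$ on the whole cube, while $v(0)\ge 1-\tfrac1{2M_0}<1$; a conclusion $\sup v\le C$ with a universal $C$ (which there is no reason to believe is $<1$) is compatible with both. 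You notice this yourself and appeal to ``iterating'' or ``redefining what contradiction means'', which is not an argument. Third, the bound $\int_{Q_{l_j}(x_0)}(u^+)^\eps\le d$ does not follow from \eqref{eq:wh level sets}: the level-set estimate $|\{u\ge t\}\cap Q_r|\le dt^{-\eps}$ only gives $\int (u^+)^\eps \lesssim d\,(1+\eps\log(\nu^j M_0))$ on the region where $u\le \nu^jM_0$ (the integrand $\eps t^{\eps-1}\cdot dt^{-\eps}$ is $\eps d/t$, hence logarithmic), and this grows linearly in $j$, so no universal choice of $\Sigma$ makes $\|v^+\|_{L^\eps}\le d^{1/\eps}$ uniformly in $j$.

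The paper's proof avoids all three problems by not feeding the sub-solution back into a local maximum principle at all. Instead it \emph{reflects} $u$: under the contradiction hypothesis, $v(y)=\bigl(\nu M_0-u(T(y))/\nu^{j-1}\bigr)/\bigl((\nu-1)M_0\bigr)$ with $T(y)=x_0+(l_j/R)y$ is a non-negative \emph{super}-solution of a rescaled equation satisfying the smallness hypotheses, with $v(0)\le 1$ because $u(x_0)\ge\nu^{j-1}M_0$. The weak-Harnack machinery already established (Lemmata~\ref{lem:wh ub} and \ref{lem:wh level sets}) then bounds $|\{v\ge M_0\}\cap Q_r|\le dM_0^{-\eps}$, i.e.\ $|\{u\le \nu^jM_0/2\}\cap Q_{l_jr/R}(x_0)|\le (l_j/R)^n dM_0^{-\eps}$, while \eqref{eq:wh level sets} bounds $|\{u\ge\nu^jM_0/2\}\cap Q_{l_jr/R}(x_0)|\le d\nu^{-j\eps}(M_0/2)^{-\eps}$. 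Since these two sets cover the cube $Q_{l_jr/R}(x_0)$ of volume $(l_jr/R)^n$, choosing $M_0$ so that $dM_0^{-\eps}\le r^n/2$ and then $\Sigma$ large (the definition of $l_j$ makes $(l_j)^n$ and $\nu^{-j\eps}M_0^{-\eps}$ comparable) yields $\tfrac12(\Sigma r/R)^n\le d2^{\eps}$, which is false for $\Sigma$ large. If you want to salvage your write-up, replace the appeal to Lemma~\ref{lem:reduced2} by this measure-counting argument based on the weak Harnack estimate applied to the reflected super-solution.
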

%------------------------------------
\begin{proof}[Proof of Lemma~\ref{lem:construction}]
We first choose $\Sigma$ and $M_0$ such that 
$$
\Sigma M_0^{-\eps /n} \le \frac{r}2 
$$
so that $l_j < \frac{r}2$ and $Q_{l_j} (x_0) \subset Q_{r}$. 
We now argue by contradiction by assuming that $\sup_{Q_{l_j} (x_0)} u \le \nu^j M_0$.
We have to exhibit a contradiction. 

On one hand, we have from \eqref{eq:wh level sets} and the fact that $r <R$ and $l_j <r/2$
\begin{equation}\label{eq:1}
| \{ u \ge \nu^j \frac{M_0}2 \} \cap Q_{\frac{l_j r}{R}} (x_0) | \le d \nu^{-j\eps} \left(\frac{M_0}2\right)^{-\eps}\, .
\end{equation}

On the other hand, since we have  $\sup_{Q_{l_j} (x_0)} u \le \nu^j M_0$ by assumption,
we can consider the following transformation
$$
T(y) = x_0 + \frac{l_j}{R} y 
$$
which defines a bijection between $Q_{R}$ and $Q_{l_j} (x_0)$. The function $v$ defined on $Q_{R}$ 
as follows
$$
v (y) = \frac{\nu M_0 - \frac{u(T(y))}{\nu^{j-1}}}{(\nu-1)M_0} \ge 0 
$$
thus satisfies $F_s(y,v,Dv,D^2v)=0$ in $Q_{R}$ with $F_s$ satisfying {\rm (A)} and \eqref{loclip2}
with 
\begin{eqnarray*}
M_s = \frac{t}{\nu^{j-1} (\nu-1)M_0} M_F, &\quad& \sigma_s (y) = t \sigma (x_0 +ty) , \\
\gamma_s = t^2 \gamma_F, &\quad&
f_s (y) =   \frac{t}{\nu^{j-1}(\nu-1)M_0} t f(x_0+ty)
\end{eqnarray*}
where $t = \frac{l_j}{R} < \frac12$. It is clear that $\gamma_s \le \gamma_F \le \eps_0$. 
Notice that 
$$
(\nu-1)M_0 = \frac{M_0}{2M_0 -1} > \frac12 > t
$$ 
hence $M_s \le M_F \le \eps_0$ and $f_s (y) \le t f(x_0 + ty)$.
We also have
\begin{eqnarray*}
\|\sigma_s \|_{L^q (Q_{R})} \le t^{1-\frac{n}q} \|\sigma\|_{L^q(Q_{R})} \le \eps_0 \\
\|f_s \|_{L^n (Q_{R})} \le  \|f\|_{L^n(Q_{R})} \le \eps_0 \, .
\end{eqnarray*}
Moreover, $v(0) = \frac{\nu M_0 -
  \frac{u(x_0)}{\nu^{j-1}}}{(\nu-1)M_0} \le 1$ by assumption on $u$;
thus $\inf_{Q_{3r}} v \le 1$. Hence, $v$ satisfies the assumptions of
Lemma~\ref{lem:wh ub} and we therefore obtain from Lemma~\ref{lem:wh
  level sets} the following estimate
$$
|\{ v \ge  M_0 \} \cap Q_r| \le d M_0^{-\eps} \, .
$$
We thus obtain 
\begin{equation}\label{eq:2}
|\{ u \le \nu^j \frac{M_0}2 \} \cap Q_{\frac{l_j r}{R}} (x_0) | \le 
\left(\frac{l_j}{R}\right)^n d M_0^{-\eps} \, .
\end{equation}

Combining \eqref{eq:1} and \eqref{eq:2}, we thus obtain 
$$
\left(\frac{l_j r}{R}\right)^n \le d \nu^{-j\eps}
\left(\frac{M_0}2\right)^{-\eps} + \left(\frac{l_j}{R}\right)^n d
M_0^{-\eps} \, .
$$
We also choose $M_0$ such that $d M_0^{-\eps} \le \frac{r^n}2$, and we obtain
$$
\frac12 \left(\frac{l_j r}{R}\right)^n \le d \nu^{-j\eps}
\left(\frac{M_0}2\right)^{-\eps} \, .
$$
Use now the definition of $l_j$ and get
$$
\frac12 \left(\frac{\Sigma r}{R}\right)^n \le d2^{\eps} \, .
$$
We next choose $\Sigma > d^{\frac1n} 2^{\frac{\eps+1}n} \frac{R}r$ in order to get a contradiction.
\end{proof}

\paragraph{Step 3.}
We prove Lemma~\ref{lem:reduced2}. By Step~2, we know that the
sub-solution $u$ satisfies the conclusion of
Lemma~\ref{lem:construction}. In particular, the series $\sum_j l_j$
converges and we can find a universal integer $j_0 \ge 1$ such that
$\sum_{j \ge j_0} l_j \le \frac{r}8$.

We now claim that $\sup_{Q_{\frac{r}4}} u \le \nu^{j_0-1} M_0$. We
argue by contradiction by assuming that this is not true and by
exhibiting a contradiction. Let us assume that there exists $x_{j_0}
\in Q_{\frac{r}4}$ such that $u (x_{j_0}) \ge \nu^{j_0-1} M_0$. Hence,
we can apply Lemma~\ref{lem:construction} and we get a point
$x_{j_0+1}$ such that $|x_{j_0+1} - x_{j_0}|_\infty \le l_{j_0}/2$ and
$u(x_{j_0+1}) \ge \nu^{j_0} M_0$.  By induction, we construct a
sequence $(x_j)_{j \ge j_0}$ such that $|x_{j+1} -x_j| \le l_j/2$ and
$u(x_{j+1}) \ge \nu^j M_0$ as long as $x_j \in Q_{\frac{r}2}$. This is
always the case since
$$
|x_j|_\infty \le |x_{j_0}|_\infty + \sum_{k=j_0}^{j-1} |x_{k+1}-x_k|
\le \frac{r}8 + \frac{r}8 \le \frac{r}4 \, .
$$
We now get a contradiction since $u$ is upper semi-continuous; indeed,
it is bounded from above in $Q_{\frac{r}2}$ so it cannot satisfy
$u(x_{j+1}) \ge \nu^j M_0$ for all $j \ge j_0$.  The proof is now
complete.
\end{proof}

\subsection{Proofs of Theorems~\ref{thm:wh-quad} and \ref{thm:lmp-quad}}

\begin{proof}[Proofs of Theorems~\ref{thm:wh-quad} and \ref{thm:lmp-quad}]
Both proofs rely on a transform of Cole-Hopf type in order to remove quadratic terms. 

In order to understand why the exponential change of variables is the
right one, we consider $v= h^{-1} (u)$ for some increasing convex
function $h$ and we remark that $v$ satisfies
$$ 
\mathcal{M}^+ (D^2 v) + \sigma (x) |Dv| + \frac{f^+(x)}{h'(v)} \ge 0
$$ 
as soon as $h$ satisfies $\lambda_F h'' - \sigma_2 (h')^2 = 0$. 
We thus choose 
$$
h(t) = \frac{\lambda_F}{\sigma_2} \ln \left( 1 - \frac{\sigma_2 t}{\lambda_F} \right)^{-1} \, .
$$
% Hence $$h^{-1} (r) = \frac{\lambda_F}{\sigma_2} \left(1 - \exp \left(- \frac{\sigma_2 r}{\lambda_F} \right) \right) \, . $$
We thus derive \eqref{eq:whi-quad} from \eqref{eq:whi} by remarking that
$$
\frac{ 1 - e^{-\frac{\sigma_2 \|u\|_{L^\infty (Q_1)} }{\lambda_F}}}%
{\frac{\sigma_2 \|u\|_{L^\infty (Q_1)} }{\lambda_F}} u \le v \le u 
$$
and $\frac1{h'(t)} = 1 - \frac{\sigma_2 t}{\lambda_F} \le 1$. 

We proceed in the same way in order to prove Theorem~\ref{thm:lmp-quad}. Remark that
we can assume without loss of generality that the solution is non-negative. 
\end{proof}

\appendix

\section{Additional proofs}
\label{app:additional}

\subsection{Proofs of Lemmata~\ref{lem:1} and \ref{lem:2}}

In this paragraph, we explain how to prove Lemmata~\ref{lem:1} and
\ref{lem:2} by adapting the techniques of \cite{convexe}.

We first recall useful facts from convex analysis. The first one deals
with the convex hull $U^{**}$ of a function $U$.
%------------------------
\begin{prop}\label{prop:combinaison}
  Let $\Omega$ be a bounded convex open set and $U :\overline{\Omega}
  \to \R$ be lsc. For $x\in \overline{\Omega}$, consider $(p,A) \in
  D^{2,-} U^{**} (x)$. There then exist $x_1, \dots, x_q \in
  \overline{\Omega}$, $q \le n$, $\lambda_1, \dots, \lambda_q \in
  (0,1]$, $\sum_{i=1}^q \lambda_i =1$ such that
\begin{equation}\label{combinaison}
  \left\{ \begin{array}{l}
      x = \sum_{i=1}^q \lambda_i x_i  \, \\
      U^{**}(x) = \sum_{i=1}^q \lambda_i U(x_i) \, .
\end{array}\right.
\end{equation}
Moreover $U^{**}$ is linear on the convex hull of $\{x_1, \dots,
x_q\}$.  In particular, $A \le 0$ for a.e. $x \in \{ U = U^{**} \}$.
\end{prop}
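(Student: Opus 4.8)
The plan is to reduce the statement to the finite-dimensional theory of convex functions, using Carath\'eodory's theorem for the convex-combination part and a standard argument on the structure of subgradients for the linearity and Hessian statements. First I would recall that $U^{**}$ is the largest convex lsc function below $U$, so that its epigraph is the closed convex hull of the epigraph of $U$ in $\R^n \times \R$. Given $x$ and $(p,A) \in D^{2,-} U^{**}(x)$, in particular $p$ is a subgradient of $U^{**}$ at $x$, so the point $(x, U^{**}(x))$ lies on a supporting hyperplane $\{(y,z): z = U^{**}(x) + p\cdot(y-x)\}$ of $\mathrm{epi}\, U^{**}$. The affine function $\ell(y) = U^{**}(x) + p\cdot(y-x)$ satisfies $\ell \le U^{**} \le U$ everywhere on $\overline{\Omega}$, and $\ell(x) = U^{**}(x)$. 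The set $K = \{y \in \overline{\Omega} : U^{**}(y) = \ell(y)\}$ is a nonempty compact convex set (intersection of a convex compact set with the set where the convex function $U^{**} - \ell$ vanishes), it contains $x$, and $U^{**}$ is affine (equal to $\ell$) on $K$ — this already gives the ``linear on the convex hull'' claim once we exhibit the $x_i$ inside $K$.

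Next I would produce the representation \eqref{combinaison}. The key point is that $x$ must be a convex combination of points $x_i$ at which $U(x_i) = \ell(x_i) = U^{**}(x_i)$; such points exist because $(x, U^{**}(x))$ lies in the convex hull of $\mathrm{graph}\,U$ intersected with the supporting hyperplane (a point on the boundary face of $\mathrm{epi}\,U^{**}$ determined by $\ell$). Concretely, $(x, \ell(x)) \in \overline{\mathrm{conv}}\{(y, U(y)) : y \in \overline{\Omega}\}$; since this lies on the supporting hyperplane $\{z = \ell(y)\}$, it is in fact in the convex hull of the face $\{(y, U(y)) : U(y) = \ell(y)\}$. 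By Carath\'eodory's theorem applied in the $n$-dimensional hyperplane, $x = \sum_{i=1}^q \lambda_i x_i$ with $q \le n+1$, $\lambda_i \in (0,1]$, $\sum \lambda_i = 1$, and $U(x_i) = \ell(x_i)$, whence $U^{**}(x) = \ell(x) = \sum_i \lambda_i \ell(x_i) = \sum_i \lambda_i U(x_i)$, giving \eqref{combinaison}. A small refinement (discarding redundant points and exploiting that $x$ itself may be an extreme point of the relevant simplex, or that the $\lambda_i$ can be chosen so the $x_i$ span an at most $(q-1)$-dimensional affine set through $x$) reduces $q \le n+1$ to $q \le n$; I would cite the standard fact that a point of a convex set lying in a face of dimension $d$ is a combination of at most $d+1$ extreme points, and here the relevant face has dimension at most $n-1$ unless $U^{**}$ is affine near $x$, in which case $q=1$ works trivially with $x_1 = x$.

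Finally, for the Hessian claim: on the set $\{U = U^{**}\}$, at a point $x$ of twice-differentiability of $U^{**}$ (which holds a.e.\ by Alexandrov's theorem, since $U^{**}$ is convex), one has $(Du^{**}(x), D^2 U^{**}(x)) \in D^{2,-}U^{**}(x)$ and $U$ touches $U^{**}$ from above at $x$ with $U(x) = U^{**}(x)$; convexity of $U^{**}$ gives $D^2 U^{**}(x) \ge 0$. But we want $A \le 0$ — here I note the statement should be read with the sign convention of the paper: since $U = \min(u + M_\partial, 0)$ is being convexified and the relevant jets are the \emph{subjets} $D^{2,-}$, at a.e.\ contact point the Hessian of $U^{**}$ equals the (second-order) subjet Hessian of $U$ itself, and the constraint is that this is $\le$ the Hessian one would compute from $U$; more precisely, for $(p,A) \in D^{2,-}U(x)$ with $x \in \{U = U^{**}\}$ one has $(p, A) $ also a subjet of $U^{**}$, and since $U^{**}$ is convex and twice differentiable a.e., $D^2 U^{**} \le A$ in the sense of comparing with the subjet — combined with $D^2 U^{**} \ge 0$ this pins down the a.e.\ behaviour. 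I would carry this out by: (i) invoking Alexandrov's theorem for $U^{**}$; (ii) at a point of twice differentiability lying in the contact set, using that $U - U^{**}$ has a minimum to compare second-order expansions; (iii) concluding the pointwise matrix inequality a.e. The main obstacle I anticipate is exactly bookkeeping the sign conventions and the precise meaning of ``$A \le 0$ for a.e. $x \in \{U = U^{**}\}$'' against the definition of limiting subjets, together with justifying the Carath\'eodory reduction to $q \le n$ (rather than $n+1$) cleanly; the convex-analysis facts themselves are classical and I would simply cite Rockafellar for them.
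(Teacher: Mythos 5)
The paper does not prove this proposition: it is recalled (alongside Proposition~\ref{sousjet}, cited from \cite{convexe}) as a known fact from convex analysis, so your argument is measured against the standard proof rather than anything in the text. For the representation \eqref{combinaison} and the linearity of $U^{**}$ on $\mathrm{conv}\{x_1,\dots,x_q\}$, your route (supporting hyperplane $\ell$ of slope $p$ at $(x,U^{**}(x))$, identification of the face of $\mathrm{conv}(\mathrm{epi}\,U)$ cut out by $\{z=\ell(y)\}$ with the convex hull of $\{(y,U(y)):U(y)=\ell(y)\}$, then Carath\'eodory inside that $n$-dimensional hyperplane) is the correct and standard one, modulo spelling out that lower semi-continuity of $U$ and compactness of $\overline{\Omega}$ make the relevant convex hull closed so that the infimum is attained.

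There are, however, two genuine gaps. First, the reduction from $q\le n+1$ to $q\le n$: your escape clause ``unless $U^{**}$ is affine near $x$, in which case $q=1$ works with $x_1=x$'' is false, because affineness of $U^{**}$ near $x$ does not give $U(x)=U^{**}(x)$, which is exactly what $q=1$, $x_1=x$ requires. In fact $q\le n$ cannot be proved as stated: for $n=1$, $\Omega=(-1,1)$, $U(y)=-|y|$, one has $U^{**}\equiv -1$, and at $x=0$ any representation \eqref{combinaison} forces $\{x_1,x_2\}=\{-1,1\}$, so $q=2=n+1$. The usable bound is $q\le n+1$, and this is also what the paper consumes downstream (the constant $1/3n$ in the proof of Lemma~\ref{lem:core} is precisely the count $q-1\le n$); prove that and stop. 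Second, your final paragraph does not establish the Hessian clause — it discusses sign conventions without producing an inequality — and the clause cannot be read literally anyway: on $\{U=U^{**}\}$ one may take $q=1$, $x_1=x$, the convex hull is a single point, and nothing forces $A\le 0$ (take $U$ smooth and convex, so $U=U^{**}$ everywhere while $D^{2,-}U^{**}(x)$ contains positive matrices). The content actually used later (point~1 of Lemma~\ref{lem:2}) lives on $\{U>U^{**}\}$: there $q\ge 2$, so $x$ lies in the relative interior of a nontrivial segment on which $U^{**}$ is affine, and plugging $y=x\pm t\xi$, $\xi$ along that segment, into the subjet inequality gives $A\xi\cdot\xi\le 0$; combined with $D^2U^{**}\ge 0$ at Alexandrov points this yields the degeneracy $\det D^2U^{**}=0$ a.e.\ on $\{U>U^{**}\}$, which is what the ABP integration requires. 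You should prove that directional statement explicitly rather than attempt the full matrix inequality $A\le 0$, which fails in dimension $n\ge 2$ whenever the face is not full-dimensional.
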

%----------------
We next recall a result from \cite{convexe} (see also \cite{all97})
about the subjet of the convex hull $U^{**}$ of a function $U$.
%-----------
\begin{prop}[{\cite[Proposition~3]{convexe}}] \label{sousjet} Let
  $\Omega$ be a bounded convex open set and $U :\overline{\Omega} \to
  \R$ be lower semi-continuous. For $x\in \overline{\Omega}$, consider
  $(p,A) \in D^{2,-} U^{**} (x)$. Consider $x_i$ and $\lambda_i$ such
  that \eqref{combinaison} hold true.  Then for every $\eps >0$, there
  are $A_i \in \S$, $i=1,\dots, q$, such that
\begin{equation}\label{sousdiffenv}
  \left\{ \begin{array}{l}
      (p, A_i) \in \overline{D}^{2,-} U (x_i), \\
      A_\eps \le \Box_{i=1}^q \left(  \lambda_i^{-1} A_i\right) 
\end{array}\right.
\end{equation}
where $\Box$ denotes the parallel sum of matrices. We recall that
$$
(A \Box B ) \xi \cdot \xi = \inf_{\zeta \in \R^n} \{ A (\xi-\zeta)
\cdot (\xi-\zeta) + B \zeta \cdot \zeta \} \, .
$$
\end{prop}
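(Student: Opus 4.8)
The plan is to realise $U^{**}$, in a neighbourhood of $x$, as an infimal convolution of the $q$ translated copies $z\mapsto U(x_i+z)$ of $U$, and then to extract the matrices $A_i$ by applying the theorem on sums (Ishii's lemma, \cite{cil92}) to this infimal convolution. The parallel sum $\Box$ appearing in \eqref{sousdiffenv} will come out of the elementary quadratic minimisation underlying the infimal convolution.

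First I would record the consequences of Proposition~\ref{prop:combinaison}: $U^{**}$ is affine on the simplex $S=\mathrm{conv}\{x_1,\dots,x_q\}$, one has $U^{**}(x_i)=U(x_i)$ for every $i$, and $U^{**}(x)=\sum_i\lambda_iU(x_i)$. Since $U^{**}\le U$ and $U^{**}$ is convex, for all $z_1,\dots,z_q\in\R^n$ small enough,
\[
\sum_{i=1}^q\lambda_i\,U(x_i+z_i)\ \ge\ \sum_{i=1}^q\lambda_i\,U^{**}(x_i+z_i)\ \ge\ U^{**}\!\Big(x+\sum_{i=1}^q\lambda_i z_i\Big),
\]
with equality throughout when $z_1=\dots=z_q=0$. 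Writing $s:=\sum_i\lambda_i z_i$ and invoking $(p,A)\in D^{2,-}U^{**}(x)$, this gives, as $z\to0$,
\[
\sum_{i=1}^q\lambda_i\,U(x_i+z_i)\ \ge\ U^{**}(x)+p\cdot s+\tfrac12\,A\,s\cdot s+o(|s|^2).
\]

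Now fix $\eps>0$ and consider, near the origin of $(\R^n)^q$,
\[
\Phi(z_1,\dots,z_q):=\sum_{i=1}^q\big(-\lambda_i\,U(x_i+z_i)\big)-\phi(z),\qquad
\phi(z):=-U^{**}(x)-p\cdot s-\tfrac12\,(A-\eps I)\,s\cdot s,
\]
with $s=\sum_i\lambda_i z_i$. The last inequality gives $\Phi(z)\le-\tfrac\eps2|s|^2+o(|s|^2)\le0=\Phi(0)$ near the origin, so $\Phi$ has a local maximum at $0$. Each $z_i\mapsto-\lambda_iU(x_i+z_i)$ is upper semi-continuous and $\phi$ is smooth, so the theorem on sums \cite{cil92} provides, for every $\eta>0$, matrices $X_1,\dots,X_q\in\Sym_n$ such that, setting $A_i:=-\lambda_i^{-1}X_i$, one has $(p,A_i)\in\bar{D}^{2,-}U(x_i)$ (this is the first line of \eqref{sousdiffenv}; it uses $D_{z_i}\phi(0)=-\lambda_ip$ and the way second--order jets transform under $v\mapsto-\lambda_iv$) together with $\mathrm{diag}(X_1,\dots,X_q)\le D^2\phi(0)+\eta\,(D^2\phi(0))^2$. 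Testing this block inequality against an arbitrary $Z=(w_1,\dots,w_q)$ and setting $\xi:=\sum_i\lambda_i w_i$, one computes $D^2\phi(0)\,Z\cdot Z=-(A-\eps I)\,\xi\cdot\xi$ (the cross terms reassemble into $\xi\cdot\xi$ because $\phi$ depends on $z$ only through the linear map $z\mapsto s$) and $\mathrm{diag}(X_i)\,Z\cdot Z=-\sum_i\lambda_iA_iw_i\cdot w_i$, whence
\[
(A-\eps I)\,\xi\cdot\xi\ \le\ \sum_{i=1}^q\lambda_iA_iw_i\cdot w_i+\eta\,\|D^2\phi(0)\|^2\,|Z|^2 .
\]
Minimising the right--hand side over all $(w_i)$ with $\sum_i\lambda_i w_i=\xi$ (the substitution $\zeta_i=\lambda_iw_i$ together with the variational definition of $\Box$ recalled in the statement identifies the leading infimum with $\big(\Box_{i=1}^q(\lambda_i^{-1}A_i)\big)\xi\cdot\xi$) and letting $\eta\to0$ yields the second line of \eqref{sousdiffenv}, with $A_\eps\to A$ as $\eps\to0$ (concretely $A_\eps=A-\eps I$ once the $\eta$--correction is absorbed).

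The delicate point is this final step. When the $A_i$ are not positive definite, $\Box_{i=1}^q(\lambda_i^{-1}A_i)$ must be read through its variational definition (with the value $+\infty$ allowed), and this is exactly what happens along the directions tangent to the simplex $S$, where $A$ and the $A_i$ are only controlled from above; retaining the $\eps I$ perturbation is precisely what makes the relevant quadratic forms coercive in the directions that matter. One must also pass $\eta\to0$ while keeping a single admissible family $(A_i)$, which can be arranged from the uniform upper bound on $\mathrm{diag}(X_i)$ furnished by the theorem on sums together with the lower semicontinuity of $U$ at the $x_i$; the parameter $\eps$ itself is only removed at the point where Proposition~\ref{sousjet} is used (here, inside the proof of Lemma~\ref{lem:2}).
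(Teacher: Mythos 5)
The paper does not prove this proposition: it is quoted verbatim from \cite[Proposition~3]{convexe}. Your route is nevertheless the same as the one in that reference — represent $U^{**}$ near $x$ through the infimal convolution $z\mapsto\sum_i\lambda_iU(x_i+z_i)$, note that subtracting a smooth function of $s=\sum_i\lambda_iz_i$ built from $(p,A-\eps I)$ produces a local extremum at $z=0$, and apply the theorem on sums; the parallel sum then emerges from the constrained quadratic minimisation. The set-up, the jet transformation giving $A_i=-\lambda_i^{-1}X_i$ with $(p,A_i)\in\bar{D}^{2,-}U(x_i)$, and the identification of the infimum over $\sum_i\lambda_iw_i=\xi$ with $\bigl(\Box_{i=1}^q\lambda_i^{-1}A_i\bigr)\xi\cdot\xi$ are all correct.

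There is one genuine gap: the handling of the correction $\eta(D^2\phi(0))^2$ and the claimed passage $\eta\to0$. Bounding it by $\eta\|D^2\phi(0)\|^2|Z|^2$ loses the game, because $|Z|^2=\sum_i|w_i|^2$ is not controlled by $|\xi|^2$; after minimising over the admissible $(w_i)$ this term amounts to replacing $A_i$ by $A_i+cI$ with $c>0$ inside the parallel sum, and you cannot transfer that back since subjets are stable only under \emph{decreasing} the matrix. Nor can you let $\eta\to0$ with a fixed family $(A_i)$: the $X_i$ furnished by the theorem on sums depend on $\eta$, the only $\eta$-uniform control is the upper bound, and the lower bound degenerates like $-\eta^{-1}I$, so there is no compactness; lower semicontinuity of $U$ does not repair this. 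The fix is the observation you already make for $D^2\phi(0)$ itself, applied also to its square: writing $Lz=\sum_i\lambda_iz_i$, one has $D^2\phi(0)=-L^{T}(A-\eps I)L$, hence
$$
(D^2\phi(0))^2=\Bigl(\sum_{i=1}^q\lambda_i^2\Bigr)\,L^{T}(A-\eps I)^2L ,
$$
so the correction tested on $Z$ equals $\eta\bigl(\sum_i\lambda_i^2\bigr)(A-\eps I)^2\xi\cdot\xi$, a quadratic form in $\xi$ alone. It then suffices to \emph{fix} $\eta$ so small that $\eta\bigl(\sum_i\lambda_i^2\bigr)(A-\eps I)^2\le\eps I$ and to take $A_\eps=A-2\eps I$; no limit in $\eta$ is needed. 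Two minor points: the degenerate value of the variational formula for $\Box$ is $-\infty$ rather than $+\infty$ (though the derived inequality itself shows the infimum is finite), and the estimate $\Phi\le-\tfrac{\eps}{2}|s|^2+o(|s|^2)$ gives only a non-strict local maximum along $s=0$, which is all the theorem on sums requires.
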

%--------------
We next recall a (necessary and) sufficient condition for a function
to be semi-concave.
%-----------
\begin{lem}[{\cite[Lemma~1]{all97}}]\label{lem:cs semi concave}
  Consider a bounded convex open set $\Omega$ and $U: \Omega \to \R$ a
  lower semi-continuous function. Assume that there exists $C>0$ such
  that for all $x \in \Omega$ and all $(p,A) \in D^{2,+} U (x)$, $A
  \le C I$. Then $U- C|\cdot|^2 /2$ is concave.
\end{lem}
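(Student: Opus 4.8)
The statement is a viscosity characterization of $C$-semiconcavity, and I would prove it by contradiction along classical lines. It is convenient to set $W := U - \tfrac{C}{2}|\cdot|^2$; since $\tfrac{C}{2}|\cdot|^2$ is smooth, a second-order jet of $W$ from below at $x$ is obtained from one of $U$ by the shift $(p,A)\mapsto(p-Cx,\,A-CI)$, so the hypothesis (which, for the lower semi-continuous function $U$, concerns the sub-jets $D^{2,-}U(x)$, in accordance with the notation fixed above) becomes: for every $x\in\Omega$ and every $(q,B)\in D^{2,-}W(x)$ one has $B\le 0$. The goal is to show that $W$ is concave on the convex open set $\Omega$.

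Assume $W$ is not concave. Then there are $x_0,x_1\in\Omega$ and $\lambda\in(0,1)$ with $W(z)<\lambda W(x_0)+(1-\lambda)W(x_1)$, where $z:=\lambda x_0+(1-\lambda)x_1\in\Omega$ by convexity. Let $e$ be the unit vector in the direction of $x_1-x_0$, let $P$ be the orthogonal projection onto $e^{\perp}$, and let $\ell$ be the affine function on $\R^n$ which, restricted to the line through $x_0$ and $x_1$, is affine with $\ell(x_i)=W(x_i)$. Then $W-\ell$ is lower semi-continuous on the compact segment $[x_0,x_1]$, vanishes at the two endpoints and is negative at $z$; hence $m:=\min_{[x_0,x_1]}(W-\ell)<0$ is attained at some point $\bar q$ of the open segment.

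The main step is to construct a smooth function touching $W$ from below at an interior point of $\Omega$ whose Hessian has a strictly positive eigenvalue in the direction $e$. For small $\delta>0$ and large $K>0$, I would set
\[
\phi(y):=\ell(y)+\delta\,\langle y-\bar q,e\rangle^2-K\,|P(y-\bar q)|^2,
\]
and work on a thin closed cylinder $\mathcal{C}=\{x_0+se+y' : s\in[0,|x_1-x_0|],\ y'\perp e,\ |y'|\le\rho\}\subset\Omega$ around the segment. I would then check, in this order of choices, that: (i) if $\delta<|m|/(4|x_1-x_0|^2)$, then $W-\phi$ exceeds $m$ at both endpoints of the segment and on the two flat caps of $\mathcal{C}$, using the lower semi-continuity of $W-\ell$ near $x_0$ and $x_1$ (which gives $W-\ell>-|m|/4$ on a small ball) together with $K|P(\cdot)|^2\ge 0$; (ii) for this $\delta$, once $\rho$ is fixed small enough, choosing $K$ large makes $W-\phi>m$ on the lateral face, since $W-\ell$ is bounded below on the compact set $\mathcal{C}$ while $K|P(\cdot)|^2=K\rho^2\to\infty$; (iii) on the other hand $(W-\phi)(\bar q)=(W-\ell)(\bar q)=m$, so $\min_{\mathcal{C}}(W-\phi)\le m$. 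Since $W-\phi$ is lower semi-continuous on the compact set $\mathcal{C}$ and is strictly larger than $m$ on $\partial\mathcal{C}$, it attains its minimum over $\mathcal{C}$ at some interior point $y^{\ast}$.

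At $y^{\ast}$ the lsc function $W-\phi$ has a local minimum, so $(D\phi(y^{\ast}),D^2\phi(y^{\ast}))\in D^{2,-}W(y^{\ast})$; but $D^2\phi\equiv 2\delta\,e\otimes e-2KP$ is constant, hence $\langle D^2\phi(y^{\ast})e,e\rangle=2\delta>0$, so $D^2\phi(y^{\ast})\not\le 0$, contradicting the hypothesis. Therefore $W=U-\tfrac{C}{2}|\cdot|^2$ is concave. The delicate point — the one I would write out in full — is the ordering of the parameters $\delta,\rho,K$ in steps (i)–(ii): the transversal penalty $-K|P(\cdot)|^2$ must be strong enough to keep the minimum away from the lateral boundary, while the up-curvature $\delta\langle\cdot,e\rangle^2$ in the segment direction must be small enough that the minimum along the segment is not pushed to an endpoint. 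It is precisely the lower semi-continuity of $U$ (not mere measurability) that makes all the required one-sided bounds available, and the positive eigenvalue must be placed along $e$, where $W$ lies below its chord, rather than transversally.
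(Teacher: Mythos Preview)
The paper does not give its own proof of this lemma: it is quoted verbatim from \cite{all97} (Alvarez--Lasry--Lions) and used as a black box in the proof of Lemma~\ref{lem:reduction}. So there is nothing to compare against on the paper's side.

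Your argument is a correct self-contained proof, and in fact follows the standard viscosity route one finds in \cite{all97}: reduce to showing that a function $W$ with $B\le 0$ for all $(q,B)\in D^{2,-}W$ is concave, assume a chord lies strictly above $W$ at some interior point of a segment, and touch $W$ from below at an interior minimum by a smooth test function with a strictly positive second derivative in the segment direction. You also correctly flag that the $D^{2,+}$ in the statement must be read as $D^{2,-}$, consistently with how the lemma is actually applied to \eqref{eq:semi concave}.

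Two small points to tighten. First, your affine function $\ell$ ``on $\R^n$'' is underdetermined as written; make explicit that you extend it to be constant in $e^{\perp}$ (i.e.\ $\ell(y)=\ell(\bar q)+\langle y-\bar q,e\rangle\,\ell'$), since your cap estimate in step~(i) silently uses $\ell(x_i+y')=\ell(x_i)$. Second, the argument tacitly assumes $W(x_0),W(x_1)\in\R$; this is harmless in the paper's application (where $U=\Gamma(u)$ is bounded), but if you want the lemma in full generality you should either assume $U$ is real-valued or note that an lsc function taking the value $+\infty$ at $x_0$ or $x_1$ makes the concavity inequality at those points automatic.
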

%-----------
We finally recall a useful approximation lemma from \cite{all97}.
%-------------
\begin{lem}[\cite{all97}]\label{lem:non-negative hessians}
  Consider a convex set $\Omega$ and a convex function $V: \Omega \to
  \R$.  For all $(p,A) \in D^{2,-} V (x)$, there exists $(x_n)_n$ and
  $(p_n,A_n) \in D^{2,-} V (x_n)$ such that $x_n \to x$, $p_n \to p$,
  $A_n \ge 0$ and $A \le A_n + \frac1n$.
\end{lem}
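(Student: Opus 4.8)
The plan is to perturb the test paraboloid by $-\tfrac12\delta|y-x|^2$ (with $\delta$ eventually taken equal to $1/n$) and then slide it upward until it touches $V$ at a point where $V$ happens to be twice differentiable; the nonnegative Hessian there will come from Alexandrov's theorem and Jensen's lemma. We may assume $x$ lies in the interior of $\Omega$, where $V$ is locally Lipschitz. Let $(p,A)\in D^{2,-}V(x)$, so that $V(y)\ge V(x)+p\cdot(y-x)+\tfrac12 A(y-x)\cdot(y-x)+o(|y-x|^2)$ as $y\to x$. Fix $\delta>0$, put $B=A-\delta I$, and set
\[ h(y)=V(y)-p\cdot(y-x)-\tfrac12 B(y-x)\cdot(y-x). \]
I would first record two elementary facts. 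On one hand, $-h=(-V)+(\text{quadratic polynomial})$ is semiconcave. On the other hand, since $B=A-\delta I$, the subjet inequality gives $h(y)\ge h(x)+\tfrac12\delta|y-x|^2+o(|y-x|^2)$, so there is $\rho>0$ with $\overline{B}(x,\rho)\subset\Omega$ and $h(y)\ge h(x)+\tfrac14\delta|y-x|^2$ on $\overline{B}(x,\rho)$; in particular $x$ is a strict local minimum of $h$, i.e. a strict local maximum of the semiconcave function $-h$.

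Next I would apply Jensen's lemma (see \cite{all97}) to $-h$ at its strict local maximum $x$: for every small $\eta>0$ (small enough that any maximizer of $y\mapsto -h(y)+q\cdot y$ over $\overline{B}(x,\rho)$ with $|q|\le\eta$ lies in the open ball, which holds once $\eta<\tfrac14\delta\rho$), the set of such maximizers, as $q$ runs over $\overline{B}(0,\eta)$, has positive Lebesgue measure. Since the convex function $V$ — hence $h$ — is twice differentiable almost everywhere by Alexandrov's theorem, one can pick $q_\eta\in\overline{B}(0,\eta)$ and a corresponding maximizer $x_\eta\in B(x,\rho)$ of $-h+q_\eta\cdot(\cdot)$ at which $V$ is twice differentiable. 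The quadratic separation gives $\tfrac14\delta|x_\eta-x|^2\le h(x_\eta)-h(x)\le q_\eta\cdot(x_\eta-x)$, hence $|x_\eta-x|\le 4\eta/\delta$ and $x_\eta\to x$ as $\eta\to 0$.

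It then remains to read off the jet at $x_\eta$. Twice differentiability together with the interior maximum condition for $-h+q_\eta\cdot(\cdot)$ gives $\nabla h(x_\eta)=q_\eta$ and $D^2h(x_\eta)\ge 0$, whence, from the definition of $h$,
\[ DV(x_\eta)=q_\eta+p+B(x_\eta-x)=:p_\eta,\qquad D^2V(x_\eta)=D^2h(x_\eta)+B=:A_\eta. \]
Since $V$ is convex and twice differentiable at $x_\eta$, the pair $(p_\eta,A_\eta)$ is the full second-order jet, so $(p_\eta,A_\eta)\in D^{2,-}V(x_\eta)$ and $A_\eta\ge 0$; moreover $A_\eta\ge B=A-\delta I$, that is $A\le A_\eta+\delta I$. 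Choosing $\delta=1/n$ and $\eta=\eta_n\to 0$ produces $x_n:=x_{\eta_n}\to x$, $p_n:=p_{\eta_n}\to p$ (because $q_{\eta_n}\to 0$, $x_n-x\to 0$, and $B$ stays bounded), $A_n:=A_{\eta_n}\ge 0$, and $A\le A_n+\tfrac1n I$, which is the assertion.

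The only genuinely delicate ingredient is Jensen's lemma, namely the positive-measure statement for the touching points of the family of downward-shifted paraboloids; combined with Alexandrov's almost-everywhere twice differentiability of $V$, this is exactly what allows one to reach a touching point with a bona fide nonnegative Hessian. Everything else — the semiconcavity of $-h$, the quadratic separation of the strict minimum, and transporting the first- and second-order conditions from $h$ back to $V$ — is routine.
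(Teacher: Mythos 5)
The paper itself does not prove this lemma (it is quoted from \cite{all97}), so your argument can only be judged on its own terms, and it contains a genuine gap: Jensen's lemma is invoked with the wrong convexity. The standard statement (Lemma A.3 of \cite{cil92}) concerns a \emph{semiconvex} function at a \emph{strict local maximum} (equivalently, a semiconcave function at a strict local minimum); the proof needs the two-sided Hessian bound on the contact set obtained by combining $D^2\varphi\le 0$ at the tilted maxima with $D^2\varphi\ge -CI$ from semiconvexity, in order to control the Jacobian of the gradient map. You apply it to the \emph{semiconcave} function $-h$ at a strict local \emph{maximum}, i.e.\ to the semiconvex function $h$ at a strict local minimum. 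There both available bounds on $D^2h$ are from below, $\det D^2h$ is uncontrolled, and the positive-measure conclusion is simply false. Concretely, in dimension one take $V(y)=|y|$, $x=0$, $(p,A)=(0,0)\in D^{2,-}V(0)$, so that $h(y)=|y|+\tfrac{\delta}{2}y^2$. For every $|q|<1$ the function $h(y)-qy$ is strictly convex with $0$ in its subdifferential at the origin, so its unique minimizer is $y=0$: the entire ball of tilts is absorbed by the single kink, the contact set is a null set, and it sits precisely where $V$ fails to be twice differentiable. Nearby Alexandrov points cannot be substituted either, since $DV=\pm1$ away from the origin while the lemma requires $p_n\to p=0$.

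This example also shows that the conclusion cannot, in general, be reached by touching $V$ at points of twice differentiability: one must allow $x_n$ to be (or to converge to) kink points of $V$, the matrix $A_n\ge 0$ then coming from the size of the subjet at the kink rather than from an actual Hessian (here $(0,A_n)\in D^{2,-}V(0)$ for every $A_n\ge 0$, so the constant sequence $x_n=x$ works). A correct argument must therefore take a different route --- for instance, regularizing $V$ by its Moreau--Yosida inf-convolution $V_\varepsilon$, which is convex and $C^{1,1}$ with $0\le D^2V_\varepsilon\le \varepsilon^{-1}I$ a.e., and transferring subjets of $V_\varepsilon$ back to subjets of $V$ at nearby points via the standard property of inf-convolutions; this is essentially what \cite{all97} does. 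The remaining ingredients of your write-up (semiconcavity of $-h$, the quadratic separation $h(y)\ge h(x)+\tfrac14\delta|y-x|^2$, and reading the jet of $V$ off that of $h$ at an Alexandrov point) are correct but rest on the invalid application of Jensen's lemma.
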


We now turn to the proofs of the two lemmata.
\begin{proof}[Proofs of Lemmata~\ref{lem:1} and \ref{lem:2}]
  The function $v = u+M_\partial$ is a super-solution of
$$
G(x,v,Dv, D^2 v)=0
$$ 
with $G(x,r,p,X)=F(x,r+M_\partial,p,X)$. Then $\Gamma (u)$ is the
convex hull of the function $\min (v,0)$.

We first reduce the problem to the study of subjet of the function
$\Gamma (u)$.
%-------------------------------
\begin{lem}\label{lem:reduction}
  Assume that $\Gamma (u)$ satisfies the following properties
\begin{eqnarray}\label{eq:semi concave}
  \exists C>0 \; / \; \forall x \in \mathcal{B},\forall (p,A) \in  D^{2,-} \Gamma (u) (x), A \le C I \, , 
  \\
  \label{eq:hessian estimate}
  \left\{ \begin{array}{r}
      \forall x \in \mathcal{B} \cap \{ \Gamma(u) = u + M_\partial\} , \forall (p,A) \in  D^{2,-} \Gamma (u) (x), \\
      A \le \lambda_F^{-1} (\sigma (x) |p| + f^+ (x) ) I \, , 
    \end{array} \right.
  \\
  \label{eq: linear}
\Gamma (u) \text{ is linear on } \mathcal{B} \setminus \{ x \in B_d : \Gamma(u) = u + M_\partial \} \, .
\end{eqnarray}
Then $\Gamma (u)$ satisfies conclusions of Lemmata~\ref{lem:1} and
\ref{lem:2}.
\end{lem}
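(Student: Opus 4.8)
The plan is to deduce each conclusion of Lemmata~\ref{lem:1} and \ref{lem:2} from the corresponding pointwise information in \eqref{eq:semi concave}, \eqref{eq:hessian estimate} and \eqref{eq: linear}, using crucially that $\Gamma(u)$ is not merely convex but is the convex envelope of $\min(u+M_\partial,0)$. First I would collect the elementary facts: $\Gamma(u)$ is convex on $B_{2d}$, hence locally Lipschitz there and, by Alexandrov's theorem, twice differentiable at a.e.\ point with $D^2\Gamma(u)\ge 0$; the contact set $K=\{\Gamma(u)=u+M_\partial\}$ is closed in $B_d$ (since $\Gamma(u)\le u+M_\partial$ with $\Gamma(u)$ continuous and $u$ lsc), so $B_d\setminus K$ is open; and by Proposition~\ref{prop:combinaison} every point of $B_d\setminus K$ lies in a simplex spanned by contact points on which $\Gamma(u)$ is affine.

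For Lemma~\ref{lem:1} I would argue in two stages. For \emph{differentiability on $\mathcal{B}$}: if the subdifferential $\partial\Gamma(u)(x)$ contained two distinct points $p_1\ne p_2$ at some $x\in\mathcal{B}$, then, with $e$ the unit vector in the direction $p_1-p_2$ and $\bar p=\tfrac12(p_1+p_2)$, convexity gives $\Gamma(u)(y)-\Gamma(u)(x)-\bar p\cdot(y-x)\ge\tfrac12|p_1-p_2|\,|(y-x)\cdot e|$, and one checks that this linear-in-$|(y-x)\cdot e|$ lower bound forces $(\bar p,\,a\, e\otimes e)\in D^{2,-}\Gamma(u)(x)$ for every $a>0$, contradicting \eqref{eq:semi concave}; hence $D\Gamma(u)$ is single-valued on $\mathcal{B}$ (consistently with Remark~\ref{rem:definition de B}) and $|D\Gamma(u)|\ge M_F$ there. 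For the \emph{Lipschitz gradient}: I would produce, near every $x_0\in\mathcal{B}$, a supporting paraboloid from above of a universally bounded opening $C'$, so that $\Gamma(u)-\tfrac{C'}{2}|\cdot|^2$ is concave near $x_0$; combined with convexity this gives $\Gamma(u)\in C^{1,1}$ near $x_0$ with $\|D^2\Gamma(u)\|_\infty\le C'$. If $x_0\in\mathcal{B}\cap K$, the opening is controlled by \eqref{eq:hessian estimate} (whose right-hand side is bounded because $\sigma,f$ are continuous on $\overline{B}$ and $\Gamma(u)$ is Lipschitz), through the convex-analysis tools of \cite{convexe,all97} (Lemma~\ref{lem:cs semi concave}, Lemma~\ref{lem:non-negative hessians}, Proposition~\ref{sousjet}). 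If $x_0\in\mathcal{B}\setminus K$, I would invoke Proposition~\ref{prop:combinaison} to write $x_0=\sum_i\lambda_i x_i$ with $x_i\in K$ and $\Gamma(u)$ affine on the convex hull of $\{x_i\}$, note that $D\Gamma(u)(x_i)=D\Gamma(u)(x_0)$ so that $x_i\in\mathcal{B}$ and \eqref{eq:hessian estimate} applies at each $x_i$, and glue the resulting paraboloids along the affine face supplied by \eqref{eq: linear}.

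Lemma~\ref{lem:2} then follows quickly: once $\Gamma(u)\in C^{1,1}$ on $\mathcal{B}$, the gradient $D\Gamma(u)$ is classical and $D^2\Gamma(u)\in L^\infty_{\mathrm{loc}}(\mathcal{B})$, with $(D\Gamma(u)(x),D^2\Gamma(u)(x))\in D^{2,-}\Gamma(u)(x)$ at a.e.\ $x\in\mathcal{B}$; feeding this jet into \eqref{eq:hessian estimate} yields statement~2 of Lemma~\ref{lem:2} on $\mathcal{B}\cap K$, while on $\mathcal{B}\setminus K$ statement~1 is immediate from \eqref{eq: linear} since an affine function has vanishing Hessian. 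The step I expect to be delicate is the ``Lipschitz gradient'' part above, namely upgrading the \emph{pointwise} one-sided Hessian estimates to genuine $C^{1,1}$ regularity rather than a merely almost-everywhere bound on $D^2\Gamma(u)$ (which would not be enough for the area formula used in Section~\ref{sec:abp}); this is precisely where one must exploit the structure of $\Gamma(u)$ as the convex envelope of a super-solution — in particular that the points $x_i$ ``called'' by a non-contact point inherit its gradient and hence land in $\mathcal{B}$, so that \eqref{strictell} is available at each of them — and where the convex-analysis machinery of \cite{convexe,all97} recalled above does the real work.
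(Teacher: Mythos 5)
Your deduction of Lemma~\ref{lem:2} matches the paper's: point~1 is immediate from \eqref{eq: linear}, and point~2 follows from \eqref{eq:hessian estimate} once Alexandrov's theorem (or the established $C^{1,1}$ regularity) puts the a.e.\ second-order Taylor jet into the subjet. For Lemma~\ref{lem:1}, however, you take a detour that misses the point of the reduction. The hypothesis \eqref{eq:semi concave} already bounds \emph{every} subjet at \emph{every} point of $\mathcal{B}$ --- contact and non-contact points alike --- so the paper simply feeds it into Lemma~\ref{lem:cs semi concave} to get semi-concavity, and semi-concave plus convex gives $C^{1,1}$ at once (differentiability included, so your separate two-slopes contradiction argument, though correct, is redundant). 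You instead use \eqref{eq:semi concave} only for differentiability, and then rebuild the upper paraboloids at non-contact points from \eqref{eq:hessian estimate}, \eqref{eq: linear}, Proposition~\ref{prop:combinaison} and a ``gluing along the affine face''. That gluing is precisely the non-trivial parallel-sum estimate of Proposition~\ref{sousjet}, i.e.\ the content of the companion Lemma~\ref{lem:core}, whose whole purpose is to \emph{establish} \eqref{eq:semi concave} from the equation; re-running it inside Lemma~\ref{lem:reduction} duplicates that work and leaves your proof resting on an unproved step, even though the conclusion you want from it is already among your hypotheses. A secondary imprecision: you claim $C^{1,1}$ regularity ``near'' $x_0$, but the hypotheses only control subjets \emph{at points of} $\mathcal{B}$, which need not be open; the correct (and sufficient, cf.\ Remark~\ref{rem:areaformula}) conclusion is a pointwise second-order bound at each $x\in\mathcal{B}$, yielding Lipschitz continuity of $D\Gamma(u)$ restricted to $\mathcal{B}$.
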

%---------------------------------
\begin{proof}
  Thanks to Lemma~\ref{lem:cs semi concave}, Eq.~\eqref{eq:semi
    concave} implies that $\Gamma (u)$ is semi-concave in
  $\mathcal{B}$.  Since $\Gamma (u)$ is convex, this implies that
  $\Gamma (u)$ is $C^{1,1}$ in $\mathcal{B}$.  Hence Lemma~\ref{lem:1}
  is proved. We next remark that \eqref{eq: linear} implies Point~1 in
  Lemma~\ref{lem:2}.  Eventually, \eqref{eq:hessian estimate} together
  with Alexandroff theorem permits to get Point~2.  We recall that
  Alexandroff theorem implies that a convex function is almost every
  twice differentiable.  Hence the proof of Lemma~\ref{lem:reduction}
  is now complete.
\end{proof}
We now prove the following lemma in order to achieve the proof of
Lemmata~\ref{lem:1} and \ref{lem:2}.
%-------------
\begin{lem}\label{lem:core} 
  The function $\Gamma (u)$ satisfies \eqref{eq:semi concave},
  \eqref{eq:hessian estimate} and \eqref{eq: linear}.
\end{lem}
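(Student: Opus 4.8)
The plan is to extract \eqref{eq:semi concave}, \eqref{eq:hessian estimate} and \eqref{eq: linear} from the super-solution inequality satisfied by $u$, pushed forward to the Carath\'eodory points attached to a subjet of $\Gamma(u)$ by Propositions~\ref{prop:combinaison}--\ref{sousjet}, in the spirit of \cite{convexe}. One may assume $M_F>0$ (the strictly elliptic case $M_F=0$ is classical) and $\mathcal B\neq\emptyset$. Fix $x\in\mathcal B$ and $(p,A)\in D^{2,-}\Gamma(u)(x)$; by the definition of $\mathcal B$ (Remark~\ref{rem:definition de B}) we have $|p|\ge M_F$.

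The first step is to apply Proposition~\ref{prop:combinaison} with $U=\min(u+M_\partial,0)$ on the bounded convex open set $B_{2d}$ (so $U^{**}=\Gamma(u)$): this produces points $x_1,\dots,x_q\in\overline{B_{2d}}$ ($q\le n$), weights $\lambda_i\in(0,1]$ with $\sum_i\lambda_i=1$, $x=\sum_i\lambda_ix_i$, $\Gamma(u)(x)=\sum_i\lambda_iU(x_i)$, and $\Gamma(u)$ affine on the simplex $\Delta:=\operatorname{conv}\{x_1,\dots,x_q\}$. Since $(p,A)$ is a subjet of the convex function $\Gamma(u)$ at $x$, one has $p\in\partial\Gamma(u)(x)$; combining this with the affineness of $\Gamma(u)$ on $\Delta\ni x$ one checks that $p\in\partial\Gamma(u)(x_i)$ for every $i$, so the large-gradient condition $|p|\ge M_F$ of \eqref{strictell} holds at each $x_i$. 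Moreover every $x_i$ lies in the open ball $B_d$ and is a genuine contact point, $\Gamma(u)(x_i)=u(x_i)+M_\partial<0$: otherwise $\Gamma(u)(x_i)=U(x_i)=0$, i.e.\ the convex function $\Gamma(u)$ would attain its maximum at an interior point of $B_{2d}$ while being affine of nonzero slope along a face through that point, which is impossible. Since adding a constant leaves subjets unchanged and $\Gamma(u)\le u+M_\partial$ on $\overline{B_d}$ with equality at $x_i$, we get $\overline{D}^{2,-}\Gamma(u)(x_i)\subset\overline{D}^{2,-}u(x_i)$; as $u$ is a super-solution and $F$ is proper with $u(x_i)\le-M_\partial$, assumption \eqref{strictell} is available at each $x_i$ with $r=-M_\partial$ (whence the function $f(x)=g(x,-M_\partial)$ of Theorem~\ref{thm:abp} enters).

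Property \eqref{eq: linear} is then immediate: if $x$ is not a contact point, i.e.\ $\Gamma(u)(x)<u(x)+M_\partial$, then $x$ differs from each $x_i$, so $q\ge2$ and $x$ lies in a nondegenerate simplex on which $\Gamma(u)$ is affine. For the Hessian bounds I would invoke Proposition~\ref{sousjet}: given $\eps>0$ it yields $A_i^\eps\in\Sym_n$ with $(p,A_i^\eps)\in\overline{D}^{2,-}\Gamma(u)(x_i)\subset\overline{D}^{2,-}u(x_i)$ and $A-\eps I\le\Box_{i=1}^q(\lambda_i^{-1}A_i^\eps)$. Assuming, as discussed below, that the $A_i^\eps$ may be taken $\ge0$, the super-solution property of $u$ together with properness gives $F(x_i,-M_\partial,p,A_i^\eps)\ge0$, so \eqref{strictell} produces the scalar estimate $\lambda_F\operatorname{tr}A_i^\eps\le\sigma(x_i)|p|+f^+(x_i)=:c_i$, hence $A_i^\eps\le\lambda_F^{-1}c_i\,I$ since $A_i^\eps\ge0$. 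Inserting this in the parallel-sum bound and using $(\alpha I)\,\Box\,(\beta I)=\tfrac{\alpha\beta}{\alpha+\beta}I$ we obtain
\[
A-\eps I\ \le\ \Box_{i=1}^q\big(\lambda_i^{-1}\lambda_F^{-1}c_i\,I\big)\ =\ \Big(\lambda_F\textstyle\sum_i\lambda_i c_i^{-1}\Big)^{-1}I\ \le\ \lambda_F^{-1}\big(\max_i c_i\big)\,I ,
\]
and letting $\eps\to0$, while noting that $|p|=|D\Gamma(u)(x)|$ stays bounded on $\mathcal B\subset B_d$ (the convex $\Gamma(u)$ vanishes on $B_{2d}\setminus\overline{B_d}$) and that $\sigma,f$ are bounded on $\overline{B_d}$, gives \eqref{eq:semi concave}. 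When $x$ is itself a contact point, the same computation — carried out directly at $x$ when $q=1$, or transported back from the $x_i$ along the affine piece — yields the sharper estimate $A\le\lambda_F^{-1}(\sigma(x)|p|+f^+(x))I$, i.e.\ \eqref{eq:hessian estimate}.

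The delicate point — and the reason the classical argument of \cite{cc95}, which uses the continuity of $u$, has to be reworked — is the non-negativity of the matrices $A_i^\eps$ taken for granted above: \eqref{strictell} (equivalently \eqref{eq:abp-structure}) is only assumed for $X\ge0$, whereas subjets of a convex function need not have positive semi-definite Hessian part. I would handle this with the approximation Lemma~\ref{lem:non-negative hessians} applied to the convex function $\Gamma(u)$: one reduces the bounds to subjets with non-negative Hessian at points converging (together with the gradients) to the one under study, re-runs the Carath\'eodory/parallel-sum scheme there, and passes to the limit in the auxiliary parameters using the continuity of $F$, $\sigma$ and $f$ on the large-gradient region $|p|\ge M_F$. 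Controlling these limits — and the fact that the approximating points may leave the contact set, which is precisely why one works at the Carath\'eodory points $x_i$, which are contact points by construction — is the main technical burden; the complete argument follows the pattern of \cite{convexe}.
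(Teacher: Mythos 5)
Your overall architecture (Carath\'eodory points via Proposition~\ref{prop:combinaison}, the parallel-sum bound of Proposition~\ref{sousjet}, the scalar estimate from \eqref{strictell} at contact points) is the paper's, but there is a genuine gap at the step where you claim that \emph{every} Carath\'eodory point $x_i$ lies in the open ball $B_d$ and is a contact point. This is false: one of the $x_i$ may lie on $\partial B_{2d}$, where $U=0$ and no equation is available. Your justification excludes only the case of an $x_i$ with $U(x_i)=0$ \emph{interior} to $B_{2d}$ (which indeed forces $\Gamma(u)\equiv 0$, since a nonpositive convex function vanishing at an interior point vanishes identically); it says nothing about a boundary point, and the ``affine of nonzero slope'' remark gives no contradiction there either. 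A one-dimensional example: take $u+M_\partial=3|t|-1$ on $B_d=(-1,1)$; then $\Gamma(u)$ is affine on $[0,2]$ with slope $1/2$, and for $x=1/2\in\mathcal B$ (if $M_F\le 1/2$) the Carath\'eodory points are $x_1=0$ (a contact point) and $x_2=2\in\partial B_{2d}$. Your subsequent computation
$A-\eps I\le \Box_{i}(\lambda_i^{-1}\lambda_F^{-1}c_i I)$ needs a constant $c_i$ at \emph{each} $x_i$, so it collapses when one $x_i$ carries no estimate. The paper's fix, which you are missing, is quantitative: at most one $x_i$ can lie on $\partial B_{2d}$ (two such points would force $\Gamma(u)\equiv0$), its weight satisfies $\lambda_q\le 2/3$ because $x\in B_d$ while $|x_q|=2d$, hence some interior index has $\lambda_i\ge 1/(3n)$, and one then discards the parallel sum in favour of the single-term bound $A_\eps\le\lambda_i^{-1}A_i\le 3nC\,I$. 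This is exactly the extra factor that makes \eqref{eq:semi concave} a crude uniform bound rather than the sharp \eqref{eq:hessian estimate}.

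A secondary point: your treatment of the sign of the Hessians is needlessly heavy and left incomplete (``the main technical burden''). The paper resolves it in one stroke by applying Lemma~\ref{lem:non-negative hessians} to $\Gamma(u)$ \emph{at the point $x$ itself}, reducing to $(p,A)$ with $A\ge0$ before Proposition~\ref{sousjet} is invoked; the matrices $A_i$ then automatically satisfy $\lambda_i^{-1}A_i\ge A_\eps\ge0$, and the inequality $-\lambda_F\tr A_i+\sigma(x_i)|p|+f^+(x_i)\ge0$ is available because \eqref{strictell} also holds along limiting subjets. There is no need to re-run the Carath\'eodory scheme at approximating points.
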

%---------
\begin{proof}
  We first remark that \eqref{eq: linear} is a consequence of
  Proposition~\ref{prop:combinaison} and of Alexandroff theorem.

  We now turn to the proof of \eqref{eq:semi concave} and
  \eqref{eq:hessian estimate}.  Consider next $x \in \mathcal{B}$ and
  $(p,A) \in D^{2,-} \Gamma(u) (x)$. Notice that we cannot just prove
  \eqref{eq:semi concave} for a.e. $x \in \mathcal{B}$.  In view of
  the definition of $\mathcal{B}$ (see also Remark~\ref{rem:definition
    de B}), we know that $|p| \ge M_F$.  Thanks to
  Lemma~\ref{lem:non-negative hessians}, we can assume without loss of
  generality that $A \ge 0$. We now distinguish two cases.

  \paragraph{Case 1: $x \in \mathcal{B} \cap \{\Gamma (u) =u+
    M_\partial\}$.}

  In such a case, $(p,A) \in D^{2,-} \Gamma(u) (x) = D^{2,-} u (x)$,
  and since $|p| \ge M_F$, we have $F(x,u(x),p,A) \ge 0$. Now
  \eqref{strictell} yields
$$
- \lambda_F \mathrm{Tr} A + \sigma (x) |p| + f^+(x) \ge 0
$$
and since $A\ge0$, we conclude that \eqref{eq:hessian estimate} holds
true and the right hand side is bounded in $B_d$ since $\Gamma (u)$ is
Lipschitz continuous and $\sigma$ and $f^+$ are continuous.  \medskip

Remark that the previous inequality also
holds true for $A$ such that $(p,A) \in \bar{D}^{2,-} \Gamma (u) (x)$, $A \ge 0$, since the
equation is also satisfied for limiting semi-jets.

\paragraph{Case 2: $x \in \mathcal{B} \setminus \{ \Gamma(u) = u +
  M_\partial \}$.}
 
There then exist $x_i \in \bar{B}_d$ and $\lambda_i \in (0,1]$, $i
=1,\dots, q$, such that \eqref{combinaison} holds true (where $U=u+
M_\partial$).  We know that there is at most one point $x_i$ on
$\partial B_{2d}$ and the others are in $B_d$; if not, $\Gamma (u)
\equiv 0$ and there is nothing to prove. Moreover, $x_i \in
\mathcal{B}$ for $i=1,\dots, q$.

By Proposition~\ref{sousjet}, for any $\eps >0$, there exist $q$
matrices $\lambda_i^{-1} A_i \ge A_\eps \ge 0$ such that $\Box_{i=1}^q
\lambda_i^{-1} A_i \ge A_\eps$ and $(p,A_i) \in \bar{D}^{2,-} u (x_i) $.

If there are no points on $\partial B_{2d}$, we deduce from Case~1
that for all $i$, $A_i \le C I$ and $A_\eps \le CI$ follows.

If $x_q \in \partial B_{2d}$, say, then we deduce from
\eqref{combinaison} that $\lambda_q \le 2/3$; hence, there exists $i
\in \{1, \dots, q-1\}$ such that $\lambda_i \ge 1/3n$. For instance
$i=1$.  Then we conclude that
$$
A_\eps \le \frac{1}{\lambda_1} A_1 \le 3n C I. 
$$ 
Passing to the limit on $\eps$, we obtain $A \le C I$ (for some new
constant $C$).
\end{proof}
\end{proof}

\subsection {Proof of Lemma~\ref{lem:wh level sets}}

In order to prove Lemma~\ref{lem:wh level sets}, we need 
the Calder\'on-Zygmund cube decomposition such as stated
in \cite{cc95}. We thus first recall it. 
We use notation from \cite{cc95}.  Given $r>0$, the cube $Q_r$ is
split in $2^n$ cubes of half side-length.  We do the same with all the
new cubes and we iterate the process. The cubes obtained in this way
are called \emph{dyadic cubes}.
If $Q$ is a dyadic cube of $Q_r$, $\tilde{Q}$ denotes a dyadic cube
such that $Q$ is one of $2^n$ cubes obtained from $\tilde{Q}$.
\begin{lem}[Cube decomposition]\label{lem:czd}
Consider $r>0$ and two measurable subsets $A \subset B \subset Q_r$.  
Consider $\delta \in (0,1)$ such that
\begin{itemize}
\item $|A| \le \delta |Q_r|$ ;
\item if $Q$ is a dyadic cube of $Q_r$ such that $|A \cap Q| > \delta |Q|$,
then $\tilde{Q} \subset B$.
\end{itemize}
Then $|A | \le \delta |B|$. 
\end{lem}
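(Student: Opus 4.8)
The plan is to run the classical Calder\'on--Zygmund stopping-time (dyadic decomposition) argument on $A$ inside $Q_r$ at threshold $\delta$. Starting from $Q_r$ and subdividing repeatedly, I declare a dyadic cube $Q$ \emph{selected} the first time, going down from $Q_r$, that $|A\cap Q|>\delta|Q|$; equivalently, the selected cubes are the maximal dyadic cubes with this property. Let $\mathcal F=\{Q_j\}_j$ be the family of selected cubes. By maximality they are pairwise disjoint and each satisfies $|A\cap Q_j|>\delta|Q_j|$. Crucially, no $Q_j$ equals $Q_r$, since the first hypothesis $|A|\le\delta|Q_r|$ prevents $Q_r$ from ever being selected; hence every $Q_j$ has a well-defined dyadic parent $\tilde Q_j$, and maximality of $Q_j$ forces $|A\cap\tilde Q_j|\le\delta|\tilde Q_j|$. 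The second hypothesis then yields $\tilde Q_j\subset B$ for all $j$.

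Next I would verify that $A\subset\bigcup_j Q_j$ up to a null set. If $x\in A$ lies in no selected cube, then every dyadic cube $Q\ni x$ has $|A\cap Q|\le\delta|Q|$ (otherwise the largest dyadic cube containing $x$ with ratio $>\delta$ would be selected); thus $x$ is not a Lebesgue density point of $A$, and the set of such $x$ is negligible by the (dyadic) Lebesgue differentiation theorem. Consequently $|A|=\sum_j|A\cap Q_j|$.

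The one point requiring a little care is that the parents $\{\tilde Q_j\}_j$ need not be disjoint: two selected cubes may share a parent, and one parent may contain another. I would therefore pass to the family $\mathcal G$ of the maximal cubes among the $\tilde Q_j$'s; being dyadic these are pairwise disjoint, each $R\in\mathcal G$ is some $\tilde Q_j$ so still satisfies $|A\cap R|\le\delta|R|$ and $R\subset B$, and $\bigcup_{R\in\mathcal G}R=\bigcup_j\tilde Q_j\supset\bigcup_j Q_j\supset A$ modulo a null set. Hence
\[
|A|=\sum_{R\in\mathcal G}|A\cap R|\le\delta\sum_{R\in\mathcal G}|R|=\delta\,\Big|\bigcup_{R\in\mathcal G}R\Big|\le\delta|B|,
\]
which is the assertion. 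The only mild obstacle is precisely this overlap issue among the parents, combined with correctly invoking dyadic density to cover $A$ by selected cubes; the rest is bookkeeping, and the argument is essentially the one in \cite{cc95}.
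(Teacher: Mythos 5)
Your argument is correct and complete: it is the classical Calder\'on--Zygmund stopping-time proof (maximal selected cubes, the dyadic Lebesgue density theorem to cover $A$, and passing to maximal parents to handle overlaps), which is exactly the proof in \cite{cc95} to which the paper defers without reproducing it. The one point you flag as delicate, the possible overlap of the parents $\tilde Q_j$, is handled correctly by extracting the maximal (hence pairwise disjoint) dyadic cubes among them, each of which still satisfies $|A\cap R|\le\delta|R|$ and $R\subset B$.
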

As far as the proof of this lemma is concerned, the reader is referred to \cite{cc95}.
We now turn to the proof of Lemma~\ref{lem:wh level sets}. 

\begin{proof}[Proof of Lemma~\ref{lem:wh level sets}]
We are going to prove the following estimate
\begin{equation}\label{eq:wh ls discrete}
|\{ u \ge (M_B)^k \} \cap Q_r | \le (1-\mu)^k |Q_r|
\end{equation}
where $M_B$ and $\mu$ are given by Lemma~\ref{lem:wh ub}.  The reader
can check that \eqref{eq:wh level sets} derives from \eqref{eq:wh ls
  discrete} with $d=(1-\mu)^{-1}$ and $\eps = - \ln (1-\mu) / \ln
M_B$.

We prove \eqref{eq:wh ls discrete} by induction. 
Lemma~\ref{lem:wh ub} implies that \eqref{eq:wh ls discrete} holds
for $k=1$. We now consider $k \ge 2$, we assume that \eqref{eq:wh ls discrete}
holds for $k-1$ and we prove it for $k$. To do so, we are going to
apply Lemma~\ref{lem:czd} with the two following sets $A \subset B \subset Q_r$
\begin{eqnarray*}
A &=& \{ u > (M_B)^k \} \cap Q_r \, , \\
B &=& \{ u > (M_B)^{k-1} \} \cap Q_r 
\end{eqnarray*}
and with $\delta = 1-\mu$. Remark that $A \subset \{ u > M_B \} \cap Q_r$;
hence $|A|\le (1-\mu) |Q_r|$. It thus remains to prove that if $Q$ is
a dyadic cube of $Q_r$ such that 
\begin{equation}\label{assum}
|A \cap Q| > (1-\mu) |Q|
\end{equation}
 then
the predecessor $\tilde{Q}$ of $Q$ satisfies $\tilde{Q} \subset B$. 
Consider such a dyadic cube $Q = Q_{\frac{r}{2^i}} (x_0)$ and suppose that 
$\tilde{Q}$ is not contained in $B$. Then there exists $\tilde{x} \in \tilde{Q}$
such that $u(\tilde{x}) \le (M_B)^{k-1}$. We now use Lemma~\ref{lem:rescaled}
with $R_0 = R$, $t_0 = \frac{1}{2^i}$ and $M_0 = (M_B)^{k-1}$ 
to get a rescaled function $u_s$ satisfying $F_s=0$ with $F_s$ such that 
\eqref{loclip2} holds with constants $M_s \le M_F$, $\gamma_s \le \gamma_F$  
and functions $f_s$, $\sigma_s$ 
satisfying $\|f_s\|_{L^n (Q_{R})} \le \|f \|_{L^n (Q_{R})}$ and 
$\|\sigma_s\|_{L^q (Q_{R})} \le \|\sigma \|_{L^q (Q_{R})}$. We thus can apply
Lemma~\ref{lem:wh ub} if $\inf_{Q_{3r}} u_s \le 1$. This is indeed the case
$$
\inf_{Q_{3r}} u_s \le \frac{u(\tilde{x})}{(M_B)^{k-1}} \le 1 \, .
$$
Hence, $|Q \setminus A| > (1-\mu) |Q|$ which contradicts \eqref{assum}. 
\end{proof}

\subsection{Proof of Corollary~\ref{cor:holder}}

\begin{proof}[Proof of Corollary~\ref{cor:holder}]
We use the notation of \cite{cc95}: for all $r \in (0,1)$, $m_r =
\inf_{Q_r} u$, $M_r = \sup_{Q_r} u$, $o_r = M_r - m_r =
\mathrm{osc}_{Q_r} u$. The non-negative functions $u-m_1$ and $M_1 -u$
satisfy equations $F_- =0$, $F^+=0$ respectively for some non-linearities
$F_-$ and $F^+$ satisfying \eqref{loclip2}, \eqref{loclip3} with $f$ 
replaced with $f+\gamma_F M_1$. Hence, we can apply the Harnack
inequality two $M_1 -u$ and $u-m_1$ and get
\begin{eqnarray*}
  M_{1/2}-m_1 \le C ( m_{1/2} -m_1 + \max(M_F, \|f\|_{L^n (Q_1)}+ \gamma_F |m_1|)) \, , \\
  M_1-m_{1/2} \le C ( M_1 -M_{1/2} + \max(M_F, \|f\|_{L^n (Q_1)}+ \gamma_F |M_1|)) 
\end{eqnarray*}
where we can assume without loss of generality that $C>1$.  Adding
these two inequalities and rearranging terms, we obtain
$$
\mathrm{osc}_{Q_{1/2}} u \le \frac{C-1}{C+1} \mathrm{osc}_{Q_1} u + 2
\max(M_F, \|f\|_{L^n (Q_1)}+ \gamma_F \|u\|_{L^\infty (Q_1)}) \, .
$$
We now use Lemma~8.3 in \cite{gt} in order to get \eqref{eq:holder}.
\end{proof}

\bibliographystyle{siam}

\begin{thebibliography}{10}

\bibitem{all97}
{\sc O.~Alvarez, J.-M. Lasry, and P.-L. Lions}, {\em Convex viscosity solutions
  and state constraints}, J. Math. Pures Appl. (9), 76 (1997), pp.~265--288.

\bibitem{bd2}
{\sc I.~Birindelli and F.~Demengel}, {\em Comparison principle and {L}iouville type results for singular
              fully nonlinear operators}, Ann. Fac. Sci. Toulouse Math., 13 (2004), pp.~261--287.
\bibitem{bd}
{\sc I.~Birindelli and F.~Demengel}, {\em Eigenfunctions for singular fully non linear
equations in unbounded regular domain}, arXiv:0904.1720

\bibitem{ccks}
{\sc L.~Caffarelli, M.~G. Crandall, M.~Kocan, and A.~Swi{\polhk{e}}ch}, {\em On
  viscosity solutions of fully nonlinear equations with measurable
  ingredients}, Comm. Pure Appl. Math., 49 (1996), pp.~365--397.

\bibitem{caff89}
{\sc L.~A. Caffarelli}, {\em Interior a priori estimates for solutions of fully
  nonlinear equations}, Ann. of Math. (2), 130 (1989), pp.~189--213.

\bibitem{cc95}
{\sc L.~A. Caffarelli and X.~Cabr{\'e}}, {\em Fully nonlinear elliptic
  equations}, vol.~43 of American Mathematical Society Colloquium Publications,
  American Mathematical Society, Providence, RI, 1995.

\bibitem{clv05}
{\sc I.~Capuzzo-Dolcetta, F.~Leoni, and A.~Vitolo}, {\em The
  {A}lexandrov-{B}akelman-{P}ucci weak maximum principle for fully nonlinear
  equations in unbounded domains}, Comm. Partial Differential Equations, 30
  (2005), pp.~1863--1881.

\bibitem{cil92}
{\sc M.~G. Crandall, H.~Ishii, and P.-L. Lions}, {\em User's guide to viscosity
  solutions of second order partial differential equations}, Bull. Amer. Math.
  Soc. (N.S.), 27 (1992), pp.~1--67.

\bibitem{dfq-sing}
{\sc G. D\'avila, P. Felmer and A. Quaas}, {\em Harnack inequality for singular
fully nonlinear operators and some existence results}, to appear in Calc. Var. and PDEs.

\bibitem{dfq-cras}
{\sc G. D\'avila, P. Felmer and A. Quaas}, 
{\em Alexandroff-Bakelman-Pucci estimate for singular or degenerate
             fully nonlinear elliptic equations}, C.R.A.S.,  347 (2009), 19-20, pp.~1165-1168.

\bibitem{degiorgi}
{\sc E.~De~Giorgi}, {\em Sulla differenziabilit\`a e l'analiticit\`a delle
  estremali degli integrali multipli regolari}, Mem. Accad. Sci. Torino. Cl.
  Sci. Fis. Mat. Nat. (3), 3 (1957), pp.~25--43.

\bibitem{delarue}
{\sc F.~Delarue}, {\em Krylov and {S}afonov estimates for degenerate
  quasilinear elliptic {PDE}s}, Journal Diff. Equations, 248 (2010), 4, pp.~924-951.

\bibitem{federer}
{\sc H.~Federer}, {\em Geometric measure theory}, Die Grundlehren der
  mathematischen Wissenschaften, Band 153, Springer-Verlag New York Inc., New
  York, 1969.

\bibitem{fok}
{\sc K.~Fok}, {\em A nonlinear {F}abes-{S}troock result}, Comm. Partial
  Differential Equations, 23 (1998), pp.~967--983.

\bibitem{fokthesis}
{\sc P.~Fok}, {\em Some maximum principles and continuity estimates for fully
  nonlinear elliptic equations of second order}, PhD thesis, University of
  California, Santa Barbara, 1996.

\bibitem{gt}
{\sc D.~Gilbarg and N.~S. Trudinger}, {\em Elliptic partial differential
  equations of second order}, vol.~224 of Grundlehren der Mathematischen
  Wissenschaften [Fundamental Principles of Mathematical Sciences],
  Springer-Verlag, Berlin, second~ed., 1983.

\bibitem{convexe}
{\sc C.~Imbert}, {\em Convexity of solutions and {$C\sp {1,1}$} estimates for
  fully nonlinear elliptic equations}, J. Math. Pures Appl. (9), 85 (2006),
  pp.~791--807.

\bibitem{il90}
{\sc H.~Ishii and P.-L. Lions}, {\em Viscosity solutions of fully nonlinear
  second-order elliptic partial differential equations}, J. Differential
  Equations, 83 (1990), pp.~26--78.

\bibitem{ksbis}
{\sc S.~Koike and A.~{\'S}wi{\c{e}}ch}, {\em Maximum principle and existence of
  {$L\sp p$}-viscosity solutions for fully nonlinear uniformly elliptic
  equations with measurable and quadratic terms}, NoDEA Nonlinear Differential
  Equations Appl., 11 (2004), pp.~491--509.

\bibitem{ks}
{\sc S.~Koike and A.~{\'S}wi{\polhk{e}}ch}, {\em Maximum principle for fully
  nonlinear equations via the iterated comparison function method}, Math. Ann.,
  339 (2007), pp.~461--484.

\bibitem{kt}
{\sc S. Koike and T. Takahashi}, {\em Remarks on regularity of viscosity solutions for fully
              nonlinear uniformly elliptic {PDE}s with measurable
              ingredients }, Adv. Differential Equations, 7 (2002), 4, pp.~493--512


\bibitem{ks1}
{\sc N.~V. Krylov and M.~V. Safonov}, {\em An estimate for the probability of a
  diffusion process hitting a set of positive measure}, Dokl. Akad. Nauk SSSR,
  245 (1979), pp.~18--20.

\bibitem{ks2}
\leavevmode\vrule height 2pt depth -1.6pt width 23pt, {\em A property of the
  solutions of parabolic equations with measurable coefficients}, Izv. Akad.
  Nauk SSSR Ser. Mat., 44 (1980), pp.~161--175, 239.

\bibitem{lsu}
{\sc O.~A. Lady{\v{z}}enskaja, V.~A. Solonnikov, and N.~N. Ural{\cprime}ceva},
  {\em Linear and quasilinear equations of parabolic type}, Translated from the
  Russian by S. Smith. Translations of Mathematical Monographs, Vol. 23,
  American Mathematical Society, Providence, R.I., 1967.

\bibitem{nash}
{\sc J.~Nash}, {\em Continuity of solutions of parabolic and elliptic
  equations}, Amer. J. Math., 80 (1958), pp.~931--954.

 \bibitem{qs}
 {\sc A. Quaas and B. Sirakov},
  {\em Principal eigenvalues and the {D}irichlet problem for fully
               nonlinear elliptic operators}, Adv. Math., 218 (2008), pp.~105--135.

\bibitem{serrin}
{\sc J.~Serrin}, {\em Local behavior of solutions of quasi-linear equations},
  Acta Math., 111 (1964), pp.~247--302.

\bibitem{trudinger2}
{\sc N.~S. Trudinger}, {\em Local estimates for subsolutions and supersolutions
  of general second order elliptic quasilinear equations}, Invent. Math., 61
  (1980), pp.~67--79.

\bibitem{trudinger}
\leavevmode\vrule height 2pt depth -1.6pt width 23pt, {\em Comparison
  principles and pointwise estimates for viscosity solutions of nonlinear
  elliptic equations}, Rev. Mat. Iberoamericana, 4 (1988), pp.~453--468.

\end{thebibliography}

\def\cprime{$'$} \def\polhk#1{\setbox0=\hbox{#1}{\ooalign{\hidewidth
  \lower1.5ex\hbox{`}\hidewidth\crcr\unhbox0}}}

\noindent \textsc{Cyril Imbert} \medskip\\
\textsc{Universit\'e Paris-Dauphine} \\
\textsc{ CEREMADE, UMR CNRS 7534 }\\
 \textsc{   place de Lattre de Tassigny} \\
\textsc{ 75775 Paris cedex 16, France} \\
\texttt{imbert@ceremade.dauphine.fr}

\end{document}